\theoremstyle{plain}
\newtheorem{theorem}{Theorem}[section]
\newtheorem{lemma}[theorem]{Lemma}
\newtheorem{cor}[theorem]{Corollary}
\newtheorem{prop}[theorem]{Proposition}
\newtheorem{fact}{Fact}
\newtheorem{obs}[theorem]{Observation}
\theoremstyle{definition}
\newtheorem{definition}[theorem]{Definition}
\newtheorem{example}[theorem]{Example}
\theoremstyle{definition}
\newcommand{\upto}{\upharpoonright}
\newcommand{\fr}{\mbox{}^\smallfrown}
\newcommand{\om}{\omega}
\newcommand{\ep}{\varepsilon}
\newcommand{\A}{\mathcal{A}}
\newcommand{\B}{\mathcal{B}}
\newcommand{\F}{\mathcal{F}}
\newcommand{\Q}{\mathcal{Q}}
\newcommand{\si}{\sigma}
\newcommand{\three}{{\mathbf{2}_\bot}}
\newcommand{\pthree}{{\diamondsuit}}
\newcommand{\bfm}{\mathbf{m}}
\newcommand{\De}[1]{{\mathbf{\Delta}^0_{#1}}}
\newcommand{\Si}[1]{{\mathbf{\Sigma}^0_{#1}}}
\newcommand{\PP}[1]{{\mathbf{\Pi}^0_{#1}}}
\newcommand{\sep}[1]{\mathrm{Lev}_{#1}}
\newcommand{\cone}{\triangledown}
\newcommand{\pcsW}{^{\widehat{c}}_{sW}}
\title[Topological reducibilities]{Topological reducibilities for discontinuous functions and their structures}
\author{Takayuki Kihara}
\address[Takayuki Kihara]{Graduate School of Informatics, Nagoya University, Japan}
\email{kihara@i.nagoya-u.ac.jp}
\date{}
\begin{document}
\maketitle

\begin{abstract}
In this article, we give a full description of a topological many-one degree structure of real-valued functions, recently introduced by Day-Downey-Westrick.
We also point out that their characterization of the Bourgain rank of a Baire-one function of compact Polish domain can be extended to noncompact Polish domain.
Finally, we clarify the relationship between the Martin conjecture and Day-Downey-Westrick's topological Turing-like reducibility, also known as parallelized continuous strong Weihrauch reducibility, for single-valued functions:
Under the axiom of determinacy, we show that the continuous Weihrauch degrees of parallelizable single-valued functions are well-ordered; and moreover,
if $f$ is has continuous Weihrauch rank $\alpha$, then $f'$ has continuous Weihrauch rank $\alpha+1$, where $f'(x)$ is defined as the Turing jump of $f(x)$.
\end{abstract}

\section{Introduction}

\subsection{Summary}

The notion of Wadge degrees provides us an ultimate measure to analyze the topological complexity of subsets of a zero-dimensional Polish space (see \cite{And07,AnLo12}).
Under the axiom of determinacy, the induced structure forms a semi-well-order of the height $\Theta$, and thus it enables us to assign an ordinal rank to each subset of such a space.
Our main question is whether one can introduce a similar ultimate measure which induces a semi-well-ordering of {\em real-valued functions} on a Polish space.
A somewhat related question is also proposed by Carroy \cite{Carroy13}.

Recently, Day-Downey-Westrick \cite{DDW} introduced a ``many-one''-like ordering $\leq_\mathbf{m}$ on real-valued functions on Cantor space.
Their ordering $\leq_\mathbf{m}$ measures the topological complexity of sets separating the lower level sets from the upper level sets of a function.
One of our main results in this article is to show that their notion $\leq_\mathbf{m}$ behaves like a Wadge ordering, and in particular, it semi-well-orders real-valued functions.

\begin{definition}[Day-Downey-Westrick \cite{DDW}]\label{def:DDW-reducibi}
For $f,g\colon 2^\om\to\mathbb{R}$, we say that $f$ is $\bfm$-reducible to $g$ (written $f\leq_\bfm g$) if for any rationals $p$ and $\ep>0$, there are rationals $r$ and $\delta>0$ and a continuous function $\theta:2^\om\to 2^\om$ such that, for any $x\in 2^\om$, $g(\theta(x))<r+\delta$ implies $f(x)<p+\ep$, and $g(\theta(x))>r-\delta$ implies $f(x)>p-\ep$.
\end{definition}

One of Day-Downey-Westrick's main discoveries is the connection between their notion of the $\bfm$-degree and the Bourgain rank (also known as the separation rank \cite{KeLo90}) of a Baire-one function.
The latter notion is introduced by Bourgain \cite{Bo80} to prove a refinement of the Odell-Rosenthal theorem in Banach space theory: The $\ell^1$-index of a separable Banach space is related to the degrees of discontinuity (the Bourgain rank) of double-dual elements as Baire-one functions.
Day-Downey-Westrick \cite{DDW} showed the following:
\begin{itemize}
\item The Bourgain rank $1$ consists of exactly two $\bfm$-degrees, those of constants and continuous functions.
\item Every successor Bourgain rank $\geq 2$ consists of exactly four $\bfm$-degrees, where the first two $\bfm$-degrees are incomparable, and the others are comparable.
For instance, the first two $\bfm$-degrees of Bourgain rank $2$ are those of lower semicontinuous and upper semicontinuous functions.
\item Every infinite limit ordinal rank consists of exactly one $\bfm$-degree.
\end{itemize}

Their result completely characterizes the structure of the $\bfm$-degrees of the Baire-one functions, that is, the $\bfm$-degrees of rank below $\om_1$.
In this article, we will give a full description of the structure of the $\bfm$-degrees of all real-valued functions under the axiom of determinacy ${\sf AD}$ (or all Baire-class functions under ${\sf ZFC}$).

\begin{theorem}[{\sf AD}]\label{thm:mthm1}
The $\bfm$-degrees of real-valued functions on $2^\om$ form a semi-well-order of length $\Theta$, where $\Theta$ is the least nonzero ordinal $\alpha$ such that there is no surjection from the reals onto $\alpha$.

For a limit ordinal $\alpha<\Theta$ and finite $n<\omega$, the $\bfm$-rank $\alpha+3n+c_\alpha$ consists of two incomparable degrees, and each of the other ranks consists of a single degree, where $c_\alpha=2$ if $\alpha=0$;  $c_\alpha=1$ if the cofinality of $\alpha$ is $\omega$; and $c_\alpha=0$ if the cofinality of $\alpha$ is uncountable.
\end{theorem}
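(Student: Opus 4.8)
The plan is to recast $\leq_\bfm$ as a Wadge-style reducibility between $\three$-valued functions and then invoke the determinacy-based theory of such hierarchies. Given $f\colon 2^\om\to\mathbb{R}$ and a threshold pair $(p,\ep)$, define $F_{p,\ep}\colon 2^\om\to\three$ by $F_{p,\ep}(x)=0$ if $f(x)<p-\ep$, by $F_{p,\ep}(x)=1$ if $f(x)>p+\ep$, and by $F_{p,\ep}(x)=\bot$ otherwise. Unwinding Definition~\ref{def:DDW-reducibi}, the two implications say exactly that when $g(\theta(x))$ is low (resp.\ high, resp.\ strictly between $r-\delta$ and $r+\delta$) the value $f(x)$ is forced to be low-or-boundary (resp.\ high-or-boundary, resp.\ boundary); that is, $F_{p,\ep}(x)\le G_{r,\delta}(\theta(x))$ in the order of $\three$ for all $x$. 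So the first step is the reduction lemma: $f\leq_\bfm g$ iff for every threshold $(p,\ep)$ of $f$ there is a threshold $(r,\delta)$ of $g$ and a continuous $\theta$ with $F_{p,\ep}(x)\le G_{r,\delta}(\theta(x))$ for all $x$, i.e.\ $F_{p,\ep}\leq_\three G_{r,\delta}$ in the $Q$-Wadge reducibility for $Q=\three$ (reduction by $\theta$ with respect to the pointwise order of $\three$). Consequently the $\bfm$-degree of $f$ is captured by the supremum over rational thresholds of the $\three$-Wadge degrees of the $F_{p,\ep}$, and the dual of $[f]$ corresponds to negation $f\mapsto -f$, which swaps the two maximal points $0\leftrightarrow1$ of $\three$.

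Next I would establish that the $\three$-Wadge degrees form a semi-well-order of length $\Theta$ under ${\sf AD}$. Semilinear ordering follows from determinacy of the associated Wadge game in which the two players build $x$ and $y$ and the payoff compares $F(x)$ with $G(y)$ in $\three$: a winning strategy for one player yields a continuous $\theta$ witnessing $F\leq_\three G$, and for the other a witness of $\bar G\leq_\three F$, where $\bar{\cdot}$ is the $0\leftrightarrow1$ swap. Well-foundedness is a Martin--Monk style argument: an infinite $\leq_\three$-descending sequence, run through a suitable coding and the Wadge games, would produce a flip set and contradict ${\sf AD}$. Transferring along the reduction lemma, I then verify that the ``$\forall$ threshold $\exists$ threshold'' ordering on the countable families $\{F_{p,\ep}\}$ is precisely comparison of suprema of $\three$-Wadge degrees, that every $\three$-Wadge degree arises as such a supremum (by constructing an $f$ whose threshold complexities are cofinal in it), and hence that $\leq_\bfm$ is order-isomorphic to the $\three$-Wadge hierarchy; this yields the semi-well-order of length $\Theta$.

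For the fine structure I would compute the alternation of self-dual and non-self-dual $\three$-Wadge degrees, self-duality being invariance under the $0\leftrightarrow1$ swap. The bottom is the single self-dual degree of $F\equiv\bot$ (constants); immediately above it is the single self-dual degree of a clopen separation (continuous functions); and the first non-self-dual pair is the one-sided open separation on the $1$-side versus its swap on the $0$-side (the lower- vs.\ upper-semicontinuous functions), which recovers the $c_0=2$ offset with the first pair at rank $2$. The claim to verify is that, unlike the period-$2$ pattern of the classical ($\mathbf{2}$-valued) Wadge hierarchy, the extra point $\bot$ inserts one additional self-dual degree per level, producing the recurring period-$3$ block $[\text{pair},\text{single},\text{single}]$; combined with the standard cofinality dichotomy at limits---self-dual (hence a single degree) at countable cofinality, non-self-dual (hence a pair) at uncountable cofinality---this places the pairs exactly at ranks $\alpha+3n+c_\alpha$. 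I expect this computation to be the main obstacle: one must pin down the two self-dual degrees inside each block, confirm that the block length is exactly three at every successor level, and then carry out the limit bookkeeping so that $c_\alpha$ comes out as $1$ for cofinality $\om$ and $0$ for uncountable cofinality. The Day--Downey--Westrick analysis of the Baire-one (rank $<\om_1$) case serves as the template for a single block, and the determinacy-based $\three$-Wadge machinery lets the same combinatorics run uniformly through all ranks below $\Theta$.
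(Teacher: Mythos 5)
Your first step---recasting $\leq_\bfm$ as threshold-wise reducibility between $\three$-valued functions---matches the paper's setup (its $\sep{f}$ and the observation $f\leq_\bfm g\iff\sep{f}\leq_{mw}\sep{g}$). But the core of your argument then goes wrong. The $\bfm$-degrees are \emph{not} order-isomorphic to the $\three$-Wadge hierarchy, and they are not determined by suprema of threshold degrees. The $\three$-Wadge hierarchy contains \emph{proper} degrees, i.e., degrees of $\three$-valued functions not Wadge-equivalent to any $\mathbf{2}$-valued function: for instance $f_{A,B}$ where $A,B$ are disjoint sets in the class without the separation property that admit no $\Delta_\alpha$ separator. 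The crux of the paper is that \emph{none} of these proper degrees is realized as $\sep{f}$ for a real-valued $f$, so the structure that actually arises is that of the $m$-Wadge degrees of nonempty proper \emph{subsets} of $\om^\om$ (Theorem \ref{thm:main-theorem1}). This is exactly where the van Wesep--Steel theorem (Fact \ref{fact:vanWesep-Steel}) enters: given thresholds $p<q$, one inserts intermediate thresholds $p<r<s<q$, uses the $\Sigma_\alpha^\pthree$ and $\Pi_\alpha^\pthree$ bounds on the slices $\langle p,r\rangle$ and $\langle s,q\rangle$ to trap $\{f\leq p\}$ and $\{f\geq q\}$ inside disjoint $\Pi_\alpha$ sets, and then applies the separation property of $\Pi_\alpha$ to produce a $\Delta_\alpha$ separator, so $\sep{f}\upto\langle p,q\rangle\in\Delta_\alpha^\pthree$. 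This argument is special to level functions of real-valued functions (one can always insert more thresholds) and it makes the proper degrees disappear. Your proposal asserts the opposite---that every $\three$-Wadge degree is realized---and attributes the period-3 pattern to extra degrees inserted by $\bot$, which is precisely backwards.

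Second, reducing the comparison to ``suprema of the $\three$-Wadge degrees of the $F_{p,\ep}$'' erases the structure that actually produces the period-3 blocks. The ordering is slice-wise ($\leq_{mw}$: each threshold of $f$ must reduce to \emph{some} threshold of $g$), and two functions whose threshold families have the same supremum need not be $\bfm$-equivalent; this is the paper's distinction between $m$-$\sigma$-join-reducible and $m$-$\sigma$-join-irreducible functions (Lemmas \ref{lem:m-deg-sigma-ji} and \ref{lem:m-deg-delta-alpha}), under which each self-dual Wadge degree splits into \emph{two} linearly ordered $m$-Wadge degrees. That splitting, not insertion of proper degrees, is the mechanism behind the two singletons in each block, so your supremum reduction is inconsistent with the fine structure you claim afterwards. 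Finally, you never use compactness of $2^\om$, yet the value $c_\alpha=1$ at limits of cofinality $\om$ depends on it: on a compact domain the $m$-$\sigma$-join-irreducible $\Delta_\alpha$-complete degree cannot be realized at such limits (Lemma \ref{lem:non-proper-Wadge-for-reals}(5), a compactness-of-the-image argument), which deletes one degree from the block and moves the incomparable pairs from rank $\alpha+3n+2$ down to $\alpha+3n+1$. Without that lemma, and without the realization constructions of Lemma \ref{lem:non-proper-Wadge-for-reals}(1)--(4) showing which degrees \emph{are} attained by functions on $2^\om$, your limit bookkeeping does not yield the stated $c_\alpha$.
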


So far, we have only mentioned functions of Cantor domain.
Now we would like to extend our results to more general domains.
The difficulty arises here by the fact that the structure of Wadge degrees of subsets of a nonzero-dimensional Polish space is ill-behaved (cf.~Ikegami et al.\ \cite{ITS16} and Schlicht \cite{Schl17}).

Fortunately, Pequignot \cite{Peq15} has overcome this difficulty by modifying the definition of Wadge reducibility using the theory of an admissible representation, and then, showed that the modified Wadge degree structure of subsets of a second-countable space is semi-well-ordered.
Day-Downey-Westrick \cite{DDW} adopted a similar idea to consider the notion of $\mathbf{m}$-reducibility for functions of compact metrizable domain.

By integrating their ideas we introduce the notion of $\mathbf{m}$-reducibility for real-valued functions of (quasi-)Polish domain as follows.
Let $\delta$ be a total open admissible representation of a Polish space $\mathcal{X}$ (see Lemma \ref{lem:total-open-admissible}).
Then, we introduce the $\bfm$-degree of a function $f\colon\mathcal{X}\to\mathbb{R}$ as that of $f\circ\delta\colon\om^\om\to\mathcal{X}$.
As in Pequignot \cite{Peq15}, this notion is easily seen to be well-defined (see Section \ref{sec:Bourgain-rank}).
Then we will conclude that the $\bfm$-degrees of real-valued functions on Polish spaces form a semi-well-order of length $\Theta$ (Observation \ref{obs:semi-well-order-m-deg}).

Pequignot's insightful idea also turns out to be very useful for the Wadge-like analysis of the Bourgain rank.
We discuss the Bourgain rank in non-compact spaces (which is also considered by Elekes-Kiss-Vidny\'anszky \cite{EKV16} via the change of topology, in order to generalize the notion of ranks to Baire class $\xi$ functions, and then to study a cardinal invariant associated with systems of difference equations).
Then, based on the notion of sidedness conditions introduced by Day-Downey-Westrick \cite{DDW}, we can classify real-valued functions on a (possibly non-compact) Polish space into (ordered) $5$ types (see Definition \ref{def:DDW-sided-conditions}).
We then generalize the main result in \cite{DDW} to arbitrary Polish domains as follows:
Let $\mathcal{X}$ and $\mathcal{Y}$ be Polish spaces and let $f\colon\mathcal{X}\to\mathbb{R}$ and $g\colon\mathcal{Y}\to\mathbb{R}$ be Baire-one functions.
Then, $f\leq_\bfm g$ if and only if either $\alpha(f)<\alpha(g)$ holds or both $\alpha(f)=\alpha(g)$ and ${\rm type}(f)\leq{\rm type}(g)$ hold.
We also give the precise connection between the Bourgain rank and the Wadge rank.

Finally, we will clarify the relationship between the uniform Martin conjecture and Day-Downey-Westrick's $\mathbf{T}$-degrees of real-valued functions.
They defined $\mathbf{T}$-reducibility for real-valued functions as parallelized continuous strong (p.c.s.)~Weihrauch reducibility, that is, {\em $f$ is $\mathbf{T}$-reducible to $g$} if there are continuous functions $H,K$ such that $f=K\circ\widehat{g}\circ H$, where $\widehat{g}$ is the parallelization of $g$ (see Section \ref{sec:reducibility-notions}).

The Martin conjecture is one of the most prominent open problems in computability theory (see \cite{MarSlaSte}), which generalize Sacks' question on a natural solution to Post's problem.
The notion of $\mathbf{T}$-degree (p.c.s.~Weihrauch degree) is seemingly unrelated to this conjecture; nevertheless we clarify the hidden relationship between them.
We show that the p.c.s.~Weihrauch degrees are exactly the {\em natural} Turing degrees in the context of the uniform Martin conjecture.
More precisely, we will see that the p.c.s.~Weihrauch degrees of real-valued functions is isomorphic to the Turing-degrees-on-a-cone of the uniformly Turing degree invariant operators.
Indeed, the identity map induces an isomorphism between the Turing-ordering-on-a-cone of the uniformly $\leq_T$-preserving operators and the p.c.s.~Weihrauch degrees of real-valued functions.
Therefore, by Steel's theorem \cite{Steel82}, we finally conclude the following.

\begin{theorem}[{\sf AD}]
The p.c.s.~Weihrauch degrees of single-valued functions are well-ordered, whose order type is $\Theta$.
If $f$ is parallelizable (see Section \ref{sec:Martin-conjecture}), and has p.c.s.~Weihrauch rank $\alpha>0$, then $f'$ is also parallelizable, and has p.c.s.~Weihrauch rank $\alpha+1$, where $f'(x)$ is defined as the Turing jump of $f(x)$.
\end{theorem}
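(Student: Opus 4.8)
The plan is to obtain the theorem by reading Steel's classification of jump operators \cite{Steel82} through the isomorphism established above. Recall that the identity map was shown to induce an order isomorphism between the p.c.s.~Weihrauch degrees of single-valued functions and the uniformly $\leq_T$-preserving (degree-invariant) operators ordered by the cone ordering (i.e.\ $F\leq G$ iff $F(x)\leq_T G(x)$ on a cone). Under this isomorphism a parallelizable function $f$ corresponds to a genuine operator $F$ with $F(x)\equiv_T f(x)$ on a pointed cone, and the two orderings agree. Consequently every structural feature of the operator side transports verbatim to the degree side, and it remains only to quote Steel's results in the correct form.

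For the well-ordering, Steel's theorem asserts, under $\mathsf{AD}$, that the uniformly $\leq_T$-preserving operators are pre-well-ordered by the cone ordering with the Turing jump as immediate successor; transporting this pre-well-ordering along the isomorphism shows at once that the p.c.s.~Weihrauch degrees are well-ordered. To identify the order type I would set up a rank-preserving correspondence with the Wadge degrees: each Wadge degree induces a natural relativized operator, transfinite iterates of the jump climb cofinally, and conversely the complexity of an operator is controlled by a Wadge rank. Since the Wadge hierarchy has length $\Theta$ under $\mathsf{AD}$ -- the same length that appears for the $\bfm$-degrees in Theorem \ref{thm:mthm1} -- the order type is exactly $\Theta$.

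For the jump clause, assume $f$ is parallelizable with rank $\alpha>0$, so $f$ lies in the range of the isomorphism and induces a uniformly $\leq_T$-preserving operator $F$. The crucial computation is that the Turing jump commutes with parallelization up to $\equiv_T$: for every sequence $\lrangle{a_n}$ one has $\big(\bigoplus_n a_n\big)'\equiv_T\bigoplus_n a_n'$, uniformly. From this it follows that under the isomorphism $f'$ is carried exactly to the jump operator $F'\colon x\mapsto(F(x))'$. Since the Turing jump is itself uniformly degree-invariant, $F'$ is again uniformly $\leq_T$-preserving and hence lies in the range of the isomorphism, which shows that $f'$ is parallelizable. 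Finally, Steel's theorem states that the jump is the immediate successor of the cone well-ordering, so $F'$ has rank $\alpha+1$; transporting back, $f'$ has p.c.s.~Weihrauch rank $\alpha+1$.

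The main obstacle I anticipate is the order-type computation rather than the jump clause. Establishing that Steel's well-ordering has length exactly $\Theta$ forces one to organize the transfinite iteration of the jump coherently and to match it, rank by rank, against the Wadge hierarchy, which is precisely where the earlier length-$\Theta$ analysis must be reused. By comparison, the verification that the Turing jump commutes with parallelization and that the isomorphism intertwines the two jump operations is routine, if unavoidable, degree-theoretic bookkeeping.
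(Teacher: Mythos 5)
Your treatment of the well-ordering clause follows the paper's own route (quote the Steel--Becker theorem that the $\leq^\cone_\mathbf{T}$-degrees of UOP operators form a well-order of height $\Theta$ with the Turing jump as successor, then transport along the isomorphism of Theorem \ref{thm:pcsW-equal-Martin}), and that part is essentially fine, modulo the minor point that the relevant ordering is $\leq^\cone_\mathbf{T}$ (with the oracle $C$ adjoined on the right) rather than the plain cone ordering.

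The jump clause, however, contains a genuine gap, and it sits exactly at the step you dismiss as ``routine degree-theoretic bookkeeping.'' Your crucial computation, that $\bigl(\bigoplus_n a_n\bigr)'\equiv_T\bigoplus_n a_n'$ uniformly, is false: only $\bigoplus_n a_n'\leq_T\bigl(\bigoplus_n a_n\bigr)'$ holds uniformly. For a counterexample to the other direction, let $A=\emptyset''$ and $a_n=A\cap\{0,\dots,n\}$; then $\bigoplus_n a_n\equiv_TA$, so $\bigl(\bigoplus_n a_n\bigr)'\equiv_T\emptyset'''$, whereas each $a_n$ is a finite set whose canonical index is computable from $A$, so $\bigoplus_n a_n'\leq_T\emptyset''$. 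Since jump and parallelization do not commute, your inference that ``under the isomorphism $f'$ is carried exactly to the jump operator $F'$'' is unsupported, and with it both the parallelizability of $f'$ and the rank computation. This missing step is precisely the nontrivial content of the paper's argument: p.c.s.\ Weihrauch reductions are merely continuous, so to push a reduction $f=k\circ\widehat{g}\circ h$ through the Turing jump one must first absorb the oracle $C$ that computes the outer reduction $k$ into the $\widehat{g}$-side \emph{before} jumping. The paper does this by interleaving the inputs $h_n(X)$ with a fixed sequence $\langle Z_n^C\rangle$ chosen, using non-constancy of $g$ as in Lemma \ref{lem:remove-strong} (whose outer decoding is computable, not merely continuous), so that $\widehat{g}$ applied to the interleaved sequence computes $\widehat{g}(h(X))\oplus C$ via a fixed index; only then is the jump applied, with UOP-ness of the jump keeping all indices uniform. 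This yields the Claim that $f\leq\pcsW g$ implies $f'\leq_{sW}^c\widehat{g}'$, from which the theorem follows; the true direction $\bigoplus_n a_n'\leq_T\bigl(\bigoplus_n a_n\bigr)'$ is used only for $\widehat{(f')}\leq_{sW}^c\widehat{f}'$. A secondary inaccuracy: the isomorphism does not furnish a UOP operator $F$ with $F(x)\equiv_Tf(x)$ on a pointed cone; it furnishes only an $F$ in the same p.c.s.\ Weihrauch degree as $f$, which is all your argument is entitled to use.
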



\subsection{Conventions and notations}

In Sections \ref{sec:1-3}, \ref{sec:1-4}, \ref{sec:2}, \ref{sec:3} and \ref{sec:Martin-conjecture}, we assume ${\sf ZF}+{\sf DC}+{\sf AD}$ (where ${\sf DC}$ stands for the axiom of dependent choice).
Hence, our results hold in $L(\mathbb{R})$ under ${\sf ZFC}$ plus a large cardinal assumption.
As usual, if we restrict our attention to Borel sets and Baire class functions, every result presented in this article is provable within ${\sf ZFC}$.
If we restrict our attention to projective sets and functions, every result presented in this article is provable within ${\sf ZF}+{\sf DC}+{\sf PD}$ (where ${\sf PD}$ stands for the axiom of projective determinacy).

We assume that $2^\om$ is always embedded into $\mathbb{R}$ as a Cantor set.
For finite strings $\sigma,\tau\in\om^{<\om}$, we write $\sigma\prec\tau$ if $\tau$ extends $\sigma$.
Similarly, for $X\in\om^\om$ we write $\sigma\prec X$ if $X$ extends $\sigma$.
For a string $\sigma$, $[\sigma]$ denotes the set of all $X\in\om^\om$ extending $\sigma$, i.e., $\sigma\prec X$.
Let $X\upto n$ be the initial segment of length $n$.
Let $\sigma\fr\tau$ denote the concatenation of $\sigma$ and $\tau$.

\subsection{The structure of Wadge degrees}\label{sec:1-3}

We here review classical results in the Wadge degree theory \cite{Wadge83,AnLo12}.
For sets $A,B\subseteq\om^\om$, we say that $A$ is Wadge reducible to $B$ (written $A\leq_wB$) if there exists a continuous function $\theta\colon\om^\om\to\om^\om$ such that $A=B\circ\theta$, where we often identify a set with its characteristic function.

Given a pointclass $\Gamma$ (of subsets of $\om^\om$), let $\check{\Gamma}$ denote its dual, that is, $\check{\Gamma}=\{\om^\om\setminus A:A\in\Gamma\}$, and define $\Delta=\Gamma\cap\check{\Gamma}$.
A pointclass $\Gamma$ has the {\em separation property} if
\[(\forall A,B\in\Gamma)\;[A\cap B=\emptyset\;\Longrightarrow\;(\exists C\in\Delta)\;A\subseteq C\mbox{ and }B\cap C=\emptyset].\]
The separation property will play a key role in the proof of our main theorem.

A pointclass $\Gamma$ is {\em self-dual} if $\Gamma=\check{\Gamma}$.
We say that $A\subseteq\om^\om$ is {\em self-dual} if there is a continuous function $\theta\colon \om^\om\to\om^\om$ such that $A(X)\not=A\circ\theta(X)$ for any $X\in\om^\om$.
It is equivalent to saying that $A\leq_w\neg A$.
Note that $A$ is self-dual if and only if the pointclass $\Gamma_A=\{B:B\leq_wA\}$ is self-dual.

By Wadge \cite{Wadge83} and Martin-Monk, non-self-dual pairs are well-ordered, say $(\Gamma_\alpha,\check{\Gamma}_\alpha)_{\alpha<\Theta}$, where $\Theta$ is the height of the Wadge degrees.
We will use the following beautiful fact to show our Main Theorem \ref{thm:mthm1}.

\begin{fact}[Van Wesep \cite{VW78} and Steel \cite{St81}]\label{fact:vanWesep-Steel}
Exactly one of $\Gamma_\alpha$ or $\check{\Gamma}_\alpha$ has the separation property.
\end{fact}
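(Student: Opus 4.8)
The plan is to reformulate the separation property through inseparable pairs and then prove the two halves of the dichotomy separately. Call disjoint $A_0,A_1\in\Gamma$ a \emph{$\Gamma$-inseparable pair} if no $C\in\Delta$ satisfies $A_0\subseteq C$ and $A_1\cap C=\emptyset$; then $\Gamma$ has the separation property exactly when no $\Gamma$-inseparable pair exists. Under this reformulation the Fact becomes: for a non-self-dual $\Gamma=\Gamma_\alpha$, an inseparable pair exists on at least one of the two sides (so at most one separates) and on at most one side (so at least one separates). Throughout I would use that under ${\sf AD}$ each non-self-dual Wadge class has a universal set, and that Wadge's Lemma, the semi-linear ordering principle, and the wellfoundedness of the Wadge hierarchy (Martin--Monk) are available.

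For the first half I would rule out that $\Gamma$ and $\check{\Gamma}$ both separate. Assuming they do, I fix a $\Gamma$-universal $U\subseteq\om^\om\times\om^\om$ (so $\neg U$ is $\check{\Gamma}$-universal) and diagonalize against all potential $\Delta$-separators, coded by pairs of $\Gamma$- and $\check{\Gamma}$-codes. This is the standard technique behind the classical inseparability results (e.g.\ the failure of separation for $\Pi^1_1$): two-sided separation lets one uniformly decide the diagonal set $D=\{x:(x,x)\in U\}$, forcing $D\in\Delta$. Since $D$ is $\Gamma$-complete, $D\in\Delta$ yields $\Gamma=\Delta$, contradicting non-self-duality. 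Only the closure of $\Gamma$ under continuous substitution and the universal set are needed here.

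The second half --- that inseparable pairs cannot occur on both sides at once --- is where determinacy is essential, and is the heart of Van Wesep's and Steel's theorems. Assume for contradiction a $\Gamma$-inseparable pair $(A_0,A_1)$ and a $\check{\Gamma}$-inseparable pair $(B_0,B_1)$. I would attach to these data a Wadge-style \emph{separation game} whose payoff is built from $A_0,A_1,B_0,B_1$ so that a winning strategy for player I decodes into a continuous separation of $(A_0,A_1)$ while a winning strategy for player II decodes into a continuous separation of $(B_0,B_1)$. By ${\sf AD}$ the game is determined, so one of the two pairs is separable, contradicting the choice of an inseparable pair on that side; hence at least one class separates. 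The delicate point is that the decoded separating set must actually land in $\Delta$, not merely be a continuous reduction, and this is where the position of $\Gamma_\alpha$ in the Wadge hierarchy enters, via the Steel--Van Wesep classification of non-self-dual pointclasses into union/$\Sigma$-type versus difference type --- a distinction governed by the successor/limit-cofinality pattern of $\alpha$ already visible in the constants $c_\alpha$ of Theorem \ref{thm:mthm1}.

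The main obstacle is exactly this design-and-decode step in the existence half: producing a single game whose two outcomes yield $\Delta$-separations of the two pairs, and verifying the strategy-to-$\Delta$-set translation uniformly across all types of non-self-dual $\Gamma$. In practice this forces a transfinite case analysis on the type of $\Gamma_\alpha$, each case invoking determinacy of an appropriate game. The conceptual payoff, however, is clean: determinacy converts the absence of a separator on one side into the presence of one on the other, which is precisely the asymmetry asserted by the Fact.
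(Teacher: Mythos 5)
First, a point of calibration: the paper does not prove this statement at all --- it is imported as a known deep theorem, with the two halves due to Van Wesep \cite{VW78} (at most one of $\Gamma_\alpha,\check{\Gamma}_\alpha$ has the separation property) and Steel \cite{St81} (at least one does), and it is then used as a black box in the proof of Theorem \ref{thm:mthm1}. So the benchmark for your attempt is those two papers, and against that benchmark your proposal has genuine gaps in both halves, even though your decomposition into ``at most one'' and ``at least one'' via inseparable pairs correctly mirrors the historical division of labor.

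For the Van Wesep half, your diagonalization does not go through. The separation property is a pure existence statement --- for each disjoint pair in $\Gamma$ \emph{some} separator in $\Delta$ exists --- and carries no uniformity whatsoever, so the pivotal sentence ``two-sided separation lets one uniformly decide the diagonal set $D$, forcing $D\in\Delta$'' is a non sequitur: nothing in the hypothesis selects separators in a way that could compute membership in $D$, and separation can never be applied to the pair $(D,\neg D)$ itself, since $\neg D\in\check{\Gamma}\setminus\Gamma$ by non-self-duality. The classical inseparability arguments you invoke (e.g.\ Kleene's failure of separation for $\mathbf{\Pi}^1_1$) rest on the \emph{reduction} property of the class being diagonalized, which is exactly what is unavailable here; the paper records only that the non-separating side has \emph{weak} reduction, and that fact is an output of the Van Wesep--Steel analysis, not an input to it. Indeed, Van Wesep's ``at most one'' half is itself a determinacy theorem proved by a Wadge-style game argument, so your claim that ``only the closure of $\Gamma$ under continuous substitution and the universal set are needed here'' cannot be right at this level of generality. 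For the Steel half you candidly defer the entire content: the payoff set of the game is never defined, the translation of winning strategies into separators that actually lie in $\Delta$ is never carried out, and the case analysis on the type of the non-self-dual class is named but not performed --- Steel's actual proof is a substantial induction along the Wadge hierarchy with genuinely different arguments for the different types of classes. What you have written is an accurate map of where the difficulty lives, but as it stands it establishes neither half of the Fact.
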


By $\Pi_{\alpha}$, we denote the one which has the separation property, and by $\Sigma_{\alpha}$, we denote the other one (which has the weak reduction property).
Then define $\Delta_\alpha=\Sigma_\alpha\cap\Pi_\alpha$.

A set $A\subseteq\om^\om$ is {\em $\Gamma$-complete} if $A\in\Gamma$ and $B\leq_wA$ for any $B\in\Gamma$.
By definition, a $\Sigma_\alpha$-complete set and a $\Pi_\alpha$-complete set exist for all $\alpha<\Theta$.

\begin{fact}[see \cite{Wesep78}]\label{fact:complete-cofinality}
A $\Delta_\alpha$-complete set exists if and only if the cofinality of $\alpha$ is countable.
\end{fact}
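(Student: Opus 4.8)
The plan is to recast the statement in purely order-theoretic terms using the semi-well-ordering of the Wadge degrees, and then treat the two directions separately. First I would identify the pointclass $\Delta_\alpha$ with an initial segment of the hierarchy. Since $(\Sigma_\alpha,\Pi_\alpha)$ is a non-self-dual pair, a set $A$ lies in $\Delta_\alpha=\Sigma_\alpha\cap\Pi_\alpha$ exactly when $A\leq_w S$ and $A\leq_w P$ for a $\Sigma_\alpha$-complete $S$ and a $\Pi_\alpha$-complete $P$; as $S$ and $P$ are $\leq_w$-incomparable, this is equivalent to the Wadge degree of $A$ lying strictly below the pair. Hence $\Delta_\alpha$ is precisely the (downward closed, semi-well-ordered) collection of all degrees strictly below the $\alpha$-th non-self-dual pair. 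A $\Delta_\alpha$-complete set $D$ is therefore nothing but a $\leq_w$-maximum of $\Delta_\alpha$, and such a $D$ must be self-dual: were it non-self-dual, its dual $\neg D$ would also lie in $\Delta_\alpha$, and completeness would force $\neg D\leq_w D$, a contradiction. Finally, because the enumeration $(\Gamma_\beta,\check\Gamma_\beta)_{\beta<\Theta}$ lists the non-self-dual pairs in increasing order, a cofinal family of degrees below the $\alpha$-th pair corresponds to a cofinal sequence in the ordinal $\alpha$, so the cofinality of the segment $\Delta_\alpha$ equals $\mathrm{cof}(\alpha)$. The task thus reduces to showing that this segment has a (necessarily self-dual) maximum iff $\mathrm{cof}(\alpha)$ is countable.

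Next I would prove existence when $\mathrm{cof}(\alpha)$ is countable. Fix a sequence $\langle A_n\rangle_{n<\om}$ whose degrees are cofinal in $\Delta_\alpha$ (a constant sequence if $\alpha$ is a successor), chosen at strictly increasing levels $<\alpha$, so that every $B\in\Delta_\alpha$ satisfies $B\leq_w A_n$ for some $n$. I then form the countable join $D=\bigoplus_n A_n$, whose input's first coordinate selects the index $n$ while the remainder is tested for membership in $A_n$. The decisive structural input, underlying Fact \ref{fact:vanWesep-Steel}, is that this countable join realizes the $\leq_w$-supremum of $\langle A_n\rangle$ and that this supremum is self-dual. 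Since each $A_n$ is strictly below the non-self-dual pair, the pair is an upper bound for $\langle A_n\rangle$, whence $D\leq_w$ either member of the pair; and because $D$ is self-dual while the pair is not, $D$ is strictly below the pair, i.e. $D\in\Delta_\alpha$. By the choice of the $A_n$, every $B\in\Delta_\alpha$ satisfies $B\leq_w A_n\leq_w D$ for some $n$, so $D$ is $\Delta_\alpha$-complete.

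The non-existence direction when $\mathrm{cof}(\alpha)$ is uncountable is the easy half. Any hypothetical $\Delta_\alpha$-complete $D$ lies in $\Delta_\alpha$, so its degree occurs at some level $\delta<\alpha$. As $\mathrm{cof}(\alpha)$ is uncountable, $\alpha$ is a limit, so there is a level $\gamma$ with $\delta<\gamma<\alpha$; the $\gamma$-th non-self-dual pair then belongs to $\Delta_\alpha$ but, lying strictly above $D$, does not reduce to it, contradicting completeness.

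The hard part will be the structural lemma invoked in the existence direction, namely that the supremum of a countable increasing sequence of Wadge degrees is attained by the countable join and is self-dual. I would either cite it from Van Wesep \cite{Wesep78} or reprove it under ${\sf AD}$ by making a uniform choice of the individual Wadge reductions to produce a continuous map witnessing $D\leq_w\neg D$ together with the least-upper-bound property. A secondary point demanding care is the bookkeeping identifying $\mathrm{cof}(\alpha)$ with the cofinality of the segment $\Delta_\alpha$, which rests on the order-preserving nature of the enumeration of non-self-dual pairs.
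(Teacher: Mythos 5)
The paper does not prove this statement; it cites it to van Wesep, so I am measuring your proposal against the standard argument, for which the paper supplies the needed ingredients (Facts \ref{fact:Wadge-SLO}, \ref{fact:sji-deg-pres} and \ref{fact:sjr-equal-sd}). Your overall architecture is the right one, but the existence direction has a genuine gap, and it sits exactly where you placed the ``bookkeeping.'' The cofinal sequence you posit cannot exist as stated: no sequence $\langle A_n\rangle$ of sets in $\Delta_\alpha$ can have the property that \emph{every} $B\in\Delta_\alpha$ satisfies $B\leq_w A_n$ for some $n$, because the join $D=\bigoplus_n A_n$ itself lies in $\Delta_\alpha$ (as you correctly show) while $D\not\leq_w A_n$ for any $n$ (each $A_n$ is non-self-dual of strictly smaller degree than the self-dual $D$). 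So completeness of $D$ does not follow ``by the choice of the $A_n$''; the actual content is that no Wadge degree lies strictly between the degree of $D$ and the $\alpha$-th non-self-dual pair. Proving that requires well-foundedness of $\leq_w$ (Martin--Monk) together with Steel--van Wesep: take a $\leq_w$-minimal $B\in\Delta_\alpha$ with $B\not\leq_w D$; by SLO (Fact \ref{fact:Wadge-SLO}) and self-duality of $D$ one gets $D\leq_w B$; $B$ cannot be non-self-dual, since its pair would then be a non-self-dual pair of index strictly between all $\beta_n$ and $\alpha$, contradicting the order-preserving enumeration; so $B$ is self-dual, hence $\sigma$-join-reducible (Fact \ref{fact:sjr-equal-sd}), $B\equiv_w\bigoplus_i B_i$ with $B_i<_w B$, and minimality forces $B_i\leq_w D$ for all $i$, whence $B\leq_w D$ by the least-upper-bound property of joins --- contradiction. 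Note also that you flagged the wrong step as hard: self-duality of $D$ is the easy half, since $A_n<_w A_{n+1}$ plus SLO gives $\neg A_n\leq_w A_{n+1}$, so $\neg D\equiv_w\bigoplus_n\neg A_n\leq_w D$. (In the successor case, your ``constant sequence'' at the maximum of $\Delta_{\beta+1}$ presupposes that the maximum exists, which is the thing being proved; take instead $D=S\oplus\neg S$ for $S$ a $\Sigma_\beta$-complete set, self-dual via the coordinate flip, and run the same between-degrees argument.)

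The non-existence direction has the mirror-image of the same gap. Your key step is ``any $D\in\Delta_\alpha$ occurs at some level $\delta<\alpha$,'' and you then use uncountable cofinality only to pick $\gamma$ with $\delta<\gamma<\alpha$. But for a limit $\alpha$ of \emph{countable} cofinality your reasoning would apply verbatim and ``refute'' the existence half, since the complete set there lies above every level $\beta<\alpha$: so the cofinality hypothesis must enter precisely in the quoted step, and for self-dual $D$ it does not come for free. The correct argument: if $D\in\Delta_\alpha$ is self-dual, then $D\equiv_w\bigoplus_i D_i$ with $D_i<_w D$ (Facts \ref{fact:sjr-equal-sd}, \ref{fact:sji-deg-pres}); by induction each $D_i$ lies below the $\beta_i$-th pair for some $\beta_i<\alpha$; and $\mathrm{cof}(\alpha)>\om$ yields $\sup_i\beta_i<\alpha$, placing $D$ below a pair of index $<\alpha$, after which your ``pick $\gamma$ above $\delta$'' finish is fine. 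As written, your proof proves too much; with the minimal-counterexample lemma and the cofinality step supplied, it becomes the standard van Wesep argument.
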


We denote the Borel hierarchy by $(\Si{\alpha},\PP{\alpha},\De{\alpha})_{\alpha<\om_1}$.
More precisely, a set is in $\Si{1}$ if it is open, and a set is in $\Si{\alpha}$ if it is a countable union of sets in $\bigcup_{\beta<\alpha}\PP{\beta}$, where $\PP{\alpha}$ is the dual of $\Si{\alpha}$.
Then, $\De{\alpha}=\Si{\alpha}\cap\PP{\alpha}$.

\begin{example}[Wadge {\cite[Sections V.E and V.F]{Wadge83}}]
The Wadge ranks of sets of finite Borel ranks are calculated as follows.
\begin{itemize}
\item $\Delta_1=$ clopen sets ($=\mathbf{\Delta}^0_1$), $\Sigma_1=$ open sets ($=\mathbf{\Sigma}^0_1$), and $\Pi_1=$ closed sets ($=\mathbf{\Pi}^0_1$).
\item For $\alpha<\om_1$, $\Delta_\alpha$, $\Sigma_\alpha$, and $\Pi_\alpha$ correspond to the $\alpha$-th level of the Hausdorff difference hierarchy.
\item $\Sigma_{\om_1}=F_\sigma$ ($=\mathbf{\Sigma}^0_2$), and $\Pi_{\om_1}=G_\delta$ ($=\mathbf{\Pi}^0_2$).
\item For $\alpha<\om_1$, $\Delta_{\om_1^\alpha}$, $\Sigma_{\om_1^\alpha}$, and $\Pi_{\om_1^\alpha}$ correspond to the $\alpha$-th level of the difference hierarchy over $F_\sigma$.
\item $\Sigma_{\om_1^{\om_1}}=G_{\delta\sigma}$ ($=\mathbf{\Sigma}^0_3$), and $\Pi_{\om_1^{\om_1}}=F_{\sigma\delta}$ ($=\mathbf{\Pi}^0_3$).
\item Generally, $\Sigma_{\om_1\uparrow\uparrow n}=\mathbf{\Sigma}^0_n$, where $\om_1\uparrow\uparrow n$ is the $n$-th level of the superexponential hierarchy of base $\om_1$.
\end{itemize}
\end{example}

Note that the Wadge rank of $\mathbf{\Sigma}^0_{\om}$-complete set is not the first fixed point, but the $\om_1$-th fixed point, of the exponential tower of base $\om_1$.
In general, Wadge {\cite[Sections V.E and V.F]{Wadge83}} has also determined the Wadge ranks of sets of infinite Borel ranks, which are described by using the Veblen hierarchy of base $\om_1$.


Let $\mathcal{X}$ and $\mathcal{Y}$ be topological spaces.
For $A\subseteq\mathcal{X}$ and $B\subseteq\mathcal{Y}$, we write $A/\mathcal{X}\leq_wB/\mathcal{Y}$ if there is a continuous function $\theta\colon \mathcal{X}\to \mathcal{Y}$ such that $X\in A$ if and only if $\theta(X)\in B$.

\begin{lemma}\label{lem:nsd-complete}
For any non-self-dual $A\subseteq\om^\om$, there is $B\subseteq 2^\om$ such that $A/\om^\om\equiv_wB/2^\om$.
\end{lemma}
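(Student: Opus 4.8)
The plan is to reduce the statement to the existence, for a non-self-dual $A$, of a continuous self-reduction of $A$ with relatively compact image, and then to transport that self-reduction through $2^\om$. So suppose first that we have produced a continuous $g\colon\om^\om\to\om^\om$ with $g^{-1}(A)=A$ whose image is contained in a compact set $K\subseteq\om^\om$ (equivalently $K\subseteq\prod_k\{0,\dots,b(k)\}$ for some $b$). Being compact, zero-dimensional and metrizable, $K$ embeds as a closed subset of $2^\om$; fix a homeomorphism $h$ of $K$ onto its closed image, and, using that nonempty closed subsets of $2^\om$ are retracts, fix a retraction $\rho\colon 2^\om\to h(K)$. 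Setting $\theta=h\circ g\colon\om^\om\to 2^\om$ and $\psi=h^{-1}\circ\rho\colon 2^\om\to K\subseteq\om^\om$ (both continuous) gives $\psi\circ\theta=g$, since $\rho$ fixes $h(K)$. Defining $B=\psi^{-1}(A)\subseteq 2^\om$, the reduction $B/2^\om\leq_w A/\om^\om$ is witnessed by $\psi$, and $A/\om^\om\leq_w B/2^\om$ by $\theta$, because $x\in A\iff g(x)\in A\iff\psi(\theta(x))\in A\iff\theta(x)\in B$. Hence $A/\om^\om\equiv_w B/2^\om$, and the whole Lemma rests on the compact-image self-reduction.

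The hard part is exactly this claim, and it is where non-self-duality must enter. I would attack it through the Wadge game $G_w(A,A)$ in which I builds $x$, II builds $y$, and II wins iff $x\in A\iff y\in A$: II wins via the identity, and what I want is a winning strategy with relatively compact image, equivalently a winning strategy for II in the variant $G^b$ in which II is confined to the compact box $\prod_k\{0,\dots,b(k)\}$, for some $b$ I am free to choose. The goal is to rule out, using $A\not\leq_w\neg A$, that I wins every $G^b$. Unwinding the game, a winning strategy for I in $G^b$ is precisely a continuous reduction witnessing $\bigl(A\cap\prod_k\{0,\dots,b(k)\}\bigr)/\prod_k\{0,\dots,b(k)\}\leq_w\neg A/\om^\om$. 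The obstacle I expect to be genuine is the amalgamation: one must glue these partial reductions, as $b$ ranges over all bounds, into a single continuous reduction of $A$ to $\neg A$ so as to contradict non-self-duality, and this is delicate precisely because $\om^\om$ is not $\sigma$-compact, so no countable family of boxes exhausts the space.

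That the hypothesis is not cosmetic, and indeed dictates the shape of the argument, can be seen from the self-dual case: by the Steel--Van Wesep analysis a genuinely self-dual set is a countable join $\bigoplus_n A_n$ with each $A_n<_w A$, and confining the leading coordinate to a finite set exposes only a finite subjoin, which is strictly below $A$; so such a set admits no compact-image self-reduction, and the compactness can only come from non-self-duality. If the amalgamation above proves recalcitrant, I would instead establish the compact-image self-reduction by transfinite induction on the Wadge rank of $A$ (invoking Fact~\ref{fact:vanWesep-Steel} to organize the non-self-dual pairs), constructing a relatively compact complete set for each non-self-dual Wadge class directly from the set-theoretic operations that generate it; the non-self-dual stages are exactly those that can be realized with the bounded, $2^\om$-style branching, which is what makes the compact realization possible.
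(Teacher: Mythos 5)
Your opening reduction is sound: the Lemma is indeed equivalent to producing a continuous $g\colon\om^\om\to\om^\om$ with $g^{-1}(A)=A$ and relatively compact image, and your transport of such a $g$ through $2^\om$ (embed the compact image, retract back) is correct. The problem is that this equivalent claim is never proved, and it is the entire content of the statement. Your main route --- suppose Player I wins every box-game $G^b$ and amalgamate I's strategies into one reduction witnessing $A\leq_w\neg A$ --- founders exactly where you say it does: there are continuum many bounds $b$, no countable family of boxes exhausts $\om^\om$, and the maps $f_b$ are only defined on their respective boxes, so there is no evident continuous gluing; this is not a "delicate" step but a dead end as formulated. Your fallback --- transfinite induction on the Wadge rank, building compact-image complete sets "from the set-theoretic operations that generate" each non-self-dual class --- is a plan, not an argument: under ${\sf AD}$ the non-self-dual classes of arbitrary rank $<\Theta$ do not come equipped with such a generating description to induct on. In addition, your closing "sanity check" is false: a self-dual set can admit a compact-image self-reduction, e.g.\ the clopen set $[\langle 0\rangle]\subseteq\om^\om$ via $X\mapsto 0^\om$ or $1^\om$ according to $X(0)$. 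Your argument for it fails because a finite subjoin of sets strictly below $A$ can be Wadge equivalent to $A$ (a $\Sigma_\alpha$-complete set joined with a $\Pi_\alpha$-complete set is $\Delta_{\alpha+1}$-complete); the obstruction you describe is real only at limit ranks of countable cofinality, which is exactly item (5) of Lemma \ref{lem:non-proper-Wadge-for-reals} in the paper.

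The missing idea is to use determinacy on a single game that Player II \emph{loses}, rather than trying to refute I's success in a family of games that II should win. Since $A$ is non-self-dual, $\neg A\not\leq_w A$, so II has no winning strategy in $G_w(\neg A,A)$; by ${\sf AD}$, Player I has one. I's winning strategy is a continuous map $\theta\colon\hat{\om}^\om\to\om^\om$ (where $\hat{\om}=\om\cup\{{\sf pass}\}$ and II may pass) from II's plays to I's total plays, satisfying $A(\theta(X))=A(X^{\sf p})$ whenever II's play $X$ contains infinitely many genuine moves. Precomposing with the unary, pass-padded encoding $\eta(0^{n_0}10^{n_1}1\dots)={\sf pass}^{n_0}n_0{\sf pass}^{n_1}n_1\dots$ turns the input space into $2^\om$, and $B=(\theta\circ\eta)^{-1}(A)$ works: $\theta\circ\eta$ witnesses $B/2^\om\leq_wA/\om^\om$, while $\tau(X)=0^{X(0)}10^{X(1)}1\dots$ satisfies $(\eta\circ\tau(X))^{\sf p}=X$ and hence witnesses $A/\om^\om\leq_wB/2^\om$. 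Note that this construction also delivers precisely your compact-image self-reduction, namely $\theta\circ\eta\circ\tau$, since $\eta$ and $\tau$ are Lipschitz and $\eta(\tau(\om^\om))\subseteq\eta(2^\om)$ is compact; so your framework is compatible with the proof, but the game $G_w(\neg A,A)$ and Player I's strategy are what make it go through.
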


\begin{proof}
If $A$ is non-self-dual, then $\neg A\not\leq_wA$, and therefore, Player I wins in the Wadge game $G_w(\neg A,A)$.
Put $\hat{\om}=\om\cup\{{\sf pass}\}$.
A winning strategy for Player I gives a continuous function $\theta\colon \hat{\om}^\om\to\om^\om$ such that for any $X\in\hat{\om}^\om$, $\neg A(\theta(X))\not=A(X^{\sf p})$, that is, $A(\theta(X))=A(X^{\sf p})$.
Here, $X^{\sf p}$ is the result of removing all occurrences of {\sf pass}es from $X$.
Define $\eta\colon 2^\om\to\hat{\om}^\om$ by 
\[\eta(0^{n_0}10^{n_1}1\dots)={\sf pass}^{n_0}n_0{\sf pass}^{n_1}n_1\dots\]
Then, define $B(X)=A(\theta\circ\eta(X))$.
We claim that $A/\om^\om\equiv_wB/2^\om$.
Clearly $\theta\circ\eta$ witnesses that $B\leq_wA$.
To see $A\leq_wB$, let $X\in\om^\om$ be a given sequence.
Then, for $\tau(X)=0^{X(0)}10^{X(1)}1\dots$, we have $(\eta\circ\tau(X))^{\sf p}=X$.
Thus,
\[B(\tau(X))=A(\theta\circ\eta\circ\tau(X))=A((\eta\circ\tau(X))^{\sf p})=A(X),\]
where the second equality follows from our choice of $\theta$.
This concludes that $A\leq_wB$.
\end{proof}

\subsection{$\mathcal{Q}$-Wadge degrees}\label{sec:1-4}

A set $A\subseteq\om^\om$ can be identified with its characteristic function $\chi_A\colon\om^\om\to 2$.
Thus, the Wadge degrees of subsets of $\om^\om$ can be viewed as the degrees of $2$-valued functions on $\om^\om$.
The Wadge degrees have been extended in various directions.
For instance, there are various works on the Wadge degrees of partial $2$-valued functions on $\om^\om$ (Wadge \cite{Wadge83}), ordinal-valued functions on $\om^\om$ (Steel, cf.~Duparc \cite{Dup03}), and $k$-partitions of $\om^\om$ (Hertling, cf.~Selivanov \cite{Seli17}).
We can encapsulate all those extensions within the following framework (see also Kihara-Montalb\'an \cite{KMta,KMta2}):

\begin{definition}
Let $(\mathcal{Q};\leq_\mathcal{Q})$ be a quasi-ordered set.
For $\mathcal{Q}$-valued functions $\A,\B\colon\om^\om\to\mathcal{Q}$, we say that {\em $\A$ is $\Q$-Wadge reducible to $\B$} (written $\A\leq_w \B$) if there is a continuous function $\theta\colon\om^\om\to\om^\om$ such that 
\[
(\forall X\in\om^\om)\;\A(X)\leq_\mathcal{Q}\B(\theta(X)).
\]
\end{definition}

As a special case, one can study Wadge's notion of degrees of inseparability of pairs, which will turn out to be a key tool for analyzing the $\mathbf{m}$-degrees.
In his PhD thesis \cite[Section I.E]{Wadge83}, Wadge introduced the notion of reducibility for pairs of subsets of $\om^\om$.
For $A,B,C,D\subseteq\om^\om$, we say that {\em $(A,B)$ is Wadge reducible to $(C,D)$} if there exists a continuous function $\theta\colon \om^\om\to\om^\om$ such that for any $x\in\om^\om$,
\[(x\in A\;\Longrightarrow \theta(x)\in C)\mbox{ and }(x\in B\;\Longrightarrow \theta(x)\in D).\]

Roughly speaking, this reducibility estimates how inseparable a given pair is.
Note that Wadge reducibility for pairs is equivalent to Wadge reducibility for $\{\top,0,1,\bot\}$-valued functions by identifying a pair $(A,B)$ with a function $f_{A,B}$ defined by
\[
f_{A,B}(x)=
\begin{cases}
\top&\mbox{ if }x\in A\cap B,\\
0&\mbox{ if }x\in A\setminus B,\\
1&\mbox{ if }x\in B\setminus A,\\
\bot&\mbox{ if }x\not\in A\cup B,\\
\end{cases}
\]
where $\bot<0,1<\top$, and $0$ and $1$ are incomparable.
It is easy to see that the Wadge degrees of $\{\top,0,1,\bot\}$-valued functions consist exactly of the Wadge degrees of $\{0,1,\bot\}$-valued functions plus a greatest degree, where the greatest degree consists of functions containing $\top$ in their ranges.

Hereafter we use the symbols $\mathbf{2}$ and $\three$ to denote $\{0,1\}$ and $\{0,1,\bot\}$, respectively, where $\mathbf{2}$ is considered as a discrete order, and $\three$ is ordered by $\bot< 0,1$ as mentioned above, which is also known as Plotkin's domain.

Wadge determined the structure of the first few Wadge degrees of inseparability of pairs (equivalently those of $\{0,1,\bot\}$-valued functions).
For $\A\colon \om^\om\to\three$, we define $\neg\A\colon \om^\om\to\three$ by $\A(X)=1-\A(X)$ if $\A(X)\in\{0,1\}$; otherwise $\A(X)=\bot$.
If $\A$ is $\mathbf{2}$-valued, then $\neg\A$ is obviously the complement of $\A$.
Under the axiom of determinacy, Wadge has shown that the semilinear ordering principle holds for $\three$-valued functions.

\begin{fact}[Wadge {\cite[Theorem II.E2]{Wadge83}}]\label{fact:Wadge-SLO}
For any $\A,\B\colon \om^\om\to\three$, either $\B\leq_w\A$ or $\neg\A\leq_w\B$ holds.
\end{fact}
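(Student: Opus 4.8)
The plan is to prove this semilinear ordering principle for $\three$-valued functions by a Wadge-game argument, in the same spirit as the reduction used in Lemma~\ref{lem:nsd-complete}. Given $\A,\B\colon\om^\om\to\three$, I would consider the Wadge game $G_w(\B,\A)$ in which Player~I builds an input $X\in\om^\om$ to $\B$ and Player~II builds (possibly passing, with moves in $\hat\om=\om\cup\{{\sf pass}\}$) an input $Y\in\om^\om$ to $\A$, where Player~II wins a run iff $Y$ is infinite and $\B(X)\leq_\Q\A(Y)$. Under ${\sf AD}$ this game is determined (its payoff set is a subset of the run space, which is homeomorphic to $\om^\om$; for Baire-class $\A,\B$ it is Borel, so determinacy is available already in ${\sf ZFC}$), so exactly one of the players has a winning strategy.

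First I would dispose of the easy alternative. If Player~II has a winning strategy $\sigma$, then since winning forces II to produce an infinite output, $\sigma$ induces a total continuous map $\theta\colon\om^\om\to\om^\om$ with $\B(X)\leq_\Q\A(\theta(X))$ for every $X$, which is exactly $\B\leq_w\A$, the first alternative.

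The substantive case is when Player~I has a winning strategy $\tau$. Letting Player~II simply copy a given $Y\in\om^\om$ without ever passing, the strategy $\tau$ produces an output $g(Y)\in\om^\om$ continuously (indeed in a Lipschitz manner) in $Y$, and the winning condition yields $\B(g(Y))\not\leq_\Q\A(Y)$ for all $Y$. I claim $g$ witnesses $\neg\A\leq_w\B$, which I would verify by the three-way case analysis on $\A(Y)$ in the order $\bot<0,1$ of $\three$: if $\A(Y)=0$ then $\B(g(Y))\not\leq_\Q 0$ forces $\B(g(Y))=1=\neg\A(Y)$; if $\A(Y)=1$ then symmetrically $\B(g(Y))=0=\neg\A(Y)$; and if $\A(Y)=\bot$ then $\B(g(Y))\not\leq_\Q\bot$ only forces $\B(g(Y))\in\{0,1\}$, while $\neg\A(Y)=\bot$ lies below everything, so $\neg\A(Y)=\bot\leq_\Q\B(g(Y))$ holds automatically. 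In every case $\neg\A(Y)\leq_\Q\B(g(Y))$, giving $\neg\A\leq_w\B$, the second alternative.

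The only real subtlety I anticipate is bookkeeping rather than a genuine obstacle: one must place the pair in the correct order $(\B,\A)$ in the game and check that the partiality of $\three$ is matched by the choice of the $\neg$ operation. The $\bot$-case is exactly the point where the non-linearity of $\three$ enters --- there Player~I's win yields only the weak information $\B(g(Y))\in\{0,1\}$, and it is precisely because $\neg$ sends $\bot$ to the least element $\bot$ that this weak information still suffices. This is why the principle is stated with $\neg\A$ on the left rather than with any strictly larger value.
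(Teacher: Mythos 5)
Your proof is correct, and it is essentially the standard argument: the paper itself gives no proof of Fact \ref{fact:Wadge-SLO} (it is cited directly from Wadge's thesis), and your determinacy argument for the game $G_w(\B,\A)$ --- with Player II's win yielding $\B\leq_w\A$ and Player I's win yielding, via the case analysis at $\bot$, a reduction witnessing $\neg\A\leq_w\B$ --- is exactly the classical Wadge-game proof of the semilinear ordering principle, using the same game-with-passes technique that the paper deploys in Lemma \ref{lem:nsd-complete}. Your handling of the $\bot$ case, where $\B(g(Y))\not\leq_\three\A(Y)$ only forces $\B(g(Y))\in\{0,1\}$ but $\neg\A(Y)=\bot$ lies below everything, is precisely the point that makes the statement true for the partial order $\three$, and you identify it correctly.
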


For a function $\A\colon \om^\om\to\Q$ and a finite string $\sigma\in\om^{<\om}$, by $\A\upto[\sigma]$ we denote the restriction of $\A$ up to $[\sigma]$, that is, $(\A\upto[\sigma])(X)=\A(\sigma\fr X)$.
If $\sigma$ is a string of length $1$, $\sigma=\langle n\rangle$ say, then we also write $\A\upto n$ to denote $\A\upto[\langle n\rangle]$.

\begin{definition}
We say that a $\mathcal{Q}$-Wadge degree $\mathbf{a}$ is {\em $\sigma$-join-reducible} if $\mathbf{a}$ is the least upper bound of a countable collection $(\mathbf{b}_i)_{i\in\om}$ of $\mathcal{Q}$-Wadge degrees such that $\mathbf{b}_i<_w\mathbf{a}$.
Otherwise, we say that $\mathbf{a}$ is {\em $\sigma$-join-irreducible}.
\end{definition}

Given a function $\A$ we use the following notation:
\[\F(\A)=\{X:(\forall n)\;\A\upto[X\upto n]\equiv_w\A\}.\]

The following fact gives a better way to characterize $\si$-join-reducibility, which is a straightforward consequence of the well-foundedness of the Wadge degrees (cf.~\cite{AnLo12,Wesep78}).

\begin{fact}\label{fact:sji-deg-pres}
({\sf AD})
A set $\A\subseteq\om^\om$ is $\sigma$-join-irreducible if and only if $\F(\A)$ is nonempty.

A function $\A\subseteq\om^\om$ is $\sigma$-join-reducible if and only if it is Wadge equivalent to a function of the form $\bigoplus_{n\in\om}\A_n$, where each $\A_n$ is $\si$-join-irreducible and $\A_n<_w\A$, and where $\bigoplus_{n\in\om}\A_n$ is defined by $(\bigoplus_{n\in\om}\A_n)(n\fr X)=\A_n(X)$.
\end{fact}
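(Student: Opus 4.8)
The plan is to prove the contrapositive form of the first assertion, namely that $\A$ is $\sigma$-join-reducible if and only if $\F(\A)=\emptyset$, and then bootstrap the decomposition in the second assertion from it. Throughout I would use two elementary observations: first, that $\A\upto[\sigma]\leq_w\A$ for every string $\sigma$, witnessed by $Y\mapsto\sigma\fr Y$; and second, that $\bigoplus$ computes least upper bounds in the $\Q$-Wadge preorder, since each summand reduces into the join via $Y\mapsto i\fr Y$, while a family of reductions $\B_i\leq_w\C$ assembles into $\bigoplus_i\B_i\leq_w\C$ by acting on each clopen piece $[i]$ separately. The first step is to observe that $T=\{\sigma:\A\upto[\sigma]\equiv_w\A\}$ is a tree: if $\A\upto[\sigma\fr\tau]\equiv_w\A$ then $\A\geq_w\A\upto[\sigma]\geq_w\A\upto[\sigma\fr\tau]\equiv_w\A$ forces $\sigma\in T$. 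By definition $\F(\A)=[T]$, so $\F(\A)=\emptyset$ is precisely the well-foundedness of $T$.

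For the forward direction I would argue that a point of $\F(\A)$ obstructs any countable-join representation. Suppose $\A$ were $\sigma$-join-reducible; since $\bigoplus$ is the least upper bound, $\A\equiv_w\bigoplus_i\B_i$ for functions $\B_i<_w\A$. Pick $X\in\F(\A)$ and a continuous $\theta$ witnessing $\A\leq_w\bigoplus_i\B_i$. The map $Y\mapsto\theta(Y)(0)$ is continuous into the discrete space $\om$, hence locally constant, so there is $n_0$ with $\theta$ landing in the piece $[i_0]$ on all of $[X\upto n_0]$. Composing $\theta$ with the shift then gives $\A\upto[X\upto n_0]\leq_w\B_{i_0}$, while $X\in\F(\A)$ gives $\A\upto[X\upto n_0]\equiv_w\A$; hence $\A\leq_w\B_{i_0}<_w\A$, a contradiction. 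For the reverse direction, assume $T$ is well-founded and let $F$ be its frontier, the minimal strings outside $T$; since every $X$ leaves $T$ at a finite stage, $F$ is a countable barrier and $\{[\sigma]:\sigma\in F\}$ partitions $\om^\om$. Reindexing $F=\{\sigma_i\}_i$ and reading off $\A$ on each piece gives $\A\equiv_w\bigoplus_i(\A\upto[\sigma_i])$, where $\A\upto[\sigma_i]<_w\A$ because $\sigma_i\notin T$; thus $\A$ is the join of strictly smaller degrees and is $\sigma$-join-reducible.

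Combining the two directions proves the first assertion. For the second assertion one direction is immediate, since a join of degrees strictly below $\A$ has $\A$ as its least upper bound. For the converse I would refine the barrier decomposition by a well-founded induction on the Wadge rank — this is the only place ${\sf AD}$ is used, through well-foundedness of $\leq_w$ — showing that every function is Wadge-equivalent to a countable join of $\sigma$-join-irreducible functions each Wadge-reducible to it, the base case being a $\sigma$-join-irreducible function itself and the inductive case applying the barrier decomposition and the induction hypothesis to its strictly smaller pieces. Applying this to each $\A\upto[\sigma_i]<_w\A$ and flattening the resulting double join through a pairing bijection of $\om\times\om$ with $\om$ yields $\A\equiv_w\bigoplus_n\A_n$ with each $\A_n$ $\sigma$-join-irreducible and, being reducible to some $\A\upto[\sigma_i]<_w\A$, strictly below $\A$. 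The main obstacle is the forward direction of the first assertion, where continuity must be exploited to trap the image of an entire neighborhood of a point of $\F(\A)$ inside a single summand; the remaining delicacy is the bookkeeping in the well-founded refinement, keeping the family of irreducible components countable and strictly below $\A$.
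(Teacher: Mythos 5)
Your proposal is correct: the tree $T=\{\sigma:\A\upto[\sigma]\equiv_w\A\}$ (well-founded exactly when $\F(\A)=\emptyset$), the continuity argument trapping a neighborhood $[X\upto n_0]$ of a point $X\in\F(\A)$ inside a single summand of $\bigoplus_i\B_i$, and the frontier decomposition $\A\equiv_w\bigoplus_i(\A\upto[\sigma_i])$ refined by induction along the well-founded Wadge hierarchy constitute precisely the standard argument. The paper offers no proof of this statement---it is recorded as a Fact said to be a straightforward consequence of the well-foundedness of the Wadge degrees, with references to Andretta--Louveau and van Wesep---and your write-up is exactly the argument that citation points to, including the correct identification of where {\sf AD} (via well-foundedness of $\leq_w$) is actually used.
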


We also need Steel--van Wesep's theorem \cite{Wesep78}.
%

\begin{fact}\label{fact:sjr-equal-sd}
Then a subset of $\om^\om$ is self-dual if and only if it is $\sigma$-join-reducible.
\end{fact}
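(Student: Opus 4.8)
The plan is to establish the two implications separately, using Fact~\ref{fact:sji-deg-pres} to translate freely between $\si$-join-(ir)reducibility, the (non)emptiness of $\F(A)$, and the explicit decomposition $A\equiv_w\bigoplus_n A_n$.

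For the direction from $\si$-join-reducibility to self-duality, I would first invoke Fact~\ref{fact:sji-deg-pres} to write $A\equiv_w\bigoplus_{n}A_n$ with each $A_n<_wA$. Since self-duality depends only on the Wadge degree, it suffices to reduce $\bigoplus_nA_n$ to its negation $\bigoplus_n\neg A_n$. The key step is to apply the semilinear ordering principle (Fact~\ref{fact:Wadge-SLO}) with $\A=\neg A$ and $\B=A_n$: one of $A_n\le_w\neg A$ or $A\le_w A_n$ must hold, and the latter is impossible since it would force $A\equiv_wA_n$, contradicting $A_n<_wA$. Hence $A_n\le_w\neg A\equiv_w\bigoplus_m\neg A_m$ via some continuous $\theta_n$, and I would assemble these into a single continuous reduction by setting $\theta(n\fr Y)=\theta_n(Y)$; reading off the leading coordinate $n$ keeps $\theta$ continuous, and a direct check gives $(\bigoplus_nA_n)(n\fr Y)=A_n(Y)=(\bigoplus_m\neg A_m)(\theta_n(Y))$, so $A\le_w\neg A$, i.e.\ $A$ is self-dual.

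For the converse I would argue the contrapositive: a $\si$-join-irreducible set is non-self-dual. By Fact~\ref{fact:sji-deg-pres}, irreducibility yields a branch $X^*\in\F(A)$, so that $A\upto[X^*\upto n]\equiv_wA$ for every $n$; the whole content is that this self-similarity along $X^*$ obstructs a reduction $A\le_w\neg A$. I would reformulate non-self-duality as a win for Player~I in the Wadge game $G_w(A,\neg A)$, where Player~I builds $x$, Player~II builds $y$ (with passes), and Player~I wins exactly when $A(x)=A(y)$. The strategy for Player~I is to follow $X^*$ by default while watching Player~II: using the equivalences $A\upto[X^*\upto n]\equiv_wA$ and the reductions they supply, Player~I routes the play into a self-similar copy of $A$ and copies whatever membership behaviour Player~II eventually commits to, thereby forcing $A(x)=A(y)$. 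The simplest instance is illustrative: for the open set $A=\{x\in2^\om:\exists n\,x(n)=1\}$ one has $0^\infty\in\F(A)$, and Player~I simply plays $0$'s until Player~II reveals a $1$ and then matches it, which wins.

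The main obstacle is precisely the construction and verification of Player~I's strategy for an arbitrary self-similar $A$. In general the ``commitment'' of Player~II is not a single bit, so Player~I must use the continuous reductions witnessing $A\upto[X^*\upto n]\equiv_wA$ to transport Player~II's entire run into an $\equiv_wA$-copy, handle the passing moves correctly, and control the countably many side-branches $[X^*\upto n\fr\langle k\rangle]$ with $k\ne X^*(n+1)$; the well-foundedness of the Wadge ordering is what guarantees these side-branches are genuinely simpler and can be dispatched by recursion. Proving that the resulting strategy is winning, i.e.\ that it really forces $A(x)=A(y)$ on every run, is the delicate part and is where the Steel--van Wesep analysis does its work; once it is in place, Player~I wins $G_w(A,\neg A)$, so $A\not\le_w\neg A$ and $A$ is non-self-dual, completing the equivalence.
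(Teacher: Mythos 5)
Your forward direction ($\sigma$-join-reducible $\Rightarrow$ self-dual) is correct and complete: decompose $A\equiv_w\bigoplus_n A_n$ via Fact~\ref{fact:sji-deg-pres}, rule out $A\leq_w A_n$ by strictness, extract $A_n\leq_w\neg A$ from the semilinear ordering principle, and glue the reductions by reading the leading coordinate. This is the standard argument, and note that the paper itself offers no proof at all of Fact~\ref{fact:sjr-equal-sd} --- it is quoted as Steel--van Wesep's theorem from \cite{Wesep78} --- so your attempt has to stand on its own.

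It is in the converse direction that there is a genuine gap, and your own text concedes it: ``once it is in place, Player~I wins'' assumes exactly the content of the theorem, so deferring the delicate part to ``the Steel--van Wesep analysis'' is circular in a blind proof of Steel--van Wesep's theorem. Concretely, the proposed strategy for Player~I cannot work as described. Your illustrative example is an open set, where membership is witnessed at a finite stage and hence can be matched; but for general $A$ (already for a $\mathbf{\Pi}^0_2$-complete set) Player~II never ``commits'' to a membership value at any finite stage, may pass unboundedly, and moves second, so there is no round at which Player~I can read off II's behaviour and copy it --- the self-similarity $A\upto[X^*\upto n]\equiv_w A$ supplies continuous reductions between copies of $A$, but nothing that converts II's infinite run into moves I must make \emph{first}. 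The actual proof in the literature runs in the opposite direction: one assumes self-duality, i.e.\ a winning strategy for II in $G_w(A,\neg A)$ (equivalently a continuous $\theta$ with $A(X)\neq A(\theta(X))$), and together with the reductions coming from $X^*\in\F(A)$ derives a contradiction by the Martin--Monk method --- composing infinitely many strategies along the coordinates of a $z\in 2^\om$ so that the map $z\mapsto A(x_0(z))$ becomes a flip set, contradicting the Baire property of all sets under ${\sf AD}$. That global, nonconstructive argument is of a different nature from building a single winning strategy for Player~I, and it is the missing idea; without it your second direction is a restatement of the goal rather than a proof.
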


\section{The structure of $\{0,1,\bot\}$-valued functions}\label{sec:2}

\subsection{The proper $\three$-Wadge degrees}\label{section:proper-three-Wadge}

For $\Gamma\in\{\Sigma,\Pi,\Delta\}$, define $\Gamma_\alpha^\pthree$ to be the class of all $\three$-valued functions which are Wadge reducible to a $\Gamma_\alpha$ set, that is,
\[\Gamma_\alpha^\pthree=\{\A:\om^\om\to\three\mid (\exists S\in\Gamma_\alpha)\;\A\leq_wS\}.\]
Note that $\Delta_\alpha^\pthree=\Sigma_\alpha^\pthree\cap\Pi_\alpha^\pthree$ does {\em not} hold anymore.

Let $\mathcal{D}^\pthree_w$ be the set of all Wadge degrees of $\three$-valued functions.
Then, we define $\mathcal{D}_w\subseteq\mathcal{D}^\pthree_w$ as the set of all Wadge degrees which contain $\mathbf{2}$-valued functions.
A Wadge degree $\mathbf{d}$ is called a {\em proper $\three$-Wadge degree} if $\mathbf{d}\in\mathcal{D}^\pthree_w\setminus\mathcal{D}_w$.
For a Wadge degree $\mathbf{d}\in\mathcal{D}^\pthree_w$, let $\Gamma_\mathbf{d}$ be the collection of all $\A\colon\om^\om\to\three$ such that $\A\leq_w\B$ for some $\B\in\mathbf{d}$.
Note that any $\B\in\mathbf{d}$ is $\Gamma_\mathbf{d}$-complete.

\begin{lemma}\label{lem:insert}
For any proper $\three$-Wadge degree $\mathbf{d}$, there is $\alpha<\Theta$ such that
\[\Delta_\alpha^\pthree\subseteq\Gamma_\mathbf{d}\subseteq\Sigma_\alpha^\pthree\cap\Pi_\alpha^\pthree.\]
\end{lemma}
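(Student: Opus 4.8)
The plan is to locate $\mathbf{d}$ at the unique $2$-valued ``level'' $\alpha$ to which it belongs, using the semilinear ordering principle for $\three$-valued functions (Fact~\ref{fact:Wadge-SLO}) together with the hypothesis that $\mathbf{d}$ is proper. Fix $\B\in\mathbf{d}$, so that $\Gamma_\mathbf{d}=\{\A\colon\om^\om\to\three\mid\A\le_w\B\}$, and for each $\beta<\Theta$ let $S_\beta$ and $P_\beta$ be a $\Sigma_\beta$-complete and a $\Pi_\beta$-complete set, respectively (so $P_\beta\equiv_w\neg S_\beta$). First I would record a clean dichotomy: for a \emph{non-self-dual} $2$-valued $S$, applying Fact~\ref{fact:Wadge-SLO} to $\A:=S$ and to $\A:=\neg S$ shows a priori that each of $S,\neg S$ is either $\le_w\B$ or $\ge_w\B$; but $\B\equiv_w S$ and $\B\equiv_w\neg S$ are both impossible because $\mathbf{d}$ is proper (it contains no $2$-valued function), so the ``mixed'' cases collapse and exactly one of the following holds: both $S,\neg S\le_w\B$, or both $\B\le_w S,\neg S$. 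Thus every non-self-dual level is cleanly either \emph{below} or \emph{above} $\B$. Since the $2$-valued Wadge degrees are cofinal in $\Theta$ while $\{A:A\le_w\B\}$ is bounded in Wadge rank, some level lies above $\B$; let $\alpha$ be the least such. By construction $\B\le_w S_\alpha$ and $\B\le_w P_\alpha$, so for any $\A\in\Gamma_\mathbf{d}$ we have $\A\le_w\B\le_w S_\alpha$ and $\A\le_w\B\le_w P_\alpha$, whence $\Gamma_\mathbf{d}\subseteq\Sigma_\alpha^\pthree\cap\Pi_\alpha^\pthree$, which is the upper inclusion.

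For the lower inclusion it suffices to show that every $\Delta_\alpha$ set $S$ satisfies $S\le_w\B$; indeed any $\A\in\Delta_\alpha^\pthree$ has $\A\le_w S\le_w\B$ for such an $S$, so $\A\in\Gamma_\mathbf{d}$. Fix a $\Delta_\alpha$ set $S$. It is self-dual, so $S\equiv_w\neg S$, and Fact~\ref{fact:Wadge-SLO} gives either $\neg S\le_w\B$ — in which case $S\le_w\B$ and we are done — or $\B\le_w S$. The whole difficulty is to rule out the latter, and this is where properness must be used decisively.

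So suppose toward a contradiction that $\B\le_w S$. Since $S$ is self-dual it is $\sigma$-join-reducible by Fact~\ref{fact:sjr-equal-sd}, so by Fact~\ref{fact:sji-deg-pres} we may write $S\equiv_w\bigoplus_{n}S_n$ with each $S_n$ $\sigma$-join-irreducible (hence non-self-dual, again by Fact~\ref{fact:sjr-equal-sd}) and $S_n<_w S$. As $S\in\Delta_\alpha$ we have $S<_w S_\alpha$, so each non-self-dual $S_n$ sits at a level $\beta_n<\alpha$; by the minimality of $\alpha$ this level is below $\B$, i.e.\ $S_n\le_w\B$. Here I use that the $S_n$ are $2$-valued: a witness $\theta_n$ for $S_n\le_w\B$ satisfies $S_n(Y)\le_{\three}\B(\theta_n(Y))$, and since $S_n(Y)\in\{0,1\}$ is never $\bot$, this forces the equality $\B(\theta_n(Y))=S_n(Y)$. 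Gluing the $\theta_n$ into the continuous map $\Theta(n\fr Y)=\theta_n(Y)$ then yields $(\bigoplus_n S_n)(n\fr Y)=S_n(Y)=\B(\Theta(n\fr Y))$, so $\bigoplus_n S_n\le_w\B$ and hence $S\le_w\B$. Together with $\B\le_w S$ this gives $\B\equiv_w S$ with $S$ a $2$-valued function, contradicting the properness of $\mathbf{d}$. Therefore $\B\le_w S$ fails, $S\le_w\B$ holds, and $\Delta_\alpha^\pthree\subseteq\Gamma_\mathbf{d}$.

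The main obstacle is exactly the step just described: handling the self-dual $\Delta_\alpha$ sets, for which the naive SLO argument leaves open the possibility $\B\le_w S$. The key is to exploit that self-duality forces a countable decomposition into strictly simpler, necessarily non-self-dual pieces, each of which the minimality of $\alpha$ places below $\B$; the two-valuedness of those pieces upgrades ``$\le_w$'' to genuine equalities that can be glued, and the resulting reduction $S\le_w\B$ collides with properness. I would treat the degenerate small-$\alpha$ cases, where there are no non-self-dual levels below $\alpha$, separately, but they are immediate.
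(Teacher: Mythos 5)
Your proof uses the same two key ingredients as the paper's: Fact \ref{fact:Wadge-SLO} applied twice together with properness to sandwich $\Gamma_\mathbf{d}$ between consecutive $2$-valued levels, and the $\sigma$-join decomposition (Facts \ref{fact:sjr-equal-sd} and \ref{fact:sji-deg-pres}) followed by gluing of the countably many reductions to get the lower inclusion. The organization differs in two ways that are actually improvements. First, you justify the existence of $\alpha$ (via boundedness of the $2$-valued sets below $\B$), whereas the paper simply takes ``the least $\alpha$ with $\Gamma_\mathbf{d}\subseteq\Sigma^\pthree_\alpha\cap\Pi^\pthree_\alpha$'' without comment; your boundedness assertion does deserve its one-line proof, namely: if $A$ is $2$-valued and $A\leq_w\B$ via $\theta$, then $A$ never takes value $\bot$, so the reduction forces $A=\theta^{-1}[\{X:\B(X)=1\}]$, hence all such $A$ are Wadge reducible to the fixed set $\{X:\B(X)=1\}$ and their ranks are bounded below $\Theta$. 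Second, by decomposing each self-dual member of $\Delta_\alpha$ individually, you avoid the paper's case distinction on the cofinality of $\alpha$: the paper must invoke Fact \ref{fact:complete-cofinality}, treating uncountable cofinality via $\Delta_\alpha=\bigcup_{\beta<\alpha}\Sigma_\beta$ and countable cofinality via a $\Delta_\alpha$-complete set, while your argument is uniform in $\alpha$.

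There is, however, one false step: ``Fix a $\Delta_\alpha$ set $S$. It is self-dual.'' This is not true in general, since $\Delta_\alpha=\Sigma_\alpha\cap\Pi_\alpha$ contains every set of every level $\beta<\alpha$, in particular the non-self-dual $\Sigma_\beta$- and $\Pi_\beta$-complete sets. Fortunately the missing case is covered by machinery you have already set up: if $S\in\Delta_\alpha$ is non-self-dual, its degree is that of some $S_\beta$ or $P_\beta$, and necessarily $\beta<\alpha$ (if $\beta$ were $\geq\alpha$, then $S_\alpha\leq_w S\leq_w P_\alpha$ would contradict non-self-duality of the pair at level $\alpha$), so $S\leq_w\B$ by your dichotomy and the minimality of $\alpha$. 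With that case inserted, the proof is correct. Two cosmetic remarks: your contradiction detour in the self-dual case is unnecessary, since the decomposition argument proves $S\leq_w\B$ outright without assuming $\B\leq_w S$; and the ``a priori'' comparability you claim before invoking properness is not literally what Fact \ref{fact:Wadge-SLO} gives (it yields the mixed disjunctions $\B\leq_w S$ or $\neg S\leq_w\B$, and $\B\leq_w\neg S$ or $S\leq_w\B$), though your end conclusion after properness collapses the mixed cases is exactly right.
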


\begin{proof}
Let $\alpha<\Theta$ be the least ordinal such that $\Gamma_\mathbf{d}\subseteq\Sigma^\pthree_\alpha\cap\Pi^\pthree_\alpha$.
Then, $\Gamma_\mathbf{d}\not\subseteq\Sigma^\pthree_\beta$ or $\Gamma_\mathbf{d}\not\subseteq\Pi^\pthree_\beta$ for any $\beta<\alpha$.
Let $\A$ be a $\Gamma_\mathbf{d}$-complete function, and $B_0,B_1\subseteq\om^\om$ be $\Sigma_\beta$- and $\Pi_\beta$-complete sets, respectively.
Then, $\A\not\leq_wB_i$ for some $i<2$.
By Fact \ref{fact:Wadge-SLO}, we have $B_{1-i}\equiv_w\neg B_i\leq_w\A$.
Since $\mathbf{d}$ is a proper $\three$-Wadge degree, $\A\not\equiv_wB_{1-i}$, and therefore, we also have $\A\not\leq_wB_{1-i}$.
Again, by Fact \ref{fact:Wadge-SLO}, we get $B_i\leq_w\neg B_{1-i}\leq_w\A$.
Therefore, $B_0,B_1\leq_wA$, and hence, we conclude that $\Sigma^\pthree_\beta\cup\Pi^\pthree_\beta\subseteq\Gamma_\mathbf{d}$.

If the cofinality of $\alpha$ is uncountable, then $\Delta_\alpha=\bigcup_{\beta<\alpha}\Sigma_\beta$ since there is no $\Delta_\alpha$-complete set by Fact \ref{fact:complete-cofinality} (see also \cite{Wesep78}).
Therefore, $\Delta_\alpha\subseteq\Gamma_\mathbf{d}$ since $\Sigma_\beta\subseteq\Gamma_\mathbf{d}$ for any $\beta<\alpha$.
Thus, $\Delta_\alpha^\pthree\subseteq\Gamma_\mathbf{d}$.
Assume that the cofinality of $\alpha$ is countable.
Then, by Fact \ref{fact:complete-cofinality}, there is a $\Delta_\alpha$-complete set $C\subseteq\om^\om$.
Since $\Delta_\alpha$ is a selfdual pointclass, $C$ is $\sigma$-join-reducible by Fact \ref{fact:sjr-equal-sd}, and thus by Fact \ref{fact:sji-deg-pres}, one can assume that $C$ is of the form $\bigoplus_nC_n$, where for any $n\in\om$, there is $\beta<\alpha$ such that $C_n\in\Sigma_\beta\cup\Pi_\beta$.
If $\A$ is a $\Gamma_\mathbf{d}$-complete set, then $C_n\leq_w\A$ for any $n$ since $\Sigma_\beta\cup\Pi_\beta\subseteq\Gamma_\mathbf{d}$.
By combining these Wadge reductions, we obtain $\bigoplus_nC_n\leq_w\A$.
Thus, $\Delta_\alpha^\pthree\subseteq\Gamma_\mathbf{d}$.
\end{proof}

%

\subsection{The non-proper $m$-Wadge degrees}\label{sec:structure-non-proper-m-deg}

We say that {\em $\A\colon \om^\om\to\mathcal{Q}$ is $\mathcal{Q}$-$m$-Wadge reducible to $\B\colon \om^\om\to\mathcal{Q}$} ($\A\leq_{mw}\B$) if for any $n\in\om$, there are $m\in\om$ and a continuous function $\theta\colon \om^\om\to\om^\om$ such that for any $X\in\om^\om$,
\[\A(n\fr X)\leq_\mathcal{Q}\B(m\fr\theta(X)),\]

Note that $\theta$ may depend on $n$, and thus the above reduction involves countably many functions $(\theta_n)_{n\in\om}$ and $n\mapsto m$.
Clearly, this is an intermediate notion between Lipschitz reducibility and Wadge reducibility.
The game associated to $\mathcal{Q}$-$m$-Wadge reducibility has been studied by Kihara-Montalb\'an \cite[Section 4.2]{KMta} (which is a simpler version of Steel's degree invariant game \cite{Steel82}).

We say that $\A\colon \om^\om\to\mathcal{Q}$ is {\em $m$-$\sigma$-join-reducible} ($m$-$\sigma$-jr) if $\A\upto n<_w\A$ for any $n\in\om$.
Otherwise, we say that $\A$ is {\em $m$-$\sigma$-join-irreducible} ($m$-$\sigma$-ji).

\begin{lemma}\label{lem:m-deg-sigma-ji}
Assume that $\B\colon \om^\om\to\mathcal{Q}$ is $m$-$\sigma$-ji.
Then, for any $\A\colon \om^\om\to\mathcal{Q}$,
\[\A\leq_{mw}\B\iff \A\leq_w\B.\]
\end{lemma}

\begin{proof}
It is clear that $\A\leq_{mw}\B$ implies $\A\leq_w\B$.
For the reverse implication, since $\B$ is $m$-$\sigma$-ji, there is $m$ such that $\A\leq_w\B\leq_w\B\upto m$.
Let $\theta$ witness $\A\leq_w\B\upto m$.
Then, we have $\A(n\fr X)\leq_\Q\B(m\fr\theta(n\fr X))$.
This clearly implies that $\A\leq_{mw}\B$.
\end{proof}

As $\Sigma_\alpha$ and $\Pi_\alpha$ are non-self-dual, by Fact \ref{fact:sjr-equal-sd}, $\Sigma_\alpha$- and $\Pi_\alpha$-complete sets are $\sigma$-ji.
Thus, by Lemma \ref{lem:m-deg-sigma-ji}, $\Sigma^\pthree_\alpha$- and $\Pi^\pthree_\alpha$-complete functions are also complete w.r.t.\ $m$-Wadge reducibility.

\begin{lemma}\label{lem:m-deg-delta-alpha}
Let $\alpha<\Theta$ be an ordinal of countable cofinality. 
Then, there are exactly two $\three$-$m$-Wadge degrees of the $\Delta^\pthree_\alpha$-complete functions.
\end{lemma}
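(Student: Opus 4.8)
The plan is to fix a $\Delta_\alpha$-complete set $C\subseteq\om^\om$, which exists by Fact~\ref{fact:complete-cofinality} since $\alpha$ has countable cofinality, and to first reduce the statement to a count of $m$-Wadge degrees inside a single Wadge degree. Indeed, a $\three$-valued function $\A$ is $\Delta^\pthree_\alpha$-complete if and only if $\A\equiv_w C$: membership in $\Delta^\pthree_\alpha$ gives $\A\leq_w C$, while completeness together with $C\in\Delta^\pthree_\alpha$ gives $C\leq_w\A$. I will then split these functions by the dichotomy between $m$-$\sigma$-ji and $m$-$\sigma$-jr, and show that each side is exactly one $m$-Wadge degree and that the two degrees are distinct; since every function falls on exactly one side, this yields the count of two.

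The $m$-$\sigma$-ji side is immediate from Lemma~\ref{lem:m-deg-sigma-ji}: if $\A,\A'$ are both $m$-$\sigma$-ji and $\equiv_w C$, then $\A\leq_w\A'$ upgrades to $\A\leq_{mw}\A'$ (and symmetrically), so they share one $m$-degree. Both sides are nonempty: writing $C\equiv_w\bigoplus_n C_n$ with $C_n<_w C$ via Facts~\ref{fact:sjr-equal-sd} and~\ref{fact:sji-deg-pres} (here self-duality of the $\Delta_\alpha$-complete $C$ is used), the function $\bigoplus_n C_n$ satisfies $(\bigoplus_n C_n)\upto n=C_n<_w C$ and so is $m$-$\sigma$-jr, whereas placing a full copy on one branch, say $\A\upto 0\equiv_w C$, yields an $m$-$\sigma$-ji complete function. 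Distinctness, and in fact $m$-$\sigma$-jr $<_{mw}$ $m$-$\sigma$-ji, follows by noting that an $m$-$\sigma$-ji $\A$ has a branch $\A\upto n_0\equiv_w C$, which cannot reduce into any branch $\B\upto m<_w C$ of an $m$-$\sigma$-jr $\B$, while the reverse reduction is again handed to us by Lemma~\ref{lem:m-deg-sigma-ji}.

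The main obstacle is to show that any two $m$-$\sigma$-jr complete functions $\A,\B$ are $m$-equivalent; by symmetry it suffices to prove $\A\leq_{mw}\B$, i.e.\ that for each $n$ some branch satisfies $\A\upto n\leq_w\B\upto m$. I plan to argue by contradiction: if $\A\upto n\not\leq_w\B\upto m$ for every $m$, then the semi-linear ordering principle for $\three$-valued functions (Fact~\ref{fact:Wadge-SLO}) gives $\neg(\B\upto m)\leq_w\A\upto n$ for all $m$. Since $\neg$ is an order-automorphism of Plotkin's domain $\three$ (it transposes $0,1$ and fixes $\bot$, hence preserves $\leq$), reductions are preserved under $\neg$, so $\B\upto m\leq_w\neg(\A\upto n)$ for every $m$; combining these single-target reductions branch-by-branch yields $C\equiv_w\B=\bigoplus_m(\B\upto m)\leq_w\neg(\A\upto n)$. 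But $\A\upto n<_w C$ forces $\neg(\A\upto n)<_w C$ as well, because $C$ is self-dual and $\neg$ preserves Wadge rank, giving the contradiction $C\leq_w\neg(\A\upto n)<_w C$. The delicate points to get right are that $\neg$ genuinely acts on $\leq_w$ because it is an automorphism of $(\three,\leq)$, and that the single target $\neg(\A\upto n)$ absorbs the countable join $\bigoplus_m(\B\upto m)$ precisely because it dominates each summand; modulo these, $\A\leq_{mw}\B$ and the symmetric reduction show that the $m$-$\sigma$-jr complete functions constitute the second, and final, $m$-Wadge degree.
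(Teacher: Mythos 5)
Your proof is correct, and the part that carries the real weight---showing that any two $m$-$\sigma$-jr $\Delta^\pthree_\alpha$-complete functions are $\equiv_{mw}$-equivalent---follows a genuinely different route from the paper's. The paper argues by cases on $\alpha$: for $\alpha=\beta+1$ it invokes Lemma \ref{lem:insert} to show that every branch of an $m$-$\sigma$-jr complete function lies in $\Sigma^\pthree_\beta\cup\Pi^\pthree_\beta$ while some branches are $\Sigma^\pthree_\beta$- and $\Pi^\pthree_\beta$-complete, and for limit $\alpha$ it uses that the branches of one function are cofinal among the $\Sigma^\pthree_\gamma$ for $\gamma<\alpha$; the branch-to-branch reductions are then read off from this classification. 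You bypass all of that with a single argument uniform in $\alpha$: if some branch $\A\upto n$ reduced to no branch of $\B$, then Fact \ref{fact:Wadge-SLO} applied for every $m$, together with the observation that $\neg$ is an order-automorphism of $(\three,\leq)$ so that reductions dualize, packs the whole of $\B=\bigoplus_m(\B\upto m)$ below the single function $\neg(\A\upto n)$, and then self-duality of the $\Delta_\alpha$-complete set $C$ gives the contradiction $C\leq_w\neg(\A\upto n)<_w C$. What your route buys is brevity and uniformity: no successor/limit dichotomy and no appeal to the theory of proper $\three$-Wadge degrees (Lemma \ref{lem:insert}). What the paper's route buys is structural information---an explicit identification of what the branches of an $m$-$\sigma$-jr complete function must be---which is the kind of by-product reused later, e.g.\ in Corollary \ref{cor:Wadge-Bourgain}. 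Two steps of yours deserve flagging, though neither is a gap: gluing countably many reductions $\B\upto m\leq_w\neg(\A\upto n)$ into one requires countable choice for reals, which holds in the paper's ${\sf ZF}+{\sf DC}+{\sf AD}$ setting (the paper performs the same gluing in Lemma \ref{lem:insert}); and the self-duality of $C$ that you invoke is exactly Fact \ref{fact:sjr-equal-sd} applied to the self-dual pointclass $\Delta_\alpha$. Your remaining steps---one $m$-degree of $m$-$\sigma$-ji complete functions via Lemma \ref{lem:m-deg-sigma-ji}, the two witnesses, and the strict comparison between the two degrees---coincide with the paper's.
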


\begin{proof}
As a $\Delta_\alpha$-complete set is $\sigma$-jr, by Fact \ref{fact:sji-deg-pres}, it is Wadge equivalent to a set of the form $\A=\bigoplus_n\A_n$, which is clearly $m$-$\sigma$-jr, and $\Delta^\pthree_\alpha$-complete.
Then, it is also clear that $0\fr \A=\{0\fr X:X\in \A\}$ is an $m$-$\sigma$-ji $\Delta^\pthree_\alpha$-complete set.
Obviously, $\A<_{mw}0\fr \A$.
Now, $\Delta^\pthree_\alpha$-complete functions split into $m$-$\sigma$-jr ones and $m$-$\sigma$-ji ones.
By Lemma \ref{lem:m-deg-sigma-ji}, if $\A$ and $\B$ are $m$-$\sigma$-ji $\Delta^\pthree_\alpha$-complete functions, then $\A\equiv_{mw}\B$.

Assume that $\A$ and $\B$ are $m$-$\sigma$-jr $\Delta^\pthree_\alpha$-complete functions.
First consider the case that $\alpha$ is a successor ordinal, $\alpha=\beta+1$ say.
By our assumption, we have $\A=\bigoplus_n(\A\upto n)$ where $\A\upto n<_w\A$, which implies that $\A\upto n$ is not $\Delta^\pthree_\alpha$-complete.
If $\A\upto n$ has a proper $\three$-Wadge degree $\mathbf{d}$, then as $\Delta^\pthree_\alpha\not\subseteq\Gamma_\mathbf{d}$, by Lemma \ref{lem:insert}, we have $\A\upto n\in\Gamma_\mathbf{d}\subseteq\Sigma^\pthree_\beta\cap\Pi^\pthree_\beta$.
The join of such functions is also in $\Sigma^\pthree_\beta\cap\Pi^\pthree_\beta$.
This concludes that $\A$ is Wadge equivalent to the join of functions of non-proper $\three$-Wadge degrees; that is, there are $m,n\in\om$ such that $\A\upto m$ and $\A\upto n$ are $\Sigma_\beta^\pthree$- and $\Pi^\pthree_\beta$-complete, respectively.
Again, by Lemma \ref{lem:insert}, for any $n\in\om$, $\B\upto n$ is either $\Sigma^\pthree_\beta$ or $\Pi^\pthree_\beta$ since $\B$ is $m$-$\sigma$-jr.
This shows that $\B\leq_{mw}\A$.
By a symmetric argument, we also have $\A\leq_{mw}\B$, and conclude that $\A\equiv_{mw}\B$.
For a limit ordinal $\alpha$, for any $n\in\om$, $\B\upto n$ is in $\Sigma^\pthree_\beta$ for some $\beta<\alpha$ since $\B$ is m-$\sigma$-jr.
Then there are $m\in\om$ and $\gamma$ such that $\beta\leq\gamma<\alpha$ and every $\Sigma^\pthree_\gamma$ function is $\three$-Wadge reducible to $\A\upto m$ since $\alpha$ is limit and $\A=\bigoplus_n(\A\upto n)$ is $\Delta_\alpha^\pthree$-complete.
Hence, $\B\upto n\leq_w\A\upto m$, which implies that $\B\leq_{mw}\A$.
By a symmetric argument, we also have $\A\leq_{mw}\B$, and conclude that $\A\equiv_{mw}\B$.
\end{proof}

\begin{figure}[t]
	\centering
	\includegraphics{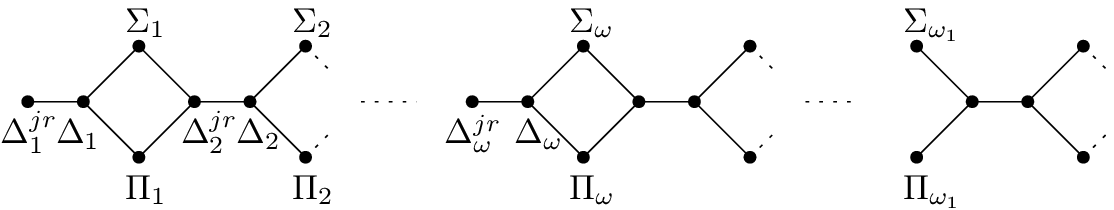}
	\caption{\small The structure of $m$-Wadge degrees of subsets of $\om^\om$ (The structure of {\em non-proper} $m$-Wadge degrees of $\mathbf{2}_\bot$-valued functions of $\om^\om$)}
	\label{fig:1}
\end{figure}

Lemmas \ref{lem:m-deg-sigma-ji} and \ref{lem:m-deg-delta-alpha} give the complete description of the {\em non-proper} $\three$-$m$-Wadge degree structure, hence the $m$-Wadge degree structure of subsets of $\om^\om$.
Each selfdual Wadge degree splits into two degrees (which are linearly ordered), and nonselfdual Wadge degrees remain the same.
See Figure \ref{fig:1}, where $\Delta^{jr}_\alpha$ denotes the class of all sets $m$-Wadge reducible to an $m$-$\sigma$-jr $\Delta_\alpha$ set.

\section{Many-one reducibility for real-valued functions}\label{sec:3}

\subsection{Reducibility for real-valued functions}

Day-Downey-Westrick \cite{DDW} introduced the notion of $\mathbf{m}$-reducibility for real-valued functions.
Let $[\mathbb{Q}]^2$ be the set of all pairs $(p,q)$ of rationals such that $p<q$.
For $f\colon \om^\om\to\mathbb{R}$, we define $\sep{f}\colon [\mathbb{Q}]^2\times\om^\om\to\three$ as follows.
\[
\sep{f}(\langle p,q\rangle\fr X)=
\begin{cases}
0&\mbox{ if }f(X)\leq p,\\
1&\mbox{ if }q\leq f(X),\\
\bot&\mbox{ if }p<f(X)<q.\\
\end{cases}
\]

For $f,g\colon \om^\om\to\mathbb{R}$, we say that {\em $f$ is $\mathbf{m}$-reducible to $g$} (written $f\leq_\mathbf{m}g$) if for any pair of rationals $p<q$, there are a pair of rationals $r<s$ and a continuous function $\theta\colon \om^\om\to\om^\om$ such that for any $X\in\om^\om$,
\[\sep{f}(\langle p,q\rangle\fr X)\leq_{\three}\sep{g}(\langle r,s\rangle\fr\theta(X)).\]

We denote by $\{f\leq p\}$ and $\{f\geq q\}$ the upper and lower level sets $\{X:f(X)\leq p\}$ and $\{X:f(X)\geq q\}$, respectively.
We also define $\{f<p\}$ and $\{f>q\}$ in a similar manner.
In the context of Wadge's pair reducibility \cite[Section I.E]{Wadge83}, $f\leq_\mathbf{m}g$ if and only if for any $p<q$ there are $r<s$ such that $(\{f\leq p\},\{f\geq q\})$ is Wadge reducible to $(\{g\leq r\},\{g\geq s\})$.

\begin{obs}
The above definition of $\leq_\mathbf{m}$ coincides with Definition \ref{def:DDW-reducibi}.
\end{obs}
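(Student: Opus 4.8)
The plan is to fix $f$ and $g$ and verify that, for a single continuous $\theta$, the pointwise condition of the new definition is logically equivalent to the pair of implications in Definition \ref{def:DDW-reducibi}, once the two ways of parametrizing a rational open interval are matched up. Since both reducibilities have the same logical form -- for every target interval there exist a source interval and a continuous $\theta$ making a pointwise condition hold for all $X$ -- the equivalence of the relations follows at once from the equivalence of the pointwise conditions.

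First I would reconcile the parametrizations. Definition \ref{def:DDW-reducibi} specifies a target interval by a rational centre $p$ and a rational radius $\ep>0$, i.e.\ the open interval $(p-\ep,\,p+\ep)$, whereas the new definition specifies it by its rational endpoints $p<q$. These describe the same family of nondegenerate rational open intervals, via $p_0=(p+q)/2$ and $\ep=(q-p)/2$ (one pair being rational iff the other is), and likewise on the source side $(r,\delta)\leftrightarrow(r,s)$ with $r-\delta$ and $r+\delta$ as endpoints. Hence quantifying over rational pairs $(p,\ep)$ is the same as quantifying over rational pairs $p<q$, and similarly for $g$.

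Next I would unpack $\leq_{\three}$. By the definition of $\sep{f}$ and the order $\bot<0,1$ on $\three$ (so that $a\leq_{\three}b$ iff $a=\bot$ or $a=b$), the condition
\[
(\forall X)\;\sep{f}(\lrangle{p,q}\fr X)\leq_{\three}\sep{g}(\lrangle{r,s}\fr\theta(X))
\]
holds exactly when, for all $X$, both $f(X)\leq p\Rightarrow g(\theta(X))\leq r$ and $f(X)\geq q\Rightarrow g(\theta(X))\geq s$; the middle case $p<f(X)<q$ gives $\sep{f}=\bot$, which lies below everything and so imposes no constraint. Writing the source endpoints as $r=r_0-\delta$, $s=r_0+\delta$ and the target endpoints as $p=p_0-\ep$, $q=p_0+\ep$, these two implications are precisely the contrapositives of the two DDW implications $g(\theta(x))<r_0+\delta\Rightarrow f(x)<p_0+\ep$ and $g(\theta(x))>r_0-\delta\Rightarrow f(x)>p_0-\ep$. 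Thus, for a fixed $\theta$ and matched intervals, the new pointwise condition and the DDW condition are equivalent, and the two definitions coincide.

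The only point that needs care, and where the formulations look superficially different, is the handling of strict versus non-strict inequalities: $\sep{f}$ records the values $0$ and $1$ by the non-strict tests $f(X)\leq p$ and $f(X)\geq q$, whereas Definition \ref{def:DDW-reducibi} is phrased with strict $<$ and $>$. This is exactly reconciled by contraposition, which swaps $<$ with $\geq$ and $>$ with $\leq$, so no boundary value is lost or double-counted. I expect this bookkeeping, together with the routine reparametrization of intervals, to be the whole content of the verification; there is no deeper obstacle, since one and the same continuous reduction $\theta$ witnesses both conditions, and the argument is insensitive to whether the reduction is taken on $2^\om$ or on $\om^\om$.
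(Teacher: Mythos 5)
Your proof is correct and follows essentially the same route as the paper's (one-line) proof: the paper likewise observes that Definition \ref{def:DDW-reducibi} amounts, after contraposition, to requiring $(\{f\leq p-\ep\},\{f\geq p+\ep\})\leq_w(\{g\leq r-\delta\},\{g\geq r+\delta\})$ for all $p,\ep$, which matches the $\sep{f}$-formulation under the centre--radius versus endpoint reparametrization of rational intervals. Your write-up just makes explicit the bookkeeping (unpacking $\leq_{\three}$, matching strict and non-strict inequalities) that the paper leaves implicit.
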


\begin{proof}
It is clear that $f\leq_\mathbf{m}g$ in the sense of Definition \ref{def:DDW-reducibi} if and only if for any $p,\ep$ there are $r,\delta$ such that $(\{f\leq p-\ep\},\{f\geq p+\ep\})\leq_w(\{g\leq r-\delta\},\{g\geq r+\delta\})$.
\end{proof}


We often identify $[\mathbb{Q}]^2$ and $\om$ via a fixed bijection.
Under this identification, $\sep{f}$ is thought of as a function from $\om\times\om^\om$ to $\three$, and thus, the $\mathbf{m}$-degree structure embeds into the $\three$-$m$-Wadge degree structure, that is,

\begin{obs}
For a function $f\colon \om^\om\to\mathbb{R}$, $f\leq_\mathbf{m}g$ if and only if $\sep{f}\leq_{mw}\sep{g}$.
\qed
\end{obs}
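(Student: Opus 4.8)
The plan is to unwind both sides and observe that, under the fixed bijection identifying $[\mathbb{Q}]^2$ with $\om$, they express verbatim the same condition. First I would write out $f\leq_\mathbf{m}g$ in full: for every pair $\langle p,q\rangle\in[\mathbb{Q}]^2$ there exist a pair $\langle r,s\rangle\in[\mathbb{Q}]^2$ and a continuous $\theta\colon\om^\om\to\om^\om$ such that $\sep{f}(\langle p,q\rangle\fr X)\leq_\three\sep{g}(\langle r,s\rangle\fr\theta(X))$ for all $X\in\om^\om$. Then I would write out $\sep{f}\leq_{mw}\sep{g}$, where now $\sep{f}$ and $\sep{g}$ are regarded as $\three$-valued functions on $\om\times\om^\om$: for every $n\in\om$ there exist $m\in\om$ and a continuous $\theta$ such that $\sep{f}(n\fr X)\leq_\three\sep{g}(m\fr\theta(X))$ for all $X$.

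The remaining step is to align the two statements under the identification $[\mathbb{Q}]^2\cong\om$. The outer universal quantifier over the index $n$ (equivalently, over the pair $\langle p,q\rangle$) on the $m$-Wadge side corresponds exactly to the universal quantifier over $\langle p,q\rangle$ on the $\mathbf{m}$-reducibility side; the existential choice of the index $m$ together with the continuous map $\theta$ corresponds to the existential choice of $\langle r,s\rangle$ together with $\theta$; and the inner inequality between $\three$-values is literally identical in the two formulations. Hence the two conditions are term-for-term the same, and the biconditional follows by inspection.

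I do not expect any genuine obstacle. The only point worth flagging is that, in the definition of $\leq_{mw}$, the reduction $\theta$ is permitted to depend on the first coordinate $n$ (as emphasized in the remark following that definition, where the reduction is noted to comprise countably many maps $(\theta_n)_{n\in\om}$ together with the assignment $n\mapsto m$); this matches the fact that in the definition of $\leq_\mathbf{m}$ both $\langle r,s\rangle$ and $\theta$ are chosen only after $\langle p,q\rangle$ is given. Once this alignment of dependencies is noted, the equivalence is immediate, which is why the statement is recorded as an observation with no further argument required.
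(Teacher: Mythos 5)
Your proposal is correct and matches the paper exactly: the paper records this observation with no proof (just \qed), precisely because under the fixed bijection $[\mathbb{Q}]^2\cong\om$ the two definitions are term-for-term identical, which is what your unwinding shows. Your flag about $\theta$ being allowed to depend on $n$ (equivalently on $\langle p,q\rangle$) is the right subtlety to check, and it aligns as you say.
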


We will show that the map $f\mapsto\sep{f}$ induces an isomorphism between the $\mathbf{m}$-degrees on real-valued functions on $\om^\om$ and the $m$-Wadge degrees on nonempty proper subsets of $\om^\om$.

\begin{theorem}\label{thm:main-theorem1}
The map $f\mapsto {\rm Lev}_f$ induces an isomorphism between the quotients of $(\mathcal{F}(\om^\om,\mathbb{R}),\leq_\mathbf{m})$ and $(\mathcal{P}(\om^\om)\setminus\{\emptyset,\om^\om\},\leq_{mw})$.
\end{theorem}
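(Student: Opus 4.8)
The plan is to lean on the two observations immediately preceding the statement: the map $f\mapsto\sep{f}$ satisfies $f\leq_\bfm g\iff\sep{f}\leq_{mw}\sep{g}$, so it is already an order-embedding of the $\bfm$-degrees into the $\three$-$m$-Wadge degrees. Since Lemmas \ref{lem:m-deg-sigma-ji} and \ref{lem:m-deg-delta-alpha} identify the \emph{non-proper} $\three$-$m$-Wadge degrees with the $m$-Wadge degrees of subsets of $\om^\om$, the whole theorem reduces to one claim: the range of $f\mapsto\sep{f}$ is exactly the set of non-proper $\three$-$m$-Wadge degrees represented by a $\mathbf 2$-valued function $\chi_A$ with $A\neq\emptyset,\om^\om$. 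I would prove the two inclusions separately.

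For the inclusion ``$\supseteq$'' (surjectivity) I would send a subset back to a real function. For nonempty proper $A$, put $f=\chi_A$ (with values $0,1\in\mathbb R$); computing the slices $\sep{\chi_A}\upto\langle p,q\rangle$ shows that, apart from the constant slices $0,1,\bot$ and the two ``half'' functions, one slice is literally $\chi_A$ and Wadge-dominates all the others, so $\sep{\chi_A}\equiv_{mw}\chi_A$ and $\sep{\chi_A}$ is $m$-$\sigma$-ji. This hits every non-self-dual degree and the $m$-$\sigma$-ji half of each split self-dual degree. For the $m$-$\sigma$-jr half produced by Lemma \ref{lem:m-deg-delta-alpha}, I would instead take the $\sigma$-join-reducible normal form $A\equiv_w\bigoplus_nA_n$ of Fact \ref{fact:sji-deg-pres} and build $f$ so that its level sets are spread across the columns $n$ (coding $\chi_{A_n}$ into disjoint value-bands shrinking to a common point), forcing every slice of $\sep{f}$ to lie strictly below $\sep{f}$ and hence $\sep{f}$ to be $m$-$\sigma$-jr of the prescribed degree.

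For the inclusion ``$\subseteq$'' I must show that \emph{every} $\sep{f}$ has a non-proper degree, i.e. is $m$-Wadge equivalent to some $\chi_A$. Here I would work slice-by-slice. Each slice $\sep{f}\upto\langle p,q\rangle$ encodes the disjoint pair $(\{f\leq p\},\{f\geq q\})$, and the identity map gives $\sep{f}\upto\langle p,q\rangle\leq_w\chi_{\{f>p\}}$ and $\leq_w\chi_{\{f\geq q\}}$, so every slice is dominated by a $\mathbf 2$-valued one-sided level set. Fixing the least $\alpha$ with $\sep{f}\in\Sigma^\pthree_\alpha\cap\Pi^\pthree_\alpha$, the level sets feeding the maximal slices lie in $\Sigma_\alpha\cup\Pi_\alpha$; whenever both members of a pair lie in the class $\Pi_\alpha$ carrying the separation property (Fact \ref{fact:vanWesep-Steel}), that pair is separated by a $\Delta_\alpha$ set $E$, and since by Lemma \ref{lem:insert} a proper degree is never Wadge-below a $\Delta_\alpha$ set, the domination $\sep{f}\upto\langle p,q\rangle\leq_w\chi_E$ upgrades to an equivalence; the remaining pairs are one-sided and already equivalent to a $\Sigma_\alpha$- or $\Pi_\alpha$-complete set. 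As each slice is then Wadge equivalent to a genuine set, the column-wise character of $\leq_{mw}$ yields $\sep{f}\equiv_{mw}\chi_A$ for the $\mathbf 2$-valued function $A$ assembled from these separators.

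The main obstacle is precisely this last inclusion, and inside it the \emph{sidedness} bookkeeping: an individual slice $(\{f\leq p\},\{f\geq q\})$ can genuinely be a proper (inseparable) pair when both sets fall on the reduction side $\Sigma_\alpha$ rather than the separation side $\Pi_\alpha$. The fact that must be established is that such a proper slice is always absorbed in the $m$-Wadge join by a strictly larger \emph{one-sided} slice whose pair does lie in the separation class $\Pi_\alpha$; this is where the nestedness of the lower level sets $\{f\leq p\}$ — special to real-valued $f$ and false for arbitrary $\three$-valued functions — is used, and it is what forces the range to avoid every proper degree. Checking that the separators can be chosen uniformly enough that the assembled $\chi_A$ is $m$-Wadge equivalent to $\sep{f}$ on the nose, rather than merely sandwiching it, is the delicate point I would spend the most care on.
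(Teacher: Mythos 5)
Your skeleton (order-embedding via the observation $f\leq_\bfm g\iff\sep{f}\leq_{mw}\sep{g}$, then surjectivity onto the non-proper degrees, then exclusion of proper degrees) is the paper's, and your surjectivity constructions are essentially the paper's Lemma \ref{lem:non-proper-Wadge-for-reals}: characteristic functions for the $m$-$\sigma$-ji degrees, and a ``shrinking value-bands'' function for the $m$-$\sigma$-jr halves. (Two small slips there: $\sep{\chi_A}\equiv_{mw}\chi_A$ holds only when $A$ is $m$-$\sigma$-ji --- the top slice of $\sep{\chi_A}$ is all of $\chi_A$, so $\sep{\chi_A}$ is always $m$-$\sigma$-ji and cannot realize the $m$-$\sigma$-jr half; you only invoke it for the $m$-$\sigma$-ji half, so this is cosmetic. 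You also never rule out the degrees of $\emptyset$ and $\om^\om$, the paper's Lemma \ref{lem:non-proper-Wadge-for-reals}(6), which is easy.) The genuine gap is in your ``$\subseteq$'' inclusion, exactly where you say you would spend the most care, and your proposed route there does not work.

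Three concrete problems. First, your case analysis rests on the premise that the level sets $\{f\leq p\},\{f\geq q\}$ lie in $\Sigma_\alpha\cup\Pi_\alpha$. This is unjustified and false in general: $\sep{f}\in\Sigma^\pthree_\alpha\cap\Pi^\pthree_\alpha$ only sandwiches each level set between sets of those classes, and the level sets can be strictly more complex than every slice --- e.g.\ with $D_n$ decreasing, $D_n$ being $\PP{n}$-complete and $\bigcap_nD_n$ being $\PP{\om}$-complete, the function $f(X)=2^{-\min\{n:X\notin D_n\}}$ (and $f=0$ on the intersection) has every slice at a finite Borel level while $\{f\leq 0\}=\bigcap_nD_n$ is not. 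Second, the inference ``slice $\leq_w\chi_E$ with $E\in\Delta_\alpha$, and proper degrees are never below $\Delta_\alpha$ sets, hence the slice is \emph{equivalent} to a set'' is invalid: Lemma \ref{lem:insert} only forbids a proper degree from lying below a $\Delta_\alpha$ set \emph{at its own level} $\alpha$; a slice below a $\Delta_\alpha$ set can perfectly well be a proper inseparable pair at some level $\beta<\alpha$ (any $f_{A,B}$ with $A\cap B=\emptyset$ satisfies $f_{A,B}\leq_w\chi_B$ via the identity), and then it is equivalent to no set at all. Third, the fact you defer --- that a proper slice is absorbed by a strictly larger \emph{one-sided} slice lying in $\Pi_\alpha$ --- is left unproven, and it is not the mechanism that works. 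The paper's argument needs neither claim: given $p<q$, interpolate $p<r<s<q$; the reduction of the narrow slice $\langle p,r\rangle$ to a $\Sigma_\alpha$-complete set yields a $\Pi_\alpha$ envelope $A$ with $\{f\leq p\}\subseteq A\subseteq\{f<r\}$, the reduction of $\langle s,q\rangle$ to a $\Pi_\alpha$-complete set yields a $\Pi_\alpha$ envelope $B$ with $\{f\geq q\}\subseteq B\subseteq\{f>s\}$; the inequality $r<s$ (this is the one and only place the linear order of $\mathbb{R}$ enters) makes $A\cap B=\emptyset$, and Fact \ref{fact:vanWesep-Steel} applied to this disjoint $\Pi_\alpha$ pair gives a $\Delta_\alpha$ separator, so \emph{every} slice is in $\Delta^\pthree_\alpha$, with no case distinction on sides. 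Then $\sep{f}=\bigoplus_{p,q}\sep{f}\upto\langle p,q\rangle\in\Delta^\pthree_\alpha$ contradicts $\Gamma_\mathbf{d}$-completeness for a proper $\mathbf{d}$. Finally, the ``on the nose'' uniformity you worry about is bypassed entirely: once $\sep{f}$ is known to have a non-proper Wadge degree, Lemmas \ref{lem:m-deg-sigma-ji} and \ref{lem:m-deg-delta-alpha} convert Wadge equivalence to the required $m$-Wadge equivalence with a set, degree by degree, rather than by assembling separators into a single reduction.
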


The proof of Theorem \ref{thm:main-theorem1} will be given in the rest of this section.
As a consequence of Theorem \ref{thm:main-theorem1}, the structure of $\mathbf{m}$-degrees of real-valued functions on $\om^\om$ looks like Figure \ref{fig:1}.
We will also show that the structure of $\mathbf{m}$-degrees of real-valued functions on $2^\om$ looks like Figure \ref{fig:2}.
That is, it is almost isomorphic to the $m$-Wadge degrees of nonempty proper subsets of $\om^\om$ except that if $\alpha$ is a limit ordinal of countable cofinality, the $m$-Wadge degree of an $m$-$\sigma$-ji $\Delta_\alpha$-complete set cannot be realized.

\begin{figure}[t]
	\centering
	\includegraphics{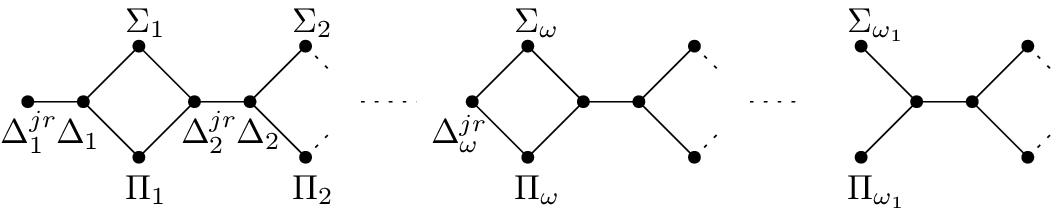}
	\caption{The structure of $m$-degrees of real-valued functions on $2^\om$}
	\label{fig:2}
\end{figure}

\subsection{Non-proper Wadge degrees}

We first characterize non-proper $\three$-Wadge degrees realized as real-valued functions.

\begin{lemma}\label{lem:non-proper-Wadge-for-reals}~
\begin{enumerate}
\item For any nonzero ordinal $\alpha<\Theta$, there are functions $f,g\colon 2^\om\to\{0,1\}$ such that $\sep{f}$ and $\sep{g}$ are $\Sigma_\alpha^\pthree$- and $\Pi_\alpha^\pthree$-complete, respectively.
\item For any nonzero ordinal $\alpha<\Theta$ of countable cofinality, there is a function $f\colon 2^\om\to[0,1]$ such that $\sep{f}$ is $m$-$\sigma$-jr and $\Delta_\alpha^\pthree$-complete.
\item For any successor ordinal $\alpha<\Theta$, there is a function $f\colon 2^\om\to\{0,1\}$ such that $\sep{f}$ is $m$-$\sigma$-ji and $\Delta_\alpha^\pthree$-complete.
\item For any limit ordinal $\alpha<\Theta$ of countable cofinality, there is a function $f\colon \om\times 2^\om\to\{0,1\}$ such that $\sep{f}$ is $m$-$\sigma$-ji and $\Delta_\alpha^\pthree$-complete.
\item For any limit ordinal $\alpha<\Theta$ of countable cofinality, there is no function $f\colon 2^\om\to\mathbb{R}$ such that $\sep{f}$ is $m$-$\sigma$-ji and $\Delta_\alpha^\pthree$-complete.
\item There is no $f\colon \om^\om\to\mathbb{R}$ such that $\sep{f}\equiv_w\emptyset$ or $\sep{f}\equiv_w\om^\om$.
\end{enumerate}
\end{lemma}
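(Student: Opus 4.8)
The plan is to realize each of the degrees in (1)--(4) by an explicit $f$, and to rule out (5) and (6) structurally. The computational backbone is one observation I would record first: for \emph{any} $T\colon\om^\om\to\three$ the identity map witnesses $T\leq_w\chi_{T^{-1}(1)}$ (if $T(X)=1$ then $X\in T^{-1}(1)$; if $T(X)=0$ then $X\notin T^{-1}(1)$; and $\bot\leq_\three$ anything), and dually $T$ reduces to the characteristic function of $\neg T^{-1}(0)$. Hence the Wadge rank of a $\three$-valued function never exceeds the ranks of its level sets $T^{-1}(1)$ and $T^{-1}(0)$. In particular, for a $\{0,1\}$-valued $f$ one gets $\sep{f}\equiv_w\chi_{f^{-1}(1)}$: every section reduces to $\chi_{f^{-1}(1)}$ by the bound, while the section $\sep{f}\upto\langle p,q\rangle$ for $0\leq p<q\leq 1$ equals $\chi_{f^{-1}(1)}$ on the nose.

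For (1), (3), (4) I would take $f=\chi_A$ for a suitable complete $A$. For (1), $\Sigma_\alpha$- and $\Pi_\alpha$-complete sets are non-self-dual, so by Lemma \ref{lem:nsd-complete} they are realized on $2^\om$, and then $\sep{f}\equiv_w A$ is $\Sigma^\pthree_\alpha$- resp.\ $\Pi^\pthree_\alpha$-complete. For (3), a successor $\Delta_\alpha$-complete set is self-dual and is realized on $2^\om$ as a two-fold join $A_0\oplus A_1$ of a lower non-self-dual pair placed on the clopen pieces $[0]$ and $[1]$ (only two pieces, hence no accumulation); since the section $\sep{f}\upto\langle 0,1\rangle=\chi_A\equiv_w\sep{f}$ is not strictly below $\sep{f}$, the latter is $m$-$\sigma$-ji. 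For (4), the key is that $\om\times 2^\om$ carries an honest countable clopen partition into the copies $\{n\}\times 2^\om$ which does \emph{not} accumulate, so placing a $\Sigma_{\beta_n}$-complete $A_n$ (with $\beta_n\nearrow\alpha$) on the $n$-th copy makes $A=\bigoplus_nA_n$ genuinely $\Delta_\alpha$-complete; again $\sep{f}\equiv_w A$ and its $\langle 0,1\rangle$-section is all of it, so $\sep{f}$ is $m$-$\sigma$-ji.

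For (2) I would build $f\colon 2^\om\to[0,1]$ by stacking. Fix heights $1=h_0>h_1>\cdots\to 0$ and a $\sigma$-join-reducible $\Delta_\alpha$-complete $C\equiv_w\bigoplus_nC_n$ with $C_n<_wC$, and set $f(0^n1\fr Y)=h_n$ if $Y\in C_n$ and $h_{n+1}$ otherwise, with $f(0^\infty)=0$. For a pair $(p,q)$ inside a single gap $(h_{n+1},h_n)$ the $\bot$-region is empty and the section is $\equiv_w\chi_{C_n}$, so each $C_n$ occurs as a section and $C\leq_w\sep{f}$. For an arbitrary pair, $\{f\leq p\}$ is a clopen tail together with one boundary copy of some $\neg C_k$ (finite $k$), hence $\{f\leq p\}\equiv_w\neg C_k$, and dually $\{f\geq q\}\equiv_w C_j$; by the first observation the whole section then has rank at most $\min(\mathrm{rank}\,C_j,\mathrm{rank}\,C_k)<\alpha$. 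Thus every section has rank $<\alpha$, so $\sep{f}\equiv_w C$ is $\Delta^\pthree_\alpha$-complete and $m$-$\sigma$-jr. The point to check carefully is that \emph{wide} gaps do not raise the rank, and this is exactly where $T\leq_w\chi_{T^{-1}(1)}$ is used, collapsing a two-sided section to the rank of one of its level sets.

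For (6), observe that for any $f$ and any $X$ one can pick rationals below $f(X)$ (forcing $\sep{f}$ to take the value $1$) and rationals above $f(X)$ (forcing the value $0$); so $\sep{f}$ attains both $0$ and $1$, whence it is $\leq_w$ neither the constant $0$ ($=\emptyset$) nor the constant $1$ ($=\om^\om$), and in particular $\sep{f}\not\equiv_w\emptyset,\om^\om$.

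The hard part is (5), the impossibility on the compact space $2^\om$. Suppose $f\colon2^\om\to\mathbb{R}$ had $\sep{f}$ both $\Delta^\pthree_\alpha$-complete and $m$-$\sigma$-ji with $\alpha$ a limit of cofinality $\om$. By $m$-$\sigma$-ji there is a pair with $S:=\sep{f}\upto\langle p,q\rangle\equiv_w\sep{f}$; since $\sep{f}$ is $\Delta^\pthree_\alpha$-complete it is Wadge-equivalent to the ($\Delta_\alpha$-complete, hence self-dual) set of rank $\alpha$, so $S$ is a self-dual $\three$-valued function of rank $\alpha$ living on $2^\om$. By Facts \ref{fact:sjr-equal-sd} and \ref{fact:sji-deg-pres} (which are degree-theoretic and transfer to $2^\om$), self-duality forces $\F(S)=\emptyset$, i.e.\ every $X\in2^\om$ has an $n$ with $S\upto[X\upto n]<_wS$. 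The clopen sets $[X\upto n]$ then cover $2^\om$, so by compactness finitely many of them, refined to a finite clopen partition $[\sigma_1],\dots,[\sigma_k]$, already cover, with each $S\upto[\sigma_i]<_wS$; by Lemma \ref{lem:insert} each such section lies in $\Sigma^\pthree_{\beta_i}\cup\Pi^\pthree_{\beta_i}$ for some $\beta_i<\alpha$. But then $S\equiv_w\bigoplus_{i\leq k}S\upto[\sigma_i]$ is a finite join of functions of rank $<\alpha$, and since $\alpha$ is a \emph{limit} such a finite join is still reducible to a set of rank $<\alpha$, so $\bigoplus_{i\leq k}S\upto[\sigma_i]<_wS$ --- contradicting $S\equiv_w\bigoplus_{i\leq k}S\upto[\sigma_i]$. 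The essential obstruction, and the place to be most careful, is precisely this interaction between compactness of $2^\om$ (forcing a finite subcover) and the limit character of $\alpha$ (preventing finite joins from reaching rank $\alpha$); for a successor $\alpha$ a finite join can climb exactly one level, which is why (3) succeeds while (5) fails.
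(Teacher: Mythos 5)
Your overall architecture is sound, and on items (1), (3), (4) and (6) it essentially matches the paper's proof: the paper realizes (1) by $\chi_A$ with the same constant-map handling of degenerate sections, (4) by $g(iX)=f_i(X)$ with the $\langle 0,1\rangle$-section witnessing $m$-$\sigma$-ji-ness, and (3) by reduction to the (4)-style join, treating $\alpha=1$ separately via a nonconstant continuous function (your two-block join degenerates gracefully there, since $\chi_C$ for clopen $C$ is such a function). Your opening observation that the identity witnesses $T\leq_w\chi_{T^{-1}(1)}$ (and dually $T\leq_w\chi_{\neg T^{-1}(0)}$) is correct and neatly packages the explicit reductions $\theta_{p,q}$ the paper writes out, though note it yields $\sep{f}\equiv_w\chi_{f^{-1}(1)}$ only when $f$ is onto $\{0,1\}$. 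In (2) you take a genuinely different route: the paper allots each block two dedicated heights $a_{2n+1}<a_{2n}$ and runs a case analysis on the position of $(p,q)$, whereas your interleaved single-height design makes in-gap sections two-valued and disposes of wide pairs by the level-set bound; a side benefit is that your presentation $C\equiv_w\bigoplus_n C_n$ uniformly covers successor $\alpha$, which the paper's written proof of (2) only details for limit $\alpha$. In (5) both arguments turn on compactness, but differently: the paper uses compactness of the \emph{image} of a reduction into a countable join on $\om^\om$ to confine it to finitely many columns, while you use compactness of the \emph{domain} via a cover by strictly-simpler cylinders.

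Two concrete gaps. First, your (2) fails at $\alpha=1$: there every $C_n<_wC$ is literally $\emptyset$ or $\om^\om$, so the claimed equivalence $\{f\geq q\}\equiv_w C_j$ breaks (a clopen head joined with a trivial copy is a nonempty proper clopen set of rank exactly $1$), and some in-gap section is then $\Delta_1^\pthree$-complete, making $\sep{f}$ $m$-$\sigma$-ji rather than $m$-$\sigma$-jr. The statement survives via a different witness: a \emph{constant} function works, since its sections are the two constant $\three$-valued functions whose join is $\Delta_1^\pthree$-complete; for $\alpha\geq 2$ your construction is fine once you note the $C_n$ can be taken nonempty and proper. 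Second, in (5) the step that self-duality forces $\F(S)=\emptyset$ is justified only by asserting that Facts \ref{fact:sjr-equal-sd} and \ref{fact:sji-deg-pres} ``transfer to $2^\om$'' --- but those facts are stated for Baire space, and the Cantor/Baire discrepancy is precisely what item (5) is about, so the transfer cannot simply be asserted. It is avoidable: since $S\equiv_w\bigoplus_n\A_n$ with $\A_n\in\Sigma_{\beta_n}^\pthree$, any reduction $\theta\colon 2^\om\to\om^\om$ witnessing this has locally constant first coordinate, so every $X\in 2^\om$ already has a cylinder $[X\upto n]$ on which $S$ reduces to a single $\A_m$ of rank below $\alpha$ --- no self-duality facts needed --- after which your finite-cover-plus-limit-rank finish goes through (and is then, in substance, the paper's compact-image argument). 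Alternatively, precomposing $S$ with a continuous retraction $r\colon\om^\om\to 2^\om$ lets you invoke the cited facts legitimately on $\om^\om$ and pull $\F(S\circ r)=\emptyset$ back to the cylinders of $2^\om$.
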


\begin{proof}
(1)
Let $A\subseteq 2^\om$ be a $\Sigma_\alpha$-complete set.
Such a set exists by Lemma \ref{lem:nsd-complete}.
Define $\chi_A\colon 2^\om\to\{0,1\}$ by $\chi_A(X)=1$ if $X\in A$; otherwise $\chi_A(X)=0$.
Clearly $A\leq_w\sep{A}:=\sep{{\chi_A}}$ since $A(X)=\sep{A}(\langle 0,1\rangle\fr X)$. 
For $\sep{A}\leq_wA$, since the Wadge rank of $A$ is nonzero, $\emptyset,\om^\om\leq_wA$, and therefore, there is $Y_j\in 2^\om$ such that $A(Y_j)=j$ for each $j\in\{0,1\}$.
Given $p,q$, if $p<0$, define $\theta_{p,q}(X)=Y_0$, and if $q>1$, define $\theta_{p,q}(X)=Y_1$.
If $0\leq p<q\leq 1$, then we have $A(X)=\sep{A}(\langle p,q\rangle\fr X)$, and thus define $\theta_{p,q}(X)=X$.
The function $\theta$ witnesses that $\sep{A}\leq_wA$.
Consequently, $\sep{A}$ is $\Sigma_\alpha^\pthree$-complete.
By a similar argument, one can construct $g$ such that $\sep{g}$ is $\Pi_\alpha^\pthree$-complete.

(2)
First assume that $\alpha$ is a limit ordinal, say $\alpha=\sup_i\beta_i$.
By (1), we have a function $f_i\colon 2^\om\to\{0,1\}$ such that $L_i:=\sep{f_i}$ is $\Sigma_{\beta_i}^\pthree$-complete for each $i<\om$.
Note that $L_i\leq_wL_i\upto\langle 0,1\rangle$.
Let $(a_n)_{n\in\om}$ be strictly decreasing sequence of positive reals.
We define $g(0^\om)=0$ and $g(0^n1X)=a_{2n+1-f_n(X)}$.
Let $A\subseteq\om^\om$ be a $\Delta_\alpha$-complete set of the form $\bigoplus_i A_i$ such that $A_i$ is $\Sigma_{\beta_i}$-complete.
Then, clearly $A$ is $m$-$\sigma$-jr.
We claim that $\sep{g}\equiv_{mw}A$.

For $A\leq_{mw}\sep{g}$, given $n$ we know that $A_n\leq_wL_{n}\upto\langle 0,1\rangle$ via some continuous function $\theta_n$.
Then, 
\[A_n(X)=i\;\Longrightarrow\;f_n(\theta_n(X))=i\;\Longrightarrow\;g(0^n1\theta_n(X))=a_{2n+1-i}.\]

Thus, $\langle n,X\rangle\mapsto 0^n1\theta_n(X)$ witnesses $A\upto\langle 0,1\rangle\leq_{w}\sep{g}\upto\langle a_{2n+1},a_{2n}\rangle$.
Hence, $A\leq_{mw}\sep{g}$.
To see $\sep{g}\leq_{mw}A$, let $p<q$ be rationals.
First consider the case that the open interval $(p,q)$ intersects with $(a_{2n+2},a_{2n+1})$ for some $n\in\om$.
By our definition of $g$, if $X$ extends $0^{n+1}$ then $g(X)\leq a_{2n+2}$, and if $X$ extends $0^m1$ for some $m\leq n$ then $g(X)\geq a_{2n+1}$.
Since the Wadge rank of $A$ is nonzero, $\emptyset,\om^\om\leq_wA$, and therefore, there is $Y_j$ such that $A(Y_j)=j$ for each $j<2$.
Define $\theta_{p,q}(X)=Y_0$ if $X$ extends $0^{n+1}$; otherwise $\theta_{p,q}(X)=Y_1$.
Then,
\begin{multline*}
\sep{g}(\langle p,q\rangle\fr X)=0\;\Longrightarrow\;g(X)\leq p<a_{2n+1}\;\Longrightarrow\;g(X)\leq a_{2n+2}\\
\Longrightarrow\;X\succ 0^{n+1}\;\Longrightarrow\;A\circ\theta_{p,q}(X)=A(Y_0)=0.
\end{multline*}
Similarly, $\sep{g}(\langle p,q\rangle\fr X)=1$ implies that $A\circ\theta_{p,q}(X)=A(Y_1)=1$.
This shows that $\sep{g}\upto\langle p,q\rangle\leq_{w}A\upto\langle 0,1\rangle$.

Now, assume that $(p,q)$ does not intersects with $(a_{2n+2},a_{2n+1})$ for any $n\in\om$.
If $a_0<p$ then $\sep{g}(\langle p,q\rangle\fr X)=0$ for any $X$.
If $q<0$, then $\sep{g}(\langle p,q\rangle\fr X)=1$ for any $X$.
In these cases, it is clear that $\sep{g}\upto\langle p,q\rangle\leq_{w}A\upto\langle 0,1\rangle$.

Otherwise, $a_1\leq p\leq a_0$ or $a_{2n+1}\leq p<q\leq a_{2n}$ for some $n\in\om$.
Assume that $a_{2n+1}\leq p<q\leq a_{2n}$.
If $X$ extends $0^m1$ for some $m<n$, then $g(X)\geq a_{2n-1}>p$, and thus define $\theta_{p,q}(X)=Y_1$.
If $X$ extends $0^{n+1}$, then $g(X)\leq a_{2n+2}<p$, and thus define $\theta_{p,q}(X)=Y_0$.
If $X$ is of the form $0^n1Z$, one can see that $\sep{g}(\langle p,q\rangle\fr X)=L_n(Z)$.
Then, define $\theta_{p,q}(X)=n\fr \tau_n(Z)$, where $\tau_n$ is a continuous function witnesses that $L_n\leq_wA_n$.
This shows that $\sep{g}\upto\langle p,q\rangle\leq_{mw}A\upto\langle 0,1\rangle$.
For $a_1\leq p\leq a_0$, a similar argument applies.
Thus, we conclude that $\sep{g}\equiv_{mw}A$, that is, $\sep{g}$ is $m$-$\sigma$-jr and $\Delta_\alpha^\pthree$-complete.

(4)
Assume that $\alpha=\sup_i\beta_i$.
By (1), we have a function $f_i\colon 2^\om\to\{0,1\}$ such that $L_i:=\sep{f_i}$ is $\Sigma_{\beta_i}$-complete for each $i<\om$.
Note that $L_i\leq_wL_i\upto\langle 0,1\rangle$.
Then, define $g(iX)=f_i(X)$.
It is clear that $\sep{g}$ is $\Delta_\alpha$-complete.
Moreover, one can see that $\sep{g}\leq_w\sep{g}\upto\langle 0,1\rangle$.
Hence, $\sep{g}$ is $m$-$\sigma$-ji.

(3)
A similar argument as in the item (4) also verifies the item (3) except for $\alpha=1$.
Thus, assume that $\alpha=1$, and let $f$ be a nonconstant continuous function.
We claim that $\sep{f}$ is $m$-$\sigma$-ji and $\Delta_1^\pthree$-complete.
Since $f$ is nonconstant, there are $X_0,X_1$ such that $f(X_0)<f(X_1)$.
Choose rationals $r<s$ such that $f(X_0)\leq r<s\leq f(X_1)$.
It suffices to show that $\sep{f}\upto\langle p,q\rangle\leq_w\sep{f}\upto\langle r,s\rangle$ for any $p<q$.
Since $f$ is continuous, $\{f\leq p\}$ and $\{f\geq q\}$ are both closed.
Since $\Pi^0_1$ has the separation property, there is a clopen set $C\subseteq\om^\om$ separating $\{f\leq p\}$ from $\{f\geq q\}$.
Define $\theta_{p,q}(X)=Y_0$ if $X\in C$; otherwise $\theta_{p,q}(X)=Y_1$.
Then, $\theta_{p,q}$ is continuous since $C$ is clopen.
It is easy to see that $\theta_{p,q}$ witnesses that $\sep{f}\upto\langle p,q\rangle\leq_w\sep{f}\upto\langle r,s\rangle$.

(5)
Assume that $\sep{f}$ is $m$-$\sigma$-ji and $\Delta_\alpha^\pthree$-complete.
Since $\sep{f}$ is $m$-$\sigma$-ji, there are $p,q$ such that $\sep{f}\upto\langle p,q\rangle$ is $\Delta_\alpha^\pthree$-complete.
However, since $\sep{f}\upto\langle p,q\rangle$ is a function on $2^\om$, and $\alpha$ is limit, it is impossible by compactness:
If $\A$ is $\Delta^\pthree_\alpha$-complete, then $\A$ is Wadge equivalent to a function of the form $\bigoplus_n\A_n$, where $\A_n\in\Sigma_{\beta_n}^\pthree$ for some $\beta_n<\alpha$.
Let $\theta\colon 2^\om\to\om^\om$ witness $\A\leq_w\bigoplus_n\A_n$.
As $\theta$ is continuous and $2^\om$ is compact, the image of $\theta$ is also compact.
Hence, there is $k\in\om$ such that $\theta$ witnesses $\A\leq_w\bigoplus_{n<k}\A_n$.
This implies that $\A$ is in $\Sigma^\pthree_\beta$, where $\beta=\max_{n<k}\beta_n<\alpha$, a contradiction.

(6)
Fix $X\in\om^\om$.
Then, there are $p,q$ such that $p<f(X)<q$.
Thus, $\sep{f}(\langle p-1,p\rangle\fr X)=1$ and $\sep{f}(\langle q,q+1\rangle\fr X)=0$.
Therefore, $\sep{f}$ is not Wadge reducible to constant functions such as (the characteristic functions of) $\emptyset$ and $\om^\om$.
\end{proof}

\subsection{Proper Wadge degrees}

We finally show that, as a consequence of the van Wesep-Steel Theorem (Fact \ref{fact:vanWesep-Steel}), proper Wadge degrees disappear in the $\mathbf{m}$-degrees of real-valued functions.

\begin{lemma}
For any proper $\three$-Wadge degree $\mathbf{d}$, there is no $f\colon \om^\om\to\mathbb{R}$ such that $\sep{f}$ is $\Gamma_\mathbf{d}$-complete.
\end{lemma}

\begin{proof}
%
Let $\alpha<\Theta$ be an ordinal in Lemma \ref{lem:insert}, that is, $\Delta_\alpha^\pthree\subseteq\Gamma_\mathbf{d}\subseteq\Sigma_\alpha^\pthree\cap\Pi_\alpha^\pthree$.
Assume that $\sep{f}\in\Gamma_\mathbf{d}$.
We claim that for any $p<q$, $\sep{f}\upto\langle p,q\rangle$ is $\Delta_\alpha^\pthree$.
Let $U,V\subseteq\om^\om$ be $\Sigma_\alpha$- and $\Pi_\alpha$-complete sets, respectively.
Choose rationals $p<r<s<q$.
Since $\sep{f}\in\Sigma_\alpha^\pthree\cap\Pi_\alpha^\pthree$, there are continuous functions $\tau_0,\tau_1$ such that
\[\sep{f}(\langle p,r\rangle\fr X)\leq_\three U\circ\tau_0(X)\mbox{, and }\sep{f}(\langle s,q\rangle\fr X)\leq_\three V\circ\tau_1(X).\]
Consider $A=\{X:U\circ\tau_0(X)=0\}$ and $B=\{X:V\circ\tau_1(X)=1\}$, that is, $A=\tau_0^{-1}[\om^\om\setminus U]$ and $B=\tau_1^{-1}[V]$.
Clearly, $A,B\in\Pi_\alpha$.
Moreover, 
\begin{align*}
&f(X)\leq p\;\Longrightarrow\;U\circ\tau_0(X)=0\iff X\in A\;\Longrightarrow\;f(X)<r\\
&f(X)\geq q\;\Longrightarrow\;V\circ\tau_1(X)=1\iff X\in B\;\Longrightarrow\;f(X)>s.
\end{align*}

This implies that $\{f\leq p\}\subseteq A$, $\{f\geq q\}\subseteq B$, and $A\cap B=\emptyset$. 
By the separation property of $\Pi_\alpha$ (Fact \ref{fact:vanWesep-Steel}), there is a $\Delta_\alpha$ set $C\subseteq\om^\om$ such that $C\subseteq A$ and $\om^\om\setminus C\subseteq B$.
Then,
\[\{f\leq p\}\subseteq C\mbox{, and }\{f\geq q\}\subseteq\om^\om\setminus C.\]

This shows that $\sep{f}\upto\langle p,q\rangle\leq_wC$, and therefore, $\sep{f}\upto\langle p,q\rangle$ is $\Delta_\alpha^\pthree$, which verifies our claim.
Consequently, $\sep{f}=\bigoplus_{p,q}\sep{f}\upto\langle p,q\rangle$ is also $\Delta_\alpha^\pthree$, and in particular, $\sep{f}$ cannot be $\Gamma_\mathbf{d}$-complete.
\end{proof}

This concludes the proof of Theorem \ref{thm:main-theorem1}.

\begin{proof}[Proof of Theorem \ref{thm:mthm1}]
For any Wadge degree $\mathbf{d}$ of a subset of $\om^\om$, $\mathbf{d}$ is the Wadge degree of a $\Sigma_\alpha$- or $\Pi_\alpha$-complete set for some $\alpha<\Theta$, or a $\Delta_\alpha$ set for some $\alpha<\Theta$ whose cofinality is countable by Fact \ref{fact:complete-cofinality}.
As seen in the last paragraph in Section \ref{sec:structure-non-proper-m-deg}, every selfdual Wadge degree splits into two $m$-Wadge degrees (which are linearly ordered), and nonselfdual Wadge degrees remains the same.
By Theorem \ref{thm:main-theorem1} and Lemma \ref{lem:non-proper-Wadge-for-reals} (5), we conclude that the structure of $\mathbf{m}$-degrees of real-valued functions on $2^\om$ looks like Figure \ref{fig:2} as desired.
\end{proof}

\section{The Bourgain rank}\label{sec:Bourgain-rank}

In this section, we will generalize Day-Downey-Westrick's result on the Bourgain rank $\alpha$ to an arbitrary Polish domain.
Pequignot \cite{Peq15} extended the notion of Wadge reducibility to subsets of second countable spaces based on the theory of admissible representation.
An {\em admissible representation} of a topological space $\mathcal{X}$ is a partial continuous surjection $\delta:\subseteq\om^\om\twoheadrightarrow\mathcal{X}$ which has the following universal property:
For any partial continuous function $f:\subseteq\om^\om\to\mathcal{X}$, there is a partial continuous function $\tau:\subseteq\om^\om\to\om^\om$ such that $f=\delta\circ\tau$.
An admissible representation is {\em open} if it is an open map, and {\em total} if its domain is $\om^\om$.
The following result seems folklore, but we present the proof for the sake of completeness.

\begin{lemma}\label{lem:total-open-admissible}
Every Polish space has a total open admissible representation.
\end{lemma}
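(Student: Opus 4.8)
The plan is to build the representation by hand from a complete metric, coding nested shrinking basic balls, and to force totality by a defaulting rule. Fix a complete compatible metric $d$ on $\mathcal{X}$ with $d\le 1$ and a countable dense sequence $(a_n)_{n\in\om}$, and let $\mathcal{B}$ be the countable family of open balls $B(a_n,r)$ with rational radius. Given $p\in\om^\om$, I read off a sequence of balls $U_0^p\supseteq U_1^p\supseteq\cdots$ in $\mathcal{B}$ as follows: $U_0^p=\mathcal{X}$, and at stage $k+1$ the entry $p(k)$ proposes a ball $V\in\mathcal{B}$; if $V$ is \emph{valid}, meaning $\overline{V}\subseteq U_k^p$ and $\operatorname{rad}(V)\le\operatorname{rad}(U_k^p)/2$, set $U_{k+1}^p=V$, and otherwise default to the concentric half-radius ball $U_{k+1}^p=B(\operatorname{center}(U_k^p),\operatorname{rad}(U_k^p)/2)$. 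In either case the balls are strictly nested with radii tending to $0$, so by completeness $\bigcap_k\overline{U_k^p}$ is a single point; define $\delta(p)$ to be that point. This makes $\operatorname{dom}(\delta)=\om^\om$ (totality). Continuity is immediate, since agreement of $p,p'$ on the first $k$ coordinates forces $\delta(p),\delta(p')\in\overline{U_k^p}$, a set of diameter $\le 2\operatorname{rad}(U_k^p)\to 0$. Surjectivity follows by shrinking $\mathcal{B}$-balls around a target $x$ fast enough that each proposed ball is valid.

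The payoff of the strict-nesting condition is a clean openness argument. For $\sigma\in\om^{<\om}$ the finite string deterministically fixes balls $U_0^\sigma\supseteq\cdots\supseteq U_{|\sigma|}^\sigma$ via the same validity/default rule. I claim $\delta([\sigma])=U_{|\sigma|}^\sigma$. The inclusion $\subseteq$ holds because every $p\in[\sigma]$ satisfies $\delta(p)\in\overline{U_{|\sigma|+1}^p}\subseteq U_{|\sigma|}^\sigma$; conversely, given any $y\in U_{|\sigma|}^\sigma$, density of $(a_n)$ and of the rationals yields a valid $\mathcal{B}$-ball $V\ni y$ with $\overline{V}\subseteq U_{|\sigma|}^\sigma$ and half the radius, and continuing to shrink validly around $y$ produces $p\in[\sigma]$ with $\delta(p)=y$. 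Since the cylinders $[\sigma]$ form a base of $\om^\om$ and $\delta$ commutes with unions, $\delta$ carries every open set to a union of basic balls, hence $\delta$ is an open map.

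It remains to verify the universal property (admissibility). Given a partial continuous $f\colon\subseteq\om^\om\to\mathcal{X}$, I construct a partial continuous $\tau$ with $\operatorname{dom}(\tau)=\operatorname{dom}(f)$ and $f=\delta\circ\tau$. For $p\in\operatorname{dom}(f)$, continuity of $f$ gives, for each $k$, an initial segment length $m_k$ (increasing) with $\operatorname{diam}\big(f([p\upto m_k]\cap\operatorname{dom}(f))\big)<2^{-(k+2)}$; using density I select a $\mathcal{B}$-ball $W_k^p$ of radius $\le 2^{-k}$ that contains this image and is nested below $W_{k-1}^p$, and let $\tau(p)$ code the sequence $(W_k^p)_k$. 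Because each $W_k^p$ depends only on $p\upto m_k$, the map $\tau$ is continuous; because $f(p)\in W_k^p$ for all $k$ with radii shrinking to $0$, the balls read off by $\delta$ converge to $f(p)$, giving $\delta(\tau(p))=f(p)$. The main obstacle is precisely this last step: arranging the committed balls $W_k^p$ to be simultaneously valid (strictly nested, radius-halving, so that $\delta$ reads them without triggering the default), to genuinely contain $f(p)$, and to be chosen from a finite initial segment of $p$ so that $\tau$ is continuous. All three constraints are satisfiable together because continuity of $f$ lets us commit to finite information while the slack between the diameter bound $2^{-(k+2)}$ and the ball radius $2^{-k}$ absorbs the approximation error; spelling out the bookkeeping that makes the $W_k^p$ nested and valid is the one genuinely delicate, though routine, computation.
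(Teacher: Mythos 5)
Your proof is correct, and the representation you build is essentially the same object as the paper's: your nested rational balls with closures contained in their predecessors and radii halving form exactly the kind of Suslin scheme that the paper imports from Kechris (Exercise 7.14), your defaulting rule playing the role that the scheme condition $U_s=\bigcup_n U_{s\fr n}$ plays there in forcing totality, and your openness argument via $\delta([\sigma])=U^\sigma_{|\sigma|}$ is the standard one. Where you genuinely diverge is the admissibility step. The paper never verifies the universal property against an arbitrary partial continuous $f$: it quotes the folklore fact that the neighborhood-filter representation $\delta^{\mathcal{B}}_{\rm nbhd}$ is admissible, so it only has to factor that single, well-structured representation through the Suslin one, and this factoring is a simple greedy search because an input to $\delta^{\mathcal{B}}_{\rm nbhd}$ already lists all basic neighborhoods of its value---no modulus-of-continuity extraction is needed. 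You instead prove the universal property directly, which amounts to inlining the proof of that folklore fact: the lengths $m_k$, the gap between the diameter bound $2^{-(k+2)}$ and the radius bound $2^{-k}$, and so on. Your route buys self-containedness (no appeal to Kechris's exercise nor to the admissibility of $\delta^{\mathcal{B}}_{\rm nbhd}$) at the cost of the bookkeeping you yourself flag as delicate; for the record, that bookkeeping does close: take $m_k$ minimal, take as center $c_k$ the least-index dense point within $2^{-(k+3)}$ of the image set $S_k$ (so that $W_k$ depends only on $p\upto m_k$, giving continuity of $\tau$), and take radii $r_k=2^{-(k-1)}$; then $S_k\subseteq W_k$, $r_k=r_{k-1}/2$ exactly, and the triangle inequality gives $d(c_{k-1},c_k)+r_k<r_{k-1}$, so every proposal is valid and the default never fires. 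Two minor points to tidy up: $U_0=\mathcal{X}$ is not literally a ball, so fix a convention (since $d\le 1$, treat $\mathcal{X}$ as the ball of radius $2$ about $a_0$) so that ``valid'' makes sense at the first step; and your claim that $W_k^p$ depends only on $p\upto m_k$ requires the canonical (least-index) choices just described---an arbitrary selection would not make $\tau$ well defined.
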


\begin{proof}
If $\mathcal{X}$ is Polish, as in Kechris \cite[Exercise 7.14]{Kec95}, one can construct a Suslin scheme $\mathcal{U}=(U_s)_{s\in\om^{<\om}}$ on $\mathcal{X}$ whose associate map is a total open continuous surjection $\delta_{\rm Sus}^\mathcal{U}\colon \om^\om\twoheadrightarrow\mathcal{X}$, where $\delta_{\rm Sus}^\mathcal{U}(p)$ is defined as a unique element in $\bigcap_nU_{p\upto n}$.
Here, this particular Suslin scheme satisfies $U_{\emptyset}=\mathcal{X}$ and $U_s=\bigcup_nU_{s\fr n}$.
Although it is not hard to check that $\delta_{\rm Sus}^\mathcal{U}$ is admissible, we here give the proof for the sake of completeness.

To see that $\delta_{\rm Sus}^\mathcal{U}$ is admissible, recall that the ``neighborhood filter'' representation is always admissible, that is, for any countable basis $\mathcal{B}=(B_e)_{e\in\om}$ of $\mathcal{X}$ with $B_0=\mathcal{X}$, consider $\mathcal{N}^\mathcal{B}_x=\{e\in\om:x\in B_e\}$, and then define $\delta^\mathcal{B}_{\rm nbhd}(p)=x$ if the range of $p$ is equal to $\mathcal{N}_x^\mathcal{B}$.
It is well-known that $\delta^\mathcal{B}_{\rm nbhd}$ is an admissible representation of $\mathcal{X}$ (cf.~\cite[Theorem 48]{deB13}).

It suffices to find a partial continuous function $\tau:\subseteq\om^\om\to\om^\om$ such that $\delta^\mathcal{B}_{\rm nbhd}=\delta^\mathcal{U}_{\rm Sus}\circ\tau$.
Given $p\in\om^\om$, wait for the first $t\in\om$ such that $B_{p(t)}\subseteq U_{\langle n\rangle}$ for some $n\in\om$.
If there are such $t$ and $n$, define $\tau(p)(0)=n$.
Assume that $s=\tau(p)\upto\ell$ has already been defined.
Then, wait for the first $t\in\om$ such that $B_{p(t)}\subseteq U_{s\fr n}$ for some $n\in\om$.
If there are such $t$ and $n$, define $\tau(p)(\ell)=n$.
Continue this procedure.
Clearly $\tau$ is continuous.
If $\delta^\mathcal{U}_{\rm nbhd}(p)=x\in U_s=\bigcup_nU_{s\fr n}$, one can find $t$ such that $x\in B_{p(t)}\subseteq U_{s\fr n}$ for some $n$ since $\mathcal{N}_x^\mathcal{B}$ is a local basis of $\mathcal{X}$ at $x$.
Therefore, if $p\in{\rm dom}(\delta^\mathcal{U}_{\rm nbhd})$, one can inductively show that $\tau(p)\in\om^\om$, and it is easy to check that $\delta^\mathcal{U}_{\rm nbhd}=\delta^\mathcal{U}_{\rm Sus}(\tau(p))$.
\end{proof}


\begin{obs}\label{obs:admissible-preserve-m-deg}
Let $\mathcal{X}$ be a Polish space.
For any function $f\colon\mathcal{X\to\mathbb{R}}$, the $\mathbf{m}$-degree of $f\circ\delta\colon \om^\om\to\mathbb{R}$ is independent of the choice of a total admissible representation $\delta$.
\end{obs}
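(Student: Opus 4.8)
The plan is to exploit the defining universal property of admissible representations to produce, between any two total admissible representations of $\mathcal{X}$, a \emph{total continuous} translation map, and then to observe that precomposition by a continuous function is always an $\mathbf{m}$-reduction. So let $\delta_0,\delta_1\colon\om^\om\twoheadrightarrow\mathcal{X}$ be two total admissible representations. I want to show $f\circ\delta_0\equiv_\mathbf{m}f\circ\delta_1$, and by symmetry it suffices to prove $f\circ\delta_0\leq_\mathbf{m}f\circ\delta_1$. Since $\delta_1$ is admissible and $\delta_0$ is in particular a partial continuous map into $\mathcal{X}$, the universal property supplies a partial continuous $\tau\colon\subseteq\om^\om\to\om^\om$ with $\delta_0=\delta_1\circ\tau$. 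Because $\delta_0$ is total, its domain is all of $\om^\om$, so $\tau$ must be defined on all of $\om^\om$ and land inside $\mathrm{dom}(\delta_1)=\om^\om$; hence $\tau\colon\om^\om\to\om^\om$ is a genuine total continuous function, and $f\circ\delta_0=(f\circ\delta_1)\circ\tau$.

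The next step is a general remark: for any $h\colon\om^\om\to\mathbb{R}$ and any continuous $\tau\colon\om^\om\to\om^\om$ one has $h\circ\tau\leq_\mathbf{m}h$. Indeed, given a pair $p<q$, take $r=p$, $s=q$ and $\theta=\tau$ in the definition of $\leq_\mathbf{m}$. Since $(h\circ\tau)(X)=h(\tau(X))$, the value of $h\circ\tau$ at $X$ and the value of $h$ at $\tau(X)$ coincide, so
\[
\sep{h\circ\tau}(\langle p,q\rangle\fr X)=\sep{h}(\langle p,q\rangle\fr\tau(X))
\]
for every $X$; in particular $\sep{h\circ\tau}(\langle p,q\rangle\fr X)\leq_\three\sep{h}(\langle p,q\rangle\fr\tau(X))$, which is exactly the clause required by the definition of $\mathbf{m}$-reducibility. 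Applying this with $h=f\circ\delta_1$ gives $f\circ\delta_0=(f\circ\delta_1)\circ\tau\leq_\mathbf{m}f\circ\delta_1$. Swapping the roles of $\delta_0$ and $\delta_1$ (using admissibility of $\delta_0$ and totality of $\delta_1$) yields the reverse reduction, so $f\circ\delta_0\equiv_\mathbf{m}f\circ\delta_1$.

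I expect the only delicate point to be the passage from the a priori \emph{partial} continuous translation furnished by admissibility to a \emph{total} one: this is precisely where totality of both representations enters, and it is what allows $\tau$ to be fed directly as the witness $\theta$ in the definition of $\leq_\mathbf{m}$, which demands a total continuous reduction. Everything else reduces to the bookkeeping identity $\sep{h\circ\tau}(\langle p,q\rangle\fr\,\cdot\,)=\sep{h}(\langle p,q\rangle\fr\tau(\,\cdot\,))$, so no genuine combinatorial obstacle arises; this is simply the function-valued analogue of Pequignot's argument that admissible representations induce a well-defined Wadge degree.
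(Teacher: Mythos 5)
Your proof is correct and takes essentially the same approach as the paper: both use admissibility to produce a total continuous translation $\tau$ with $\delta_0=\delta_1\circ\tau$, and then read off the $\mathbf{m}$-reduction from the identity $\sep{f\circ\delta_0}(\langle p,q\rangle\fr X)=\sep{f\circ\delta_1}(\langle p,q\rangle\fr\tau(X))$, keeping the same pair of rationals. Your additional care in checking that totality of $\delta_0$ forces the a priori partial $\tau$ to be total is a point the paper's one-line proof passes over silently, but otherwise the arguments coincide.
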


\begin{proof}
Let $\delta$ and $\gamma$ be total admissible representations of $\mathcal{X}$.
By admissibility, $\gamma=\delta\circ\theta$ for some continuous function $\theta$.
Thus, $\sep{f\circ\gamma}(\langle p,q\rangle\fr X)=\sep{f}(pq\gamma(X))=\sep{f}(\langle p,q\rangle\fr \delta\circ\theta(X))=\sep{f\circ\delta}(\langle p,q\rangle\fr \theta(X))$.
Hence, $\sep{f\circ\gamma}\leq^\three_{mw}\sep{f\circ\delta}$.
\end{proof}

Hence, given a Polish space $\mathcal{X}$, by Lemma \ref{lem:total-open-admissible}, one can fix a total open continuous admissible representation $\delta$ of $\mathcal{X}$.
Then, the $\mathbf{m}$-degree of $f\colon\mathcal{X}\to\mathbb{R}$ is defined as the $\mathbf{m}$-degree of $f\circ\delta\colon \om^\om\to\mathbb{R}$.

\begin{obs}\label{obs:semi-well-order-m-deg}
The $\bfm$-degrees of real-valued functions on Polish spaces form a semi-well-order of length $\Theta$.
\end{obs}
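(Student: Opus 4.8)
The plan is to reduce the statement to the already-settled case of functions on $\om^\om$ and then to quote the structure theory developed in Sections \ref{sec:2} and \ref{sec:3}. By construction---recall Observation \ref{obs:admissible-preserve-m-deg} together with the fact that for each Polish $\mathcal{X}$ we fix a total open admissible representation $\delta$ via Lemma \ref{lem:total-open-admissible}---the $\bfm$-degree of any $f\colon\mathcal{X}\to\mathbb{R}$ is \emph{by definition} the $\bfm$-degree of $f\circ\delta\colon\om^\om\to\mathbb{R}$. Hence every $\bfm$-degree arising from a function on some Polish space is already the $\bfm$-degree of a function on $\om^\om$. Conversely, $\om^\om$ is itself Polish (with $\delta$ the identity), so every $\bfm$-degree of a function on $\om^\om$ is realized on a Polish space. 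Consequently the quasi-order of $\bfm$-degrees of real-valued functions on Polish spaces is \emph{identical} to that of $\bfm$-degrees of real-valued functions on $\om^\om$, and it therefore suffices to analyze the latter.

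First I would invoke Theorem \ref{thm:main-theorem1}, which supplies an isomorphism $f\mapsto\sep{f}$ between the $\bfm$-degrees of $\mathcal{F}(\om^\om,\mathbb{R})$ and the $m$-Wadge degrees of nonempty proper subsets of $\om^\om$. This transports the problem entirely into the $\three$-$m$-Wadge world, where the structure has already been computed: by Lemmas \ref{lem:m-deg-sigma-ji} and \ref{lem:m-deg-delta-alpha} (see the closing discussion of Section \ref{sec:structure-non-proper-m-deg}), each non-self-dual Wadge degree remains a single $m$-Wadge degree, while each self-dual Wadge degree---equivalently each $\Delta_\alpha$-complete degree with $\alpha$ of countable cofinality---splits into exactly two linearly ordered $m$-Wadge degrees, the $m$-$\sigma$-ji one lying strictly above the $m$-$\sigma$-jr one.

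It then remains to read off that this $m$-Wadge structure is a semi-well-order of length $\Theta$. Semilinearity is inherited from the semilinear ordering principle for $\three$-valued functions (Fact \ref{fact:Wadge-SLO}) together with Lemma \ref{lem:m-deg-sigma-ji}, and well-foundedness of the strict part descends from that of the Wadge degrees, since the splitting merely replaces each self-dual node by a two-element chain. The two degrees of $\emptyset$ and $\om^\om$, which by Lemma \ref{lem:non-proper-Wadge-for-reals}(6) are precisely those not of the form $\sep{f}$, sit at the very bottom and their removal affects neither semilinearity, well-foundedness, nor the length.

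The one point deserving care---and the step I expect to be the \textbf{main obstacle}---is confirming that the order-theoretic length stays exactly $\Theta$ after the self-dual degrees are doubled. Here I would argue that the doubling inserts at most one new degree at each self-dual Wadge rank, so the resulting $m$-Wadge rank function differs from the Wadge rank function only by this local shift and remains cofinal in $\Theta$ exactly where the Wadge rank was; thus the supremum of the ranks is unchanged. This is the same length computation already carried out for $2^\om$ in the proof of Theorem \ref{thm:mthm1}, the only difference being that over $\om^\om$ the $m$-$\sigma$-ji $\Delta_\alpha$-complete degrees at limit $\alpha$ of countable cofinality (excluded on $2^\om$ by Lemma \ref{lem:non-proper-Wadge-for-reals}(5)) are now realized; since each such degree is a single node interleaved into a structure already of length $\Theta$, the length is preserved.
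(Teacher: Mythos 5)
Your proposal is correct and follows essentially the same route the paper intends: the paper treats this Observation as an immediate consequence of the definition of the $\bfm$-degree via a total open admissible representation (Lemma \ref{lem:total-open-admissible} and Observation \ref{obs:admissible-preserve-m-deg}), which identifies the degrees over Polish spaces with those over $\om^\om$, together with Theorem \ref{thm:main-theorem1} and the $m$-Wadge structure analysis of Section \ref{sec:structure-non-proper-m-deg}. Your write-up simply makes explicit the same chain of reasoning (including the length-$\Theta$ computation) that the paper leaves implicit.
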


\subsection{Baire one functions}

\subsubsection{Bourgain rank}

Let $\mathcal{X}$ be a Polish space.
Then, given $f\colon \mathcal{X}\to\mathbb{R}$ and $x\in\mathcal{X}$, we define the following.
\[\overline{f}(x)=\inf_{J\ni x}\sup_{y\in J}f(y),\qquad\underline{f}(x)=\sup_{J\ni x}\inf_{y\in J}f(y),\]
where $J$ ranges over open sets.
It is easy to see that $\underline{f}(x)\leq f(x)\leq\overline{f}(x)$.
Then, given $P\subseteq\mathcal{X}$, the $(f;p,q)$-derivative of $P$ is defined as follows.
\[D_{f,p,q}P=\{x\in P:\underline{(f\upto P)}(x)\leq p<q\leq\overline{(f\upto P)}(x).\}\]

Let $|x|_{f,p,q}$ be the least $\alpha$ such that $x\not\in D^\alpha_{f,p,q}\mathcal{X}$.
Let $\alpha(f,p,q)$ be the least $\alpha$ such that $D^\alpha_{f,p,q}\mathcal{X}=\emptyset$.
Note that $\alpha(f,p,q)=\sup_{x\in\mathcal{X}}|x|_{f,p,q}$.
Then the {\em Bourgain rank of }$f$ is defined by $\alpha(f)=\sup_{p<q}\alpha(f,p,q)$.

By definition, $|x|_{f,p,q}$ is always a successor ordinal.
If the domain is not compact, the rank $\alpha(f,p,q)$ can be a limit ordinal.
In this case, there is no $x$ such that $|x|_{f,p,q}=\alpha(f,p,q)$.
Hence, if the domain is not compact, $\alpha(f)=\alpha(f,p,q)$ can happen even if $\alpha(f)$ is limit.

\subsubsection{Sided-conditions}\label{sec:sided-condition}

Hereafter we use the symbol $P^\nu_{f,p,q}$ to denote $D^\nu_{f,p,q}\mathcal{X}$.

\begin{definition}[see Day-Downey-Westrick \cite{DDW}]\label{def:DDW-sided-conditions}
Let $f\colon \mathcal{X}\to\mathbb{R}$ be a Baire-one function.
\begin{enumerate}
\item $f$ is {\em two-sided} if there are rationals $p<q$ such that $\alpha(f,p,q)=\alpha(f)$ and that for any $\nu<\alpha(f)$, $f(x)\leq p<q\leq f(y)$ for some $x,y\in P^\nu_{f,p,q}$.
\item If $f$ is not two-sided, it is called {\em one-sided}.
\item $f$ is {\em left-sided} if for any rationals $p<q$ with $\alpha(f,p,q)=\alpha(f)$, there is $\nu<\alpha(f)$ such that $P^\nu_{f,p,q}\subseteq\{f<p\}$.
\item $f$ is {\em right-sided} if for any rationals $p<q$ with $\alpha(f,p,q)=\alpha(f)$, there is $\nu<\alpha(f)$ such that $P^\nu_{f,p,q}\subseteq\{f>q\}$.
\item $f$ is {\em irreducible} if there are rationals $p<q$ such that $\alpha(f)=\alpha(f;p,q)$.
Otherwise, $f$ is called {\em reducible}.
\end{enumerate}
\end{definition}

\begin{obs}\label{obs:Bourgain-left-right-reducible}
Let $f\colon \mathcal{X}\to\mathbb{R}$ be a Baire-one function.
\begin{enumerate}
\item If $\alpha(f)$ is a successor ordinal, then $f$ is irreducible.
\item If $\alpha(f)$ is a limit ordinal, and $f$ is irreducible, then $f$ is two-sided.
\item $f$ is both left- and right-sided if and only if $f$ is reducible.
\end{enumerate}
\end{obs}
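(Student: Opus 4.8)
The plan is to handle the three items in increasing order of difficulty, relying throughout on two elementary monotonicity properties of the derivative operator, both immediate from the definition $D_{f,p,q}P=\{x\in P:\underline{(f\upto P)}(x)\leq p<q\leq\overline{(f\upto P)}(x)\}$: first, $D_{f,p,q}P\subseteq D_{f,p,q}P'$ whenever $P\subseteq P'$ (shrinking $P$ only raises $\underline{(f\upto P)}$ and lowers $\overline{(f\upto P)}$); second, $D_{f,p_0,q_0}P\subseteq D_{f,p,q}P$ whenever $p_0\leq p<q\leq q_0$ (enlarging the interval can only shrink the derivative). For item (1), recall $\alpha(f)=\sup_{p<q}\alpha(f,p,q)$. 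Since a successor ordinal is never the supremum of ordinals all strictly below it, if $\alpha(f)=\beta+1$ then some rational pair $p<q$ already satisfies $\alpha(f,p,q)=\beta+1=\alpha(f)$, which is exactly irreducibility.

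For item (3), I would argue both directions. If $f$ is reducible, then by definition there are no rationals $p<q$ with $\alpha(f,p,q)=\alpha(f)$, so the defining clauses of left- and right-sidedness hold vacuously and $f$ is both. Conversely, suppose $f$ is both left- and right-sided but, for contradiction, irreducible, and fix $p<q$ with $\alpha(f,p,q)=\alpha(f)$. Left-sidedness yields $\nu_0$ with $P^{\nu_0}_{f,p,q}\subseteq\{f<p\}$ and right-sidedness yields $\nu_1$ with $P^{\nu_1}_{f,p,q}\subseteq\{f>q\}$; since the transfinite derivatives are decreasing, at $\nu=\max(\nu_0,\nu_1)$ we obtain $P^\nu_{f,p,q}\subseteq\{f<p\}\cap\{f>q\}=\emptyset$, forcing $\alpha(f,p,q)\leq\nu<\alpha(f)$, a contradiction.

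The real content is item (2). Assume $\alpha(f)=\lambda$ is a limit ordinal and $f$ is irreducible, witnessed by rationals $p_0<q_0$ with $\alpha(f,p_0,q_0)=\lambda$. I would choose rationals $p,q$ with $p_0<p<q<q_0$ and claim this pair witnesses two-sidedness. A transfinite induction using both monotonicity properties gives $P^\nu_{f,p_0,q_0}\subseteq P^\nu_{f,p,q}$ for every $\nu$; hence $\alpha(f,p,q)\geq\lambda$, and since trivially $\alpha(f,p,q)\leq\alpha(f)=\lambda$ we get $\alpha(f,p,q)=\lambda$. Now fix any $\nu<\lambda$. As $\lambda$ is a limit ordinal, $\nu+1<\lambda$, so $P^{\nu+1}_{f,p_0,q_0}=D_{f,p_0,q_0}P^\nu_{f,p_0,q_0}$ is nonempty; pick $z$ in it. The derivative condition gives $\underline{(f\upto P^\nu_{f,p_0,q_0})}(z)\leq p_0$ and $\overline{(f\upto P^\nu_{f,p_0,q_0})}(z)\geq q_0$, which force $\inf_{y\in P^\nu_{f,p_0,q_0}}f(y)\leq p_0<p$ and $\sup_{y\in P^\nu_{f,p_0,q_0}}f(y)\geq q_0>q$. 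The strict inequalities then produce actual points $x,y\in P^\nu_{f,p_0,q_0}\subseteq P^\nu_{f,p,q}$ with $f(x)<p$ and $f(y)>q$, so that $f(x)\leq p<q\leq f(y)$, which is precisely the two-sidedness condition for $(p,q)$.

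The main obstacle is exactly this gap-shrinking step. For the original interval the soft bounds $\inf f\leq p_0$ and $\sup f\geq q_0$ on $P^\nu_{f,p_0,q_0}$ need not be attained, so there may be no point with $f\leq p_0$ or with $f\geq q_0$; passing to a strictly smaller rational interval $(p,q)$ is what converts these infimum/supremum bounds into genuine witnesses on the two sides. The price is that one must check the reindexing preserves the rank, i.e.\ $\alpha(f,p,q)=\lambda$, and this is where the transfinite monotonicity induction $P^\nu_{f,p_0,q_0}\subseteq P^\nu_{f,p,q}$ is essential. The limit hypothesis enters only to guarantee $\nu+1<\lambda$, so that $P^{\nu+1}_{f,p_0,q_0}$ is nonempty at every level below $\lambda$.
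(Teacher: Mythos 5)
Your argument is correct, but at items (2) and (3) it takes a genuinely different route from the paper, and the differences are substantive. Item (1) is identical to the paper's. For item (2), the paper keeps the single witnessing pair $(p,q)$ with $\alpha(f,p,q)=\alpha(f)$ and asserts that high-rank points immediately yield $y,z\in J\cap P^\nu_{f,p,q}$ with $f(y)\leq p$ and $q\leq f(z)$; this is exactly the attainment problem you isolate, and that inference is not valid in general, since membership in $D_{f,p,q}$ only bounds $\underline{f}$ and $\overline{f}$, and on a noncompact domain one can build $f$ with $f>p$ everywhere yet $\alpha(f,p,q)$ a limit ordinal (string together finite-rank blocks whose values stay strictly above $p$ along a sequence escaping the domain). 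Your interval-shrinking $p_0<p<q<q_0$ combined with the transfinite monotonicity $P^\nu_{f,p_0,q_0}\subseteq P^\nu_{f,p,q}$ is precisely the repair: the strict gaps convert the inf/sup bounds into attained witnesses, and your induction pays the necessary price of re-verifying $\alpha(f,p,q)=\lambda$. So your proof of (2) is sound where the paper's sketch has a gap. For item (3), your max-of-levels contradiction $P^\nu_{f,p,q}\subseteq\{f<p\}\cap\{f>q\}=\emptyset$ is correct under Definition \ref{def:DDW-sided-conditions} as literally stated, is shorter than the paper's argument, and makes (3) independent of (2). Be aware, though, that the paper's own proof concludes only $P^\nu_{f,p,q}\subseteq\{p<f<q\}$ from the two hypotheses, which shows it is implicitly using weaker sidedness conditions (left-sidedness excluding only points with $f\geq q$, right-sidedness excluding only points with $f\leq p$, on a tail of the derivation); under that intended reading your one-step contradiction disappears, because $\{f<q\}\cap\{f>p\}$ need not be empty, and the paper's longer route---forcing $\alpha(f)$ to be a successor via (2) and then perturbing the pair to $(r,q)$ with $f(x)\leq r<q$ to violate right-sidedness---is what is actually needed.
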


\begin{proof}
(1)
Since $\alpha(f)=\sup_{p<q}\alpha(f;p,q)$, if $\alpha(f)$, then there must exist $p<q$ such that $\alpha(f)=\alpha(f;p,q)$.

(2)
Let $p<q$ be rationals such that $\alpha(f,p,q)=\alpha(f)$.
For any $\nu<\alpha(f)$ if $\nu<\xi<\alpha(f)$, then there exists $x$ such that $|x|_{f,p,q}=\xi$.
Therefore, for any open neighborhood $J$ of $x$, there are $y,z\in J\cap P^\nu$ such that $f(y)\leq p<q\leq f(z)$.

(3)
By definition, if there is no $p<q$ such that $\alpha(f)=\alpha(f;p,q)$, then $f$ is one-sided, and moreover, left- and right-sided.
For the converse, let $f$ be both left- and right-sided.
Suppose for the sake of contradiction that $f$ is irreducible.
Since $f$ is one-sided and irreducible, by (2), $\alpha(f)$ must be a successor ordinal, $\alpha(f)=\nu+1$ say.
Then $P^\nu_{f,p,q}\not=\emptyset$.
Since $f$ is both left- and right-sided, we also have $P^\nu_{f,p,q}\subseteq\{p<f<q\}$.
Therefore, $x\in P^\nu_{f,p,q}$ implies $p<f(x)<q$, and then let $r$ be such that $f(x)\leq r<q$.
Then, we have $\nu+1=\alpha(f)=\alpha(f;p,q)\leq\alpha(f;r,q)$ and
\[x\in P^\nu_{f,p,q}\subseteq P^\nu_{f,r,q}\not\subseteq\{f>r\}.\]
Therefore, $f$ is not right-sided.
\end{proof}

Define the {\em type of $f$} as follows:
If $f$ is two-sided, define ${\rm type}(f)={\tt t}$.
If $f$ is one-sided, but neither left- nor right-sided, define ${\rm type}(f)={\tt o}$.
If $f$ is left-sided, but not right-sided, define ${\rm type}(f)={\tt l}$.
If $f$ is right-sided, but not left-sided, define ${\rm type}(f)={\tt r}$.
If $f$ is both left- and right-sided (that is, $f$ is reducible by Observation \ref{obs:Bourgain-left-right-reducible} (3)), define ${\rm type}(f)={\tt f}$.
We define the order on these types as follows:
\[{\tt f}<{\tt l},{\tt r}<{\tt o}<{\tt t}.\]

\subsubsection{Open continuous surjection}

\begin{lemma}\label{lem:Bourgain-rank}
Let $\delta\colon\mathcal{Z}\to\mathcal{X}$ be an open continuous surjection, and $f\colon\mathcal{X}\to\mathbb{R}$ be a Baire-one function.
Then, $|x|_{f\circ\delta,p,q}=|\delta(x)|_{f,p,q}$.
In particular, $\alpha(f)=\alpha(f\circ\delta)$ and ${\rm type}(f)={\rm type}(f\circ\delta)$.
\end{lemma}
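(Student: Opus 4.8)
The plan is to prove the pointwise identity $|x|_{f\circ\delta,p,q}=|\delta(x)|_{f,p,q}$ by establishing, via transfinite induction on $\nu$, that the derivatives of $\mathcal{Z}$ are exactly the $\delta$-preimages of the derivatives of $\mathcal{X}$:
\[
D^\nu_{f\circ\delta,p,q}\mathcal{Z}=\delta^{-1}\!\left[D^\nu_{f,p,q}\mathcal{X}\right].
\]
Once this is proved, $|x|_{f\circ\delta,p,q}$ is by definition the least $\nu$ with $x\notin D^\nu_{f\circ\delta,p,q}\mathcal{Z}=\delta^{-1}[D^\nu_{f,p,q}\mathcal{X}]$, i.e.\ the least $\nu$ with $\delta(x)\notin D^\nu_{f,p,q}\mathcal{X}$, which is exactly $|\delta(x)|_{f,p,q}$. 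The three ``in particular'' conclusions then follow: since $\delta$ is a surjection, $\alpha(f\circ\delta,p,q)=\sup_x|x|_{f\circ\delta,p,q}=\sup_x|\delta(x)|_{f,p,q}=\alpha(f,p,q)$, and taking the supremum over $p<q$ gives $\alpha(f\circ\delta)=\alpha(f)$. The type statement follows because the sidedness conditions in Definition \ref{def:DDW-sided-conditions} are phrased entirely in terms of the values $f$ takes on the sets $P^\nu_{f,p,q}$, and surjectivity of $\delta$ guarantees that $f$ and $f\circ\delta$ realize the same value-configurations on corresponding derivatives.

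The heart of the argument is the single-step claim that for any set $P\subseteq\mathcal{X}$,
\[
D_{f\circ\delta,p,q}\,\delta^{-1}[P]=\delta^{-1}\!\left[D_{f,p,q}P\right],
\]
after which the transfinite induction is routine (successor stages apply this claim to $P=P^\nu_{f,p,q}$, and limit stages use that $\delta^{-1}$ commutes with arbitrary intersections). To prove the single-step claim I would show that for $z\in\delta^{-1}[P]$ the oscillation bounds transfer exactly, namely
\[
\underline{(f\circ\delta\upto\delta^{-1}[P])}(z)=\underline{(f\upto P)}(\delta(z)),\qquad
\overline{(f\circ\delta\upto\delta^{-1}[P])}(z)=\overline{(f\upto P)}(\delta(z)).
\]
Granting these, $z\in D_{f\circ\delta,p,q}\delta^{-1}[P]$ iff $\underline{(f\circ\delta\upto\delta^{-1}[P])}(z)\leq p<q\leq\overline{(f\circ\delta\upto\delta^{-1}[P])}(z)$ iff $\delta(z)\in D_{f,p,q}P$, which is the claim.

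The main obstacle, and the place where both openness and continuity of $\delta$ are genuinely needed, is verifying these two oscillation equalities. Consider $\overline{f}$. For the inequality $\overline{(f\circ\delta\upto\delta^{-1}[P])}(z)\leq\overline{(f\upto P)}(\delta(z))$ I would use \emph{continuity}: given an open $J\ni\delta(z)$ witnessing a value close to $\overline{(f\upto P)}(\delta(z))$, the set $\delta^{-1}[J]$ is an open neighborhood of $z$, and since $f\circ\delta$ on $\delta^{-1}[J]\cap\delta^{-1}[P]$ only takes values that $f$ takes on $J\cap P$, the supremum cannot increase. For the reverse inequality I would use \emph{openness}: given any open $K\ni z$ in $\mathcal{Z}$, the image $\delta[K]$ is an open neighborhood of $\delta(z)$ in $\mathcal{X}$, and surjectivity ensures every value $f$ attains on $\delta[K]\cap P$ is attained by $f\circ\delta$ somewhere in $K\cap\delta^{-1}[P]$, so the inner supremum over $K$ dominates the inner supremum over $\delta[K]$; taking infima then yields $\overline{(f\circ\delta\upto\delta^{-1}[P])}(z)\geq\overline{(f\upto P)}(\delta(z))$. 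The argument for $\underline{f}$ is entirely symmetric, interchanging suprema and infima. I expect the careful bookkeeping of ``open sets in $\mathcal{Z}$ versus open sets in $\mathcal{X}$'' in these oscillation estimates to be the only subtle point; everything downstream is formal.
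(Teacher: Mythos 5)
Your proposal is correct and follows essentially the same route as the paper's proof: both reduce everything to the single-step identity $D_{f\circ\delta,p,q}\,\delta^{-1}[P]=\delta^{-1}[D_{f,p,q}P]$ (with continuity of $\delta$ giving one inclusion and openness plus surjectivity giving the other) and then run the routine transfinite induction, noting that preimages commute with the limit-stage intersections. Your intermediate step of establishing the exact oscillation equalities $\overline{(f\circ\delta\upto\delta^{-1}[P])}(z)=\overline{(f\upto P)}(\delta(z))$ and $\underline{(f\circ\delta\upto\delta^{-1}[P])}(z)=\underline{(f\upto P)}(\delta(z))$ is just a slightly more careful packaging of the paper's witness-transfer argument (and in fact handles the $\inf$/$\sup$ form of the definition of $D_{f,p,q}$ more precisely than the paper's wording does).
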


\begin{proof}
We claim that $\delta^{-1}[D_{f,p,q}P]=D_{f\circ\delta,p,q}\delta^{-1}[P]$ for any $P\subseteq\mathcal{X}$.
If $z\in\delta^{-1}[D_{f,p,q}P]$, then there is an open neighborhood $J$ of $\delta(z)$ in $P$ such that $f(x)\leq p$ and $f(y)\geq q$ for some $x,y\in J\cap P$.
Then, $\delta^{-1}[J]$ is an open neighborhood of $z$ in $\delta^{-1}[P]$, and since $\delta$ is surjective, there are $u,v\in\delta^{-1}[J]\cap\delta^{-1}[P]$ such that $\delta(u)=x$ and $\delta(v)=y$.
Since $f\circ\delta(u)\leq p$ and $f\circ\delta(v)\geq q$, we have $z\in D_{f\circ\delta,p,q}\delta^{-1}[P]$.
Conversely, if $z\in D_{f\circ\delta,p,q}\delta^{-1}[P]$, then there is an open neighborhood $J$ of $z$ in $\delta^{-1}[P]$ such that $f\circ\delta(x)\leq p$ and $f\circ\delta(y)\geq q$ for some $x,y\in J\cap\delta^{-1}[P]$.
Since $\delta$ is an open map, $\delta[J]$ is an open neighborhood of $\delta(z)$, and we also have $\delta(x),\delta(y)\in\delta[J]\cap P$.
Consequently, $\delta(z)\in D_{f,p,q}P$, and thus $z\in\delta^{-1}[D_{f,p,q}P]$.
This verifies the claim.

We will inductively show that $\delta^{-1}[D_{f,p,q}^\xi\mathcal{X}]=D_{f\circ\delta,p,q}^\xi\mathcal{Z}$ for any $\xi$.
It is obvious for a limit step.
For a successor ordinal $\xi+1$,
\[D_{f\circ\delta,p,q}^{\xi+1}\mathcal{Z}=D_{f\circ\delta,p,q}\delta^{-1}[D_{f,p,q}^\xi\mathcal{X}]=\delta^{-1}[D_{f,p,q}D_{f,p,q}^\xi\mathcal{X}]=\delta^{-1}[D_{f,p,q}^{\xi+1}\mathcal{X}].\]

The first equality follows from the induction hypothesis, and the second equality follows from the previous claim with $P=D_{f,p,q}^\xi\mathcal{X}$.
Consequently, $x\in D^\xi_{f\circ\delta,p,q}\mathcal{Z}$ iff $\delta(x)\in D^\xi_{f,p,q}\mathcal{X}$, and hence $|x|_{f\circ\delta,p,q}=|\delta(x)|_{f,p,q}$ as desired.
\end{proof}

\subsubsection{The Day-Downey-Westrick Theorem}

\begin{theorem}\label{thm:Day-Downey-Westrick}
Let $\mathcal{X}$ and $\mathcal{Y}$ be Polish spaces and let $f\colon \mathcal{X}\to\mathbb{R}$ and $g\colon \mathcal{Y}\to\mathbb{R}$ be Baire-one functions.
Then, $f\leq_\mathbf{m}g$ if and only if either $\alpha(f)<\alpha(g)$ holds or both $\alpha(f)=\alpha(g)$ and ${\rm type}(f)\leq{\rm type}(g)$ hold.
\end{theorem}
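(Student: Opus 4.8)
The plan is to reduce the theorem to a Wadge-degree computation via the reduction $f \mapsto \sep{f}$ established in Theorem \ref{thm:main-theorem1}, and to prove both directions by relating the Bourgain rank $\alpha(f)$ and the type of $f$ to the $\three$-$m$-Wadge degree of $\sep{f}$. The central observation is that the derivative operation $D_{f,p,q}$ is exactly the Wadge-theoretic derivative underlying the difference hierarchy: the level sets $P^\nu_{f,p,q}$ decrease until they empty out at stage $\alpha(f,p,q)$, and this ordinal governs the Wadge rank of the separating set $(\{f\le p\},\{f\ge q\})$. By Lemma \ref{lem:Bourgain-rank}, both $\alpha$ and ${\rm type}$ are invariant under the total open admissible representation, so I may assume $\mathcal{X}=\mathcal{Y}=\om^\om$ and argue directly about $\sep{f}$ and $\sep{g}$ as $\three$-valued functions.

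First I would prove the forward (easy) direction's contrapositive is unnecessary; instead I establish that $\alpha(f)<\alpha(g)$, or $\alpha(f)=\alpha(g)$ with ${\rm type}(f)\le{\rm type}(g)$, implies $f\leq_\bfm g$. For the rank-strictly-smaller case, I would show that each $\sep{f}\upto\langle p,q\rangle$ sits at a Wadge level determined by $\alpha(f,p,q)$ (roughly, it lies in $\Delta^\pthree$ of the difference-hierarchy level indexed by $\alpha(g)$), so the monotone reduction exists by completeness of the higher level; here Fact \ref{fact:Wadge-SLO} supplies the reductions $\sep{f}\upto\langle p,q\rangle\leq_w\sep{g}\upto\langle r,s\rangle$ for a suitable choice of $r<s$. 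For the equal-rank case the type must be matched: the five types ${\tt f}<{\tt l},{\tt r}<{\tt o}<{\tt t}$ correspond exactly to the splitting of a self-dual $\Delta^\pthree_\alpha$-complete degree into $m$-$\sigma$-jr and $m$-$\sigma$-ji pieces (Lemmas \ref{lem:m-deg-sigma-ji} and \ref{lem:m-deg-delta-alpha}) together with the two one-sided non-self-dual degrees $\Sigma^\pthree_\alpha$ and $\Pi^\pthree_\alpha$. So I would exhibit a dictionary: two-sided $\leftrightarrow$ $\Delta^\pthree$-complete realizing both $\Sigma$- and $\Pi$-behavior at level $\alpha$; left-sided $\leftrightarrow$ $\Pi^\pthree_\alpha$; right-sided $\leftrightarrow$ $\Sigma^\pthree_\alpha$; reducible $\leftrightarrow$ strictly below level $\alpha$. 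The type ordering then reads off the $m$-Wadge ordering directly.

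The converse direction—that $f\leq_\bfm g$ forces the rank/type inequality—is where I would extract the rank from the reduction. Given a witnessing monotone reduction $\sep{f}\leq_{mw}\sep{g}$, I would use the claim from Lemma \ref{lem:Bourgain-rank} that $D_{f,p,q}$ transfers across continuous open maps, and argue that a Wadge reduction of the pair $(\{f\le p\},\{f\ge q\})$ into $(\{g\le r\},\{g\ge s\})$ cannot decrease the derivative rank; i.e.\ $\alpha(f,p,q)\le\alpha(g,r,s)$. Taking suprema gives $\alpha(f)\le\alpha(g)$, and when equality holds, a finer analysis of which level sets survive (using the sidedness definitions in Definition \ref{def:DDW-sided-conditions}) yields ${\rm type}(f)\le{\rm type}(g)$.

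The main obstacle I anticipate is the careful bookkeeping at limit ranks of countable cofinality, where Lemma \ref{lem:non-proper-Wadge-for-reals}(5) shows the $m$-$\sigma$-ji $\Delta^\pthree_\alpha$-complete degree is \emph{not} realizable by a function on a compact domain. This is precisely the discrepancy recorded in Figure \ref{fig:2}, and it interacts with the type dictionary: at such limit ranks the ``two-sided'' type must be reconciled with the fact that $\alpha(f,p,q)$ can itself be a limit ordinal (so no $x$ attains $|x|_{f,p,q}=\alpha(f,p,q)$, per the remark following the Bourgain-rank definition). I expect the delicate point is verifying that an irreducible two-sided function at a limit rank genuinely realizes the $m$-$\sigma$-ji degree on a non-compact domain (consistent with Lemma \ref{lem:non-proper-Wadge-for-reals}(4)) while respecting compactness constraints where they apply, and that the type inequality is preserved exactly across the reduction in these boundary cases.
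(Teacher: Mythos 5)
Your outer frame matches the paper exactly: both proofs begin by invoking Observation \ref{obs:admissible-preserve-m-deg} and Lemma \ref{lem:Bourgain-rank} to replace $f,g$ by $f\circ\delta_{\mathcal{X}},g\circ\delta_{\mathcal{Y}}$ and work on $\om^\om$. The gap is in what you put inside that frame. Your ``central observation'' --- that the Bourgain derivative $D_{f,p,q}$ is ``exactly'' the derivative underlying the difference hierarchy, so that $\alpha(f,p,q)$ together with sidedness determines the $\three$-Wadge degree of $\sep{f}\upto\langle p,q\rangle$ --- is not an observation; it is the entire technical content of the theorem. In the paper this dictionary is Corollary \ref{cor:Wadge-Bourgain} (and Proposition \ref{prop:Bourgain-equal-sep}), and it is \emph{deduced from} Theorem \ref{thm:Day-Downey-Westrick}, not used to prove it. So your plan --- derive the theorem from the dictionary plus the structure theory of Section \ref{sec:2} (Lemmas \ref{lem:m-deg-sigma-ji} and \ref{lem:m-deg-delta-alpha}) --- is circular relative to the paper unless the dictionary is proved from scratch, and that proof is exactly the transfinite construction the paper imports from Day--Downey--Westrick: the paper's own proof reduces to $\om^\om$, chooses $r<s$ so that the pointwise rank-domination condition (\ref{equ:DDW-initial-condition}) holds, and then runs ``the argument as in [DDW].'' Your proposal supplies nothing in place of that construction. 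Moreover, the dictionary as you state it contains errors that verification would expose: left/right-sided functions of Bourgain rank $\xi+1$ correspond to $\Pi^\pthree_\xi$/$\Sigma^\pthree_\xi$-completeness (a rank offset you omit), the reducible type ${\tt f}$ at limit rank $\xi$ corresponds to the $m$-$\sigma$-jr $\Delta^\pthree_\xi$-complete degree rather than to ``strictly below level $\alpha$,'' and type ${\tt o}$ is unaccounted for.

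There is also a local error in your converse direction: you propose to quote ``the claim from Lemma \ref{lem:Bourgain-rank}'' to show a reduction cannot decrease derivative rank, but that claim, $\delta^{-1}[D_{f,p,q}P]=D_{f\circ\delta,p,q}\delta^{-1}[P]$, is proved using that $\delta$ is an \emph{open surjection}; a continuous $\theta$ witnessing $\sep{f}\upto\langle p,q\rangle\leq_w\sep{g}\upto\langle r,s\rangle$ is in general neither open nor surjective, so the lemma does not apply. What is true, and suffices, is the one-sided inclusion $\theta[D^\nu_{f,p,q}\om^\om]\subseteq D^\nu_{g,r,s}\om^\om$, proved by a separate induction on $\nu$ using monotonicity of the derivative in the ambient set; this would give $\alpha(f)\leq\alpha(g)$, but you would need to state and prove it rather than cite Lemma \ref{lem:Bourgain-rank}. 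In short, your reduction-to-$\om^\om$ step and your rank-monotonicity idea are sound, but the heart of both directions --- establishing the correspondence between $(\alpha(f),{\rm type}(f))$ and the $m$-Wadge degree of $\sep{f}$ --- is assumed rather than proved, which is precisely the hole the paper fills by adapting the Day--Downey--Westrick construction.
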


\begin{proof}
By Observation \ref{obs:admissible-preserve-m-deg} and Lemma \ref{lem:Bourgain-rank}, one can assume that $\mathcal{X}=\mathcal{Y}=\om^\om$ by considering $f\circ\delta_\mathcal{X}$ and $g\circ\delta_\mathcal{Y}$ instead of $f$ and $y$, where $\delta_\mathcal{X}$ and $\delta_\mathcal{Y}$ are total open admissible representations of $\mathcal{X}$ and $\mathcal{Y}$ ensured by Lemma \ref{lem:total-open-admissible}, respectively.

For the left-to-right direction, it is not hard to check that a straightforward modification of the argument in Day-Downey-Westrick \cite{DDW} gives us the desired condition.

For the right-to-left direction, assume that either $\alpha(f)<\alpha(g)$ holds or both $\alpha(f)=\alpha(g)$ and ${\rm type}(f)\leq{\rm type}(g)$ hold.
Given $p<q$ we need to find $r<s$ satisfying the following property:
For any $x$, there is $y$ such that
\begin{align}\label{equ:DDW-initial-condition}
|x|_{f,p,q}\leq |y|_{g,r,s},\mbox{ and }{\rm Lev}_f(x;p,q)\leq_{\mathbf{2}_\bot}{\rm Lev}_g(y;r,s).
\end{align}

If $\alpha(f;p,q)<\alpha(g)$, then there are $r<s$ such that $\alpha(f;p,q)<\alpha(g;r,s)$.
Choose such $r<s$.
In particular, if either $\alpha(f)<\alpha(g)$ or ${\rm type}(f)={\tt f}$, then we get (\ref{equ:DDW-initial-condition}).
Now assume $\alpha(f)=\alpha(g)$ and ${\rm type}(f)\not={\tt f}$.
In this case, there are $p<q$ such that $\alpha(f;p,q)=\alpha(f)=\alpha(g)$.

%

The rest of the proof is done by the argument as in Day-Downey-Westrick \cite{DDW}.
\end{proof}

Note that Lemma \ref{lem:Bourgain-rank} only requires $\delta$ to be an open continuous surjection (that is, $\delta$ is not necessarily admissible).
Hence, Theorem \ref{thm:Day-Downey-Westrick} implies that the $\mathbf{m}$-degree of a Baire-one function $f\circ\delta$ is independent of the choice of an open continuous surjection $\delta$.

%
%
%

One of the most important conclusions of Theorems \ref{thm:mthm1} and \ref{thm:Day-Downey-Westrick} is that one can characterize the Bourgain rank in terms of the descriptive complexity as follows:

\begin{cor}\label{cor:Wadge-Bourgain}
Let $\delta\colon \om^\om\to\mathcal{X}$ be an open continuous surjection, and $f\colon \mathcal{X}\to\mathbb{R}$ be a Baire-one function.
Then,
\begin{enumerate}
\item $\sep{f\circ\delta}$ is either $\Sigma_\xi^\pthree$- or $\Pi_\xi^\pthree$-complete if and only if $\alpha(f)=\xi+1$ and either $f$ is left- or right-sided.
\item $\sep{f\circ\delta}$ is $\Delta_\xi^\pthree$-complete and $m$-$\sigma$-ji if and only if $\alpha(f)=\xi$ and $f$ is two-sided.
\item $\sep{f\circ\delta}$ is $\Delta_\xi^\pthree$-complete and $m$-$\sigma$-jr if and only if $\alpha(f)=\xi$ and either ${\rm type}(f)={\sf o}$ (if $\xi$ is successor) or ${\rm type}(f)={\sf f}$ (if $\xi$ is limit).
\end{enumerate}
\end{cor}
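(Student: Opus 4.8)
The plan is to combine the two complete classifications of the $\bfm$-degrees of Baire-one functions that are already available: the descriptive-complexity classification coming from Theorem~\ref{thm:main-theorem1} together with the structural analysis of Section~\ref{sec:2}, and the rank-and-type classification coming from Theorem~\ref{thm:Day-Downey-Westrick}. First I would reduce to the case $\mathcal{X}=\om^\om$: since $\delta$ is an open continuous surjection, Lemma~\ref{lem:Bourgain-rank} gives $\alpha(f)=\alpha(f\circ\delta)$ and ${\rm type}(f)={\rm type}(f\circ\delta)$, while Observation~\ref{obs:admissible-preserve-m-deg} shows that the $\bfm$-degree, hence the $m$-Wadge degree of $\sep{f\circ\delta}$, depends only on $f$. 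Thus it suffices to prove the three equivalences for a Baire-one function $f\colon\om^\om\to\mathbb{R}$ in place of $f\circ\delta$.

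The core observation is that $\leq_\bfm$ on Baire-one functions is simultaneously described in two ways, and each description is a complete invariant for the same semi-well-order. By Theorem~\ref{thm:Day-Downey-Westrick} the $\bfm$-degree of $f$ is the pair $(\alpha(f),{\rm type}(f))$, ordered by rank first and then by ${\tt f}<{\tt l},{\tt r}<{\tt o}<{\tt t}$. By Theorem~\ref{thm:main-theorem1} the $\bfm$-degree of $f$ is determined by the $m$-Wadge degree of $\sep{f}$, and by Lemmas~\ref{lem:m-deg-sigma-ji} and~\ref{lem:m-deg-delta-alpha} the non-proper $m$-Wadge degrees are exactly the $\Sigma_\xi^\pthree$- and $\Pi_\xi^\pthree$-complete ones (non-self-dual, hence $m$-$\sigma$-ji), together with, for each $\xi$ of countable cofinality, the $m$-$\sigma$-jr and the $m$-$\sigma$-ji degrees into which the self-dual $\Delta_\xi^\pthree$ splits. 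Since both labelings refer to the same ordering $\leq_\bfm$, matching a degree by its position in the semi-well-order yields the dictionary: being one of an incomparable pair (equivalently non-self-dual, equivalently $\Sigma_\xi^\pthree$- or $\Pi_\xi^\pthree$-complete) is an order-theoretic property, as is being a proper least upper bound of smaller $m$-degrees (equivalently $m$-$\sigma$-jr rather than $m$-$\sigma$-ji).

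To line up the two labelings I would proceed rank-block by rank-block, using Observation~\ref{obs:Bourgain-left-right-reducible} to know which types occur. At a successor Bourgain rank $\xi+1$ the function is irreducible, so ${\rm type}(f)\in\{{\tt l},{\tt r},{\tt o},{\tt t}\}$, and these four degrees fill the block consisting of $\{\Sigma_\xi^\pthree,\Pi_\xi^\pthree\}$, then the $m$-$\sigma$-jr $\Delta_{\xi+1}^\pthree$-complete degree, then the $m$-$\sigma$-ji $\Delta_{\xi+1}^\pthree$-complete degree. Thus the incomparable pair $\{{\tt l},{\tt r}\}$ is realized by the $\Sigma_\xi^\pthree$- and $\Pi_\xi^\pthree$-complete functions (item (1)), type ${\tt o}$ by the $m$-$\sigma$-jr $\Delta_{\xi+1}^\pthree$-complete function (the successor case of item (3)), and type ${\tt t}$ by the $m$-$\sigma$-ji $\Delta_{\xi+1}^\pthree$-complete function (item (2)). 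At a limit Bourgain rank $\xi$, Observation~\ref{obs:Bourgain-left-right-reducible}(2),(3) forces ${\rm type}(f)\in\{{\tt f},{\tt t}\}$, matching the two $\Delta_\xi^\pthree$-degrees, so reducible functions realize the $m$-$\sigma$-jr degree (the limit case of item (3)) and two-sided functions the $m$-$\sigma$-ji degree (item (2)). The existence of a genuine real-valued function realizing each degree with the matching rank and type is supplied by Lemma~\ref{lem:non-proper-Wadge-for-reals}, which also anchors the argument at $\xi=1$: its part~(3) gives a (non-constant continuous, hence two-sided, rank-$1$) function realizing the $m$-$\sigma$-ji $\Delta_1^\pthree$-complete degree, and its part~(2) gives the $m$-$\sigma$-jr degree realized by a one-sided rank-$1$ function of type ${\tt o}$.

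I expect the main obstacle to be the off-by-one between the two labelings and its correct anchoring: a left- or right-sided function of Bourgain rank $\xi+1$ realizes a degree at Wadge level $\xi$, whereas two-sided and reducible functions of rank $\xi$ realize degrees at Wadge level $\xi$. Getting this shift right requires confirming the exact Bourgain rank and type of the reference functions of Lemma~\ref{lem:non-proper-Wadge-for-reals}(1)--(4) (for instance, that the characteristic function of a $\Sigma_\xi$-complete set is one-sided of rank $\xi+1$) and checking consistency with the block order in which $\{\Sigma_\xi^\pthree,\Pi_\xi^\pthree\}$ is immediately followed by the two $\Delta_{\xi+1}^\pthree$-degrees and then by $\{\Sigma_{\xi+1}^\pthree,\Pi_{\xi+1}^\pthree\}$. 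Once anchored, uniqueness of the order-isomorphism between the two semi-well-orders makes the remaining identifications automatic.
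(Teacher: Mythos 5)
Your proposal is correct and is essentially the paper's own argument: the paper likewise reduces to $\mathcal{X}=\om^\om$ via Lemma \ref{lem:Bourgain-rank}, and then matches the two complete classifications of the $\bfm$-degrees of Baire-one functions --- the $(\alpha(f),{\rm type}(f))$ labeling from Theorem \ref{thm:Day-Downey-Westrick}, with Observation \ref{obs:Bourgain-left-right-reducible} controlling which types occur at successor versus limit ranks, and the $\Sigma^\pthree_\xi/\Pi^\pthree_\xi/\Delta^\pthree_\xi$-complete with $m$-$\sigma$-ji/jr labeling coming from Theorems \ref{thm:mthm1} and \ref{thm:main-theorem1} --- position by position in the same semi-well-order. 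The only cosmetic difference is that the paper records your block-by-block dictionary (including the off-by-one shift you flag) as explicit formulas, computing ${\rm rank}_\bfm(f)=\xi+3n$, $\xi+3n+1$, $\xi+3n+2$ in both labelings, rather than appealing to uniqueness of the order-isomorphism.
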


\begin{proof}
By Lemma \ref{lem:Bourgain-rank}, we can assume that $\mathcal{X}=\om^\om$ and $\delta={\rm id}$.
By induction.
If $\alpha(f)$ is successor, then ${\rm type}(f)\in\{{\tt l},{\tt r},{\tt o},{\tt t}\}$ by Observation \ref{obs:Bourgain-left-right-reducible} (1).
If $\alpha(f)$ is limit, then ${\rm type}(f)\in\{{\tt f},{\tt t}\}$ by Observation \ref{obs:Bourgain-left-right-reducible} (2) and (3).
Moreover, it is easy to see that every such type is realized by some function whenever $\alpha(f)>1$.
One can see that if $\alpha(f)=1$ then ${\rm type}(f)\in\{{\tt o},{\tt t}\}$, that is, either $f$ is constant or $f$ is nonconstant and continuous.

Let $\xi$ be a limit ordinal.
If $\xi=0$, put ${\tt c}={\tt o}$; otherwise put ${\tt c}={\tt f}$.
By Theorem \ref{thm:Day-Downey-Westrick}, one can easily see that for any $n\in\om$,
\begin{align*}
\mbox{${\rm rank}_\mathbf{m}(f)=\xi+3n$} & \iff \mbox{$\alpha(f)=1+\xi+n$ and ${\rm type}(f)={\tt c}$}\\
\mbox{${\rm rank}_\mathbf{m}(f)=\xi+3n+1$} & \iff \mbox{$\alpha(f)=1+\xi+n$ and ${\rm type}(f)={\tt t}$}\\
\mbox{${\rm rank}_\mathbf{m}(f)=\xi+3n+2$} & \iff \mbox{$\alpha(f)=1+\xi+n+1$ and ${\rm type}(f)\in\{{\tt l},{\tt r}\}$,}
\end{align*}
where ${\rm rank}_\mathbf{m}(f)$ denotes the $\mathbf{m}$-rank of $f$.
The proof of Theorems \ref{thm:mthm1} and Theorem \ref{thm:main-theorem1} shows that
\begin{align*}
\mbox{${\rm rank}_\mathbf{m}(f)=\xi+3n$} & \iff \mbox{$\sep{f}$ is $\Delta_{1+\xi+n}^\pthree$-complete and $m$-jr}\\
\mbox{${\rm rank}_\mathbf{m}(f)=\xi+3n+1$} & \iff \mbox{$\sep{f}$ is $\Delta_{1+\xi+n}^\pthree$-complete and $m$-ji}\\
\mbox{${\rm rank}_\mathbf{m}(f)=\xi+3n+2$} & \iff \mbox{$\sep{f}$ is $\Sigma_{1+\xi+n}^\pthree$- or $\Pi_{1+\xi+n}^\pthree$-complete.}
\end{align*}

This concludes the proof.
\end{proof}

\subsubsection{Separation rank}

We characterize the Bourgain rank in the context of the Hausdorff difference hierarchy by modifying the $\alpha_1$-rank introduced by Elekes-Kiss-Vidny\'anszky \cite{EKV16}.
For $\mathbf{\Gamma}\in\{\mathbf{\Sigma},\mathbf{\Pi},\mathbf{\Delta}\}$, let $\xi$-$\mathbf{\Gamma}^0_1$ be the corresponding pointclasses in the $\xi^{th}$ level of the Hausdorff difference hierarchy, that is, $\xi$-$\mathbf{\Gamma}^0_1$ is equal to $\Gamma_\xi$.
For a Baire-one function $f\colon\om^\om\to\mathbb{R}$ and rationals $p<q$, let $\alpha^{\rm sep}_1(f;p,q)$ be the least ordinal $\alpha$ such that an $\alpha$-$\mathbf{\Delta}^0_1$ set separates $\{f\leq p\}$ from $\{f\geq q\}$.
Then, we define
\[\alpha^{\rm sep}_1(f)=\sup_{p<q}\alpha^{\rm sep}_1(f;p,q).\]

\begin{obs}\label{obs:sep-and-Wadge}
Let $f\colon \om^\om\to\mathbb{R}$ be a Baire-one function.
Then, $\alpha^{\rm sep}_1(f)\leq\xi$ iff $\sep{f}\leq^\three_{mw}E_\xi$, where $E_\xi$ is an $m$-$\sigma$-ji $\Delta_\xi$-complete subset of $\om^\om$.
\end{obs}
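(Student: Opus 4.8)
The plan is to reduce the statement to a column-wise separation analysis of $\sep{f}$, exploiting that $E_\xi$ is simultaneously $m$-$\sigma$-ji and $\Delta_\xi$-complete. Since $E_\xi$ is $m$-$\sigma$-ji, Lemma \ref{lem:m-deg-sigma-ji} gives $\sep{f}\leq^\three_{mw}E_\xi$ iff $\sep{f}\leq_wE_\xi$, so it suffices to characterize plain $\three$-Wadge reducibility of $\sep{f}$ to $E_\xi$. Viewing $[\mathbb{Q}]^2$ as $\om$, we have $\sep{f}=\bigoplus_{\langle p,q\rangle}\sep{f}\upto\langle p,q\rangle$, and since this join is the least upper bound of its columns in the $\three$-Wadge degrees (the map $\langle p,q\rangle\fr X\mapsto\theta_{p,q}(X)$ is continuous on each clopen column, and each column embeds into the join via $X\mapsto\langle p,q\rangle\fr X$), I would show that $\sep{f}\leq_wE_\xi$ holds iff $\sep{f}\upto\langle p,q\rangle\leq_wE_\xi$ holds for every pair $p<q$.

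The heart of the argument is then the equivalence
\[\sep{f}\upto\langle p,q\rangle\leq_wE_\xi\iff\alpha^{\rm sep}_1(f;p,q)\leq\xi.\]
Here I would use that $E_\xi$ is $\Delta_\xi$-complete, together with the identification $\xi$-$\De{1}=\Delta_\xi$ and the closure of $\Delta_\xi$ under complements and continuous preimages. For the forward direction, given a witness $\theta$ for $\sep{f}\upto\langle p,q\rangle\leq_wE_\xi$, reading off the definition of $\sep{f}$ and of $\leq_\three$ shows that $\theta^{-1}[E_\xi]$ is a $\Delta_\xi$ set containing $\{f\geq q\}$ and disjoint from $\{f\leq p\}$; its complement is then a $\Delta_\xi$ set separating $\{f\leq p\}$ from $\{f\geq q\}$, so $\alpha^{\rm sep}_1(f;p,q)\leq\xi$. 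For the converse, a separating $\Delta_\xi$ set may be reoriented (using self-duality, i.e. closure of $\Delta_\xi$ under complement) into a set $D\supseteq\{f\geq q\}$ with $D\cap\{f\leq p\}=\emptyset$; composing a $\three$-Wadge reduction of $D$ to the complete set $E_\xi$ with the definition of $\sep{f}$ yields $\sep{f}\upto\langle p,q\rangle\leq_wE_\xi$. Taking the supremum over all pairs $p<q$ then converts this column-wise equivalence into $\sep{f}\leq_wE_\xi\iff\alpha^{\rm sep}_1(f)\leq\xi$, and combining with the first step completes the proof.

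The step I expect to require the most care is the separation equivalence: one must match the one-sided inequality $\sep{f}\upto\langle p,q\rangle(X)\leq_\three E_\xi(\theta(X))$, which constrains $\theta$ only where $f(X)\leq p$ or $f(X)\geq q$ and leaves the $\bot$-region entirely free, against the genuinely two-sided notion of separating $\{f\leq p\}$ from $\{f\geq q\}$. Getting the orientation right and invoking exactly the closure properties (complement and continuous preimage) of the Hausdorff difference class $\xi$-$\De{1}=\Delta_\xi$ is what guarantees the transfer is level-exact rather than off by one.
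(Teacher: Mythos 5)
Your proposal is correct and takes essentially the same route as the paper's (one-sentence) proof, which simply asserts the chain of equivalences: separation of $\{f\leq p\}$ from $\{f\geq q\}$ by $\Delta_\xi$ sets for all $p<q$, $\three$-Wadge reducibility of $\sep{f}$ to a $\Delta_\xi$-complete set, and $\three$-$m$-Wadge reducibility to an $m$-$\sigma$-ji $\Delta_\xi$-complete set. You have merely filled in the details the paper leaves implicit: Lemma \ref{lem:m-deg-sigma-ji} for the last equivalence, the clopen-column pasting for the least-upper-bound step, and the complement/preimage closure of $\Delta_\xi$ for the columnwise separation analysis.
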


\begin{proof}
It is clear that a $\xi$-$\mathbf{\Delta}^0_1$ set (i.e., a $\Delta_\xi$ set) separates $\{f\leq p\}$ and $\{f\geq q\}$ for any $p,q$ if and only if $\sep{f}$ is $\three$-Wadge reducible to a $\Delta_\xi$-complete set, and it is equivalent to saying that $\sep{f}$ is $\three$-$m$-Wadge reducible to an $m$-$\sigma$-ji $\Delta_\xi$-complete set.
\end{proof}

Generally, for a Baire-one function $f\colon\mathcal{X}\to\mathbb{R}$, the {\em pre-separation rank} $\alpha^{\rm sep}(f)$ is defined by
\[\alpha^{\rm sep}(f)=\min_\delta\alpha_1^{\rm sep}(f\circ\delta),\]
where $\delta$ ranges over open continuous surjections from $\om^\om$ onto $\mathcal{X}$.

\begin{prop}\label{prop:Bourgain-equal-sep}
$\alpha(f)=\alpha^{\rm sep}(f)$.
\end{prop}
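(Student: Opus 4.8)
The plan is to prove the two inequalities $\alpha^{\rm sep}(f)\leq\alpha(f)$ and $\alpha(f)\leq\alpha^{\rm sep}(f)$ separately, in both cases reducing to the case $\mathcal{X}=\om^\om$. By Lemma \ref{lem:Bourgain-rank}, the Bourgain rank $\alpha(f)$ is invariant under precomposition with any open continuous surjection $\delta$, so $\alpha(f)=\alpha(f\circ\delta)$ for every such $\delta$; meanwhile $\alpha^{\rm sep}(f)$ is by definition the minimum of $\alpha_1^{\rm sep}(f\circ\delta)$ over all such $\delta$. Thus it suffices to establish, for the zero-dimensional case, the precise relationship between the Bourgain rank $\alpha(g)$ of a Baire-one $g\colon\om^\om\to\mathbb{R}$ and its separation rank $\alpha_1^{\rm sep}(g)$, and then to show that the minimum over $\delta$ is actually attained at the value $\alpha(f)$.

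First I would translate everything through the dictionary already built in the excerpt. By Observation \ref{obs:sep-and-Wadge}, the condition $\alpha_1^{\rm sep}(g)\leq\xi$ is equivalent to $\sep{g}\leq^\three_{mw}E_\xi$ for an $m$-$\sigma$-ji $\Delta_\xi$-complete set $E_\xi$. On the other side, Corollary \ref{cor:Wadge-Bourgain} precisely classifies the Wadge-theoretic position of $\sep{g}$ in terms of $\alpha(g)$ and ${\rm type}(g)$. Combining these, the core computation is to read off from Corollary \ref{cor:Wadge-Bourgain} exactly when $\sep{g}$ reduces to an $m$-$\sigma$-ji $\Delta_\xi$-complete set. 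Since an $m$-$\sigma$-ji $\Delta_\xi$-complete set sits strictly above every $\Sigma_\eta^\pthree,\Pi_\eta^\pthree$ for $\eta<\xi$ and strictly below the $\Sigma_\xi^\pthree$- and $\Pi_\xi^\pthree$-complete functions, the reduction $\sep{g}\leq^\three_{mw}E_\xi$ holds exactly when $\alpha(g)\leq\xi$ (accounting for the successor/limit bookkeeping in Corollary \ref{cor:Wadge-Bourgain}). This yields $\alpha_1^{\rm sep}(g)=\alpha(g)$ in the zero-dimensional case, which already gives $\alpha(f)=\alpha(f\circ\delta)=\alpha_1^{\rm sep}(f\circ\delta)$ for every open continuous surjection $\delta$, and hence $\alpha^{\rm sep}(f)=\min_\delta\alpha_1^{\rm sep}(f\circ\delta)=\alpha(f)$.

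In carrying out the $\alpha_1^{\rm sep}(g)=\alpha(g)$ step, I would argue each inequality at the level of the derivative operator. For $\alpha_1^{\rm sep}(g;p,q)\leq\alpha(g;p,q)$, the transfinite derivation $D_{g,p,q}^\nu\om^\om$ stabilizes to $\emptyset$ at stage $\alpha(g;p,q)$, and the successive differences of the sets $\{g\leq p\}$ and $\{g\geq q\}$ along this derivation assemble into a Hausdorff difference of length $\alpha(g;p,q)$ separating $\{g\leq p\}$ from $\{g\geq q\}$, giving a $\Delta_{\alpha(g;p,q)}$ separator. For the reverse, a $\Delta_\xi$ separator of $\{g\leq p\}$ from $\{g\geq q\}$ forces the $(g;p,q)$-derivation to terminate within $\xi$ steps, since each derivative strictly descends the difference-hierarchy rank of any separator restricted to the remaining set. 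Taking suprema over $p<q$ then gives $\alpha_1^{\rm sep}(g)=\alpha(g)$.

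The main obstacle I anticipate is the bookkeeping around the shift between the intrinsic Bourgain rank and the Wadge rank, namely the ``$1+\xi+n$'' versus ``$\xi+3n$'' indexing visible in Corollary \ref{cor:Wadge-Bourgain}, together with the limit/successor and cofinality subtleties: when $\alpha$ is a limit ordinal of countable cofinality the $m$-$\sigma$-ji $\Delta_\alpha$-complete set exists in $\om^\om$ but, by Lemma \ref{lem:non-proper-Wadge-for-reals}(5), is not realized by $\sep{g}$ on a compact domain, so one must be careful that $E_\xi$ is taken as a subset of $\om^\om$ (as Observation \ref{obs:sep-and-Wadge} indeed specifies) rather than demanding realizability. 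A secondary subtlety is that when $\alpha(f)$ is a limit ordinal attained as $\alpha(f,p,q)$ on a noncompact domain there is no witnessing point of maximal rank, as noted after the definition of the Bourgain rank; one should check that the derivation-to-separator construction still goes through at limit stages without such a witness. Once these indexing conventions are matched, the identity $\alpha(f)=\alpha^{\rm sep}(f)$ follows directly from the invariance in Lemma \ref{lem:Bourgain-rank} and the classification in Corollary \ref{cor:Wadge-Bourgain}.
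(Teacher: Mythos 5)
Your proposal is correct and is essentially the paper's own proof: the paper likewise establishes, for each $\xi$, the chain $\alpha(f)\leq\xi\iff \sep{f\circ\delta}\leq^\three_{mw}E_\xi\iff \alpha^{\rm sep}_1(f\circ\delta)\leq\xi$ by combining Corollary \ref{cor:Wadge-Bourgain} with Observation \ref{obs:sep-and-Wadge}, the only difference being that the paper applies this to a single $\delta$ minimizing $\alpha_1^{\rm sep}(f\circ\delta)$, whereas you invoke Lemma \ref{lem:Bourgain-rank} to get $\alpha_1^{\rm sep}(f\circ\delta)=\alpha(f)$ for every open continuous surjection $\delta$ (which is exactly the remark the paper records immediately after the proposition). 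Your third paragraph's direct derivative-to-difference-hierarchy argument is redundant once the dictionary route is in place, since that content is already packaged inside Corollary \ref{cor:Wadge-Bourgain}, so it can safely be dropped.
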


\begin{proof}
Let $\delta$ be an open continuous surjection witnessing that $\alpha^{\rm sep}(f)=\alpha_1^{\rm sep}(f\circ\delta)$.
By Corollary \ref{cor:Wadge-Bourgain}, if $\xi$ is successer, $\xi=\eta+1$ say, $\alpha(f)=\xi$ iff $S_{f\circ\delta}$ is $\Gamma$-complete for some $\Gamma\in\{\Sigma_\eta,\Pi_\eta,\Delta_{\eta+1}^{jr},\Delta_{\eta+1}\}$.
If $\xi$ is limit, $\alpha(f)=\xi$ iff $S_{f\circ\delta}$ is either $\Delta_{\xi}^{jr}$- or $\Delta_{\xi}$-complete.
In any case, $\alpha(f)\leq\xi$ iff $S_{f\circ\delta}\leq^\three_{mw}E_\xi$.
By Observation \ref{obs:sep-and-Wadge}, the latter is equivalent to saying that $\alpha^{\rm sep}_1(f\circ\delta)\leq\xi$.
By our choice of $\delta$, we have $\alpha^{\rm sep}(f)=\alpha^{\rm sep}_1(f\circ\delta)$.
Consequently, $\alpha(f)\leq\xi$ iff $\alpha^{\rm sep}(f)\leq\xi$.
\end{proof}

By Lemma \ref{lem:Bourgain-rank}, the above proposition shows that $\alpha(f)=\alpha^{\rm sep}_1(f\circ\delta)$ holds for any open continuous surjection $\delta$.
In particular, this implies that the definition of $\alpha^{\rm sep}(f)$ is independent of the choice of $\delta$.

\section{$\mathbf{T}$-degrees and the Martin ordering}\label{sec:Martin-conjecture}

In this section, we will see a strong connection between the structure of Day-Downey-Westrick's $\mathbf{T}$-degrees of real-valued functions \cite{DDW} and the {\em Martin ordering on the uniform Turing degree invariant functions}.
Then, by combining with Becker's result \cite{Becker88}, we will see that the $\mathbf{T}$-degrees of real-valued functions form a well-order of type $\Theta$.

\subsection{Reducibility notions}\label{sec:reducibility-notions}

There are a number of works on Wadge-like classifications of functions on $\om^\om$.
For instance, Carroy \cite{Carroy13} adopted {\em continuous strong Weihrauch reducibility} as a tool to provide a reasonable classification of functions on $\om^\om$, where for $f,g\colon \om^\om\to\om^\om$, we say that $f$ is continuously strongly Weihrauch reducible to $g$ (written as $f\leq^c_{sW}g$) if there are continuous functions $\Phi,\Psi\colon \om^\om\to\om^\om$ such that
\[f=\Phi\circ g\circ\Psi.\]
Subsequently, Day-Downey-Westrick \cite{DDW} adopted {\em parallelized continuous strong (p.c.s.) Weihrauch reducibility} as a formalization of topological ``Turing reducibility'' for real-valued functions.
Given a function $h\colon \mathcal{X}\to\mathcal{Y}$, define the {\em parallelization of $h$} as the following function $\widehat{h}\colon \mathcal{X}^\om\to\mathcal{Y}^\om$:
\[\widehat{h}(\langle x_n:n\in\om\rangle)=\langle h(x_n):n\in\om\rangle.\]

We use $\leq\pcsW$ to denote the p.c.s.~Weihrauch reducibility, that is, $f\leq\pcsW g$ iff $f\leq_{sW}^c\widehat{g}$.
%
%
It is equivalent to say that there are continuous functions $\Phi:\subseteq\mathbb{R}^\om\to\mathbb{R}$ and $\Psi\colon \om\times\om^\om\to\om^\om$ such that
\[(\forall X\in\om^\om)\;f(X)=\Phi\left(\langle g(\Psi(i,X)):i\in\om\rangle\right).\]

We connect the reducibility notion $\leq\pcsW$ with the {\em uniform Martin conjecture} \cite{Steel82,SlaSte88}.
To explain this, we need to introduce several notions from computability theory.
For $X,Y\in 2^\om$, we say that $Y$ is Turing reducible to $X$ (written $Y\leq_TX$) if there is a partial computable function $\Phi:\subseteq 2^\om\to 2^\om$ such that $\Phi(X)=Y$.
We write $X\equiv_TY$ if $X\leq_TY$ and $Y\leq_TX$.
We fix an effective enumeration of all partial computable functions.
If $\Phi$ in the definition of Turing reducibility is given as the $e$-th partial computable function, then we say that $Y\leq_TX$ via $e$.

In 1960s, Martin conjectured that {\em natural} Turing degrees are well-ordered, and the successor rank is given by the Turing jump.
Usually, ``natural'' means that relativizability and Turing invariance, but we here also require the uniformity on Turing invariance.

\begin{definition}
A function $f\colon 2^\om\to 2^\om$ is {\em uniformly Turing degree invariant} (UI) if there is a function $u\colon \om^2\to\om^2$ such that
\[X\equiv_TY\mbox{ via }(d,e)\;\Longrightarrow\;f(X)\equiv_Tf(Y)\mbox{ via }u(d,e).\]

A function $f\colon 2^\om\to 2^\om$ is {\em uniformly Turing order preserving} (UOP) if there is a function $u\colon \om\to\om$ such that
\[X\leq_TY\mbox{ via }e\;\Longrightarrow\;f(X)\leq_Tf(Y)\mbox{ via }u(e).\]

For functions $f,g\colon  2^\om\to 2^\om$, we say that {\em $f$ is Martin reducible to $g$} (or {\em $f$ is Turing reducible to $g$ a.e.}; written $f\leq_T^\cone g$) if
\[(\exists C\in 2^\om)(\forall X\geq_TC)\;f(X)\leq_Tg(X).\]
\end{definition}

It is clear that every UOP function is UI.
The converse also holds up to the Turing equivalence a.e.

\begin{fact}[Becker \cite{Becker88}]
Every UI function is Turing equivalent to a UOP function a.e.
\end{fact}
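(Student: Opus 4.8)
The plan is to manufacture the order-preserving witness by a blind join construction that forces uniform order preservation syntactically, and then to show that this join collapses back to $f$ on a cone, the latter being where the uniform invariance of $f$ and the axiom of determinacy do the real work. Fix an effective enumeration $(\Phi_e)_{e\in\om}$ of the partial computable functionals and, for a real $C$ to be chosen as a cone base, define
\[
g(X)=\bigoplus_{e\in\om}h_e(X),\qquad h_e(X)=f(C\oplus\Phi_e(X))\text{ if }\Phi_e(X)\text{ is total, else }0^\om,
\]
so that $g(X)$ packages $f(Y)$ for all $Y\leq_T X$ (pushed above $C$) into a single real. The reason for the shift by $C$ is to kill the contributions of degrees below the cone, which would otherwise pollute $g(X)$ and make it incomparable with $f(X)$.

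First I would dispatch the two easy halves, both of which are purely formal and use nothing about $f$ beyond its being a function. To see that $g$ is UOP, suppose $X\leq_T X'$ via $e_0$ and let $s(e)$ be an index, computable from $e_0$, with $\Phi_{s(e)}=\Phi_e\circ\Phi_{e_0}$; then $\Phi_{s(e)}(X')=\Phi_e(X)$, so the $e$-th column of $g(X)$ is literally the $s(e)$-th column of $g(X')$, whence $g(X)\leq_T g(X')$ via an index depending only on $e_0$. Taking $e$ to be an index for the identity gives $h_e(X)=f(C\oplus X)$; by uniform invariance $f(C\oplus X)\equiv_T f(X)$ uniformly whenever $X\geq_T C$, so $f\leq_T g$ holds uniformly on the cone above $C$.

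The content lies in the reverse inequality $g\leq_T f$ on a cone, i.e.\ recovering, uniformly in $e$ and from the single oracle $f(X)$, the value $f(C\oplus\Phi_e(X))$. Writing $Y=C\oplus\Phi_e(X)$, so $C\leq_T Y\leq_T X$, uniform invariance already supplies half of this: since $Y\oplus X\equiv_T X$ via reductions computable uniformly from $e$, the UI witness $u$ yields $f(Y\oplus X)\equiv_T f(X)$ uniformly in $e$. Thus the whole reverse inequality reduces to a single uniform statement, namely that there is one index computing $f(Y)$ from $f(Y\oplus X)$ valid for cofinally many inputs, equivalently that $f$ is \emph{uniformly order preserving on a cone} (taking $g=f$ on the cone above $C$). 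Once this key lemma is in hand, the chain $f(Y)\leq_T f(Y\oplus X)\equiv_T f(X)$ assembles uniformly over all $e$ to give $g(X)\leq_T f(X)$ on the cone, and together with the previous paragraph we obtain $g\leq_T^\cone f$ and $f\leq_T^\cone g$, with $g$ globally UOP, as required.

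The main obstacle is precisely this key lemma: that a UI function is uniformly order preserving on a cone. I would approach it through Martin's cone theorem applied to the set $E=\{X:\exists Y\leq_T X\ (C\leq_T Y\text{ and }f(Y)\not\leq_T f(X))\}$, which is degree invariant because it refers only to the set of predecessor degrees of $X$ (up to $\equiv_T$) and to the degree of $f(X)$, both invariants of $\deg(X)$. Under $\mathsf{AD}$ either $E$ or its complement contains a cone; the complement case is exactly pointwise order preservation on a cone, which the uniform-invariance trick of the previous paragraph then upgrades to the required uniformity. The genuinely hard point, and the step I expect to fight with, is ruling out the case that $E$ contains a cone: here the existential witness $Y$ is adversarial, so no cheap diagonal contradiction is available, and one must use the uniformity witness $u$ in an essential way, via a Slaman–Steel style pointed-perfect-tree (or Becker's own) argument, to derive a contradiction from cone-large failure of monotonicity. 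A secondary but routine nuisance will be the bookkeeping that keeps every reduction uniform in $e$ so that the countable join is genuinely recoverable from the one oracle $f(X)$, together with extending the relativized $g$ below $C$ so that it is globally, not merely cone-wise, UOP.
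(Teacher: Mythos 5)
Your two formal halves are fine (the column-shuffling argument does make $g$ globally UOP, and the identity column gives $f\leq_T g$ pointwise on the cone), but the proposal has a fatal bug before the key lemma is even reached: the clause ``$h_e(X)=0^\om$ if $\Phi_e(X)$ is not total'' secretly encodes totality information into $g(X)$, and no oracle of the form $f(X)$ can recover it in general. Concretely, take $f=\mathrm{id}$, which is UI. For each $e$ let $e'$ be an index, computable from $e$, such that $\Phi_{e'}(Z)$ is the constant function $1^\om$ if $\Phi_e(Z)(0)\!\downarrow$ and is nowhere defined otherwise. Then column $e'$ of $g(X)$ is $C\oplus 1^\om$ if $\Phi_e(X)(0)\!\downarrow$ and is $0^\om$ otherwise, so reading one fixed bit of that column decides whether $\Phi_e(X)(0)$ halts. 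Hence $X'\leq_T g(X)$ for \emph{every} $X$, while $f(X)\oplus C\equiv_T X$ on the cone above $C$; so $g\not\leq^\cone_{\mathbf{T}}f$ anywhere, and your claimed equivalence fails already for the simplest UI function. This is not bookkeeping: any ``blind join over all $Y\leq_T X$'' must either decide which functionals are total (a $\Pi^0_2(X)$-complete problem, generally not below $f(X)$) or somehow make the total and partial cases indistinguishable, and neither is available.

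The second gap is that your key lemma --- a UI function is uniformly order preserving on a cone --- is where the entire content of the statement lives, and your sketch does not prove it. The Martin cone argument on $E$ handles only the easy alternative (the complement of $E$ containing a cone yields \emph{pointwise} order preservation); ruling out that $E$ contains a cone is exactly the hard theorem, and you offer no argument there. Even the advertised upgrade from pointwise to uniform order preservation is gapped: the sets $\{Y\oplus W: f(Y)\leq_T f(Y\oplus W)\mbox{ via }i\}$ are not degree invariant, so Martin's cone theorem does not apply to them; one needs the u.p.p.-tree partition lemma (Fact \ref{fact:Martin}) and then a nontrivial transfer from a tree back to a cone. Note that the paper itself does not prove this Fact but cites Becker \cite{Becker88}, whose argument is of a completely different character: it combines the Slaman--Steel dichotomy (constant a.e.\ versus increasing a.e.) with the classification, under {\sf AD}, of increasing UI functions as jump operators $J_\Gamma$ for reasonable pointclasses $\Gamma$, which are UOP by construction. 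No elementary syntactic construction of the kind you attempt is known, and the counterexample above suggests why the direct approach collapses.
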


However, this reducibility notion $\leq_T^\cone$ is badly behaved for constant functions.
In this article, we use the following variant of $\leq_T^\cone$:

\begin{definition}
For functions $f,g\colon  2^\om\to 2^\om$, we write $f\leq_\mathbf{T}^\cone g$ if
\[(\exists C\in 2^\om)(\forall X\geq_TC)\;f(X)\leq_Tg(X)\oplus C.\]
\end{definition}

A function $f\colon 2^\om\to 2^\om$ is {\em increasing a.e.}\ if there is $C\in 2^\om$ such that $f(X)\geq_TX$ for all $X\geq_TC$.
A function $f\colon 2^\om\to 2^\om$ is {\em constant a.e.}\ if there is $C\in 2^\om$ such that $f(X)\equiv_TC$ for all $X\geq_TC$.

\begin{fact}[Slaman-Steel \cite{SlaSte88}]\label{fact:Slaman-Steel}
For a UI function $f\colon 2^\om\to 2^\om$, either $f$ is constant a.e.\ or $f$ is increasing a.e.
\end{fact}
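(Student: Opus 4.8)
The plan is to reduce to the uniformly order-preserving case and then run a Martin-measure dichotomy, confining the genuine work to establishing constancy when domination fails. First I would invoke Becker's Fact (cited just above, \cite{Becker88}): every UI function is Turing equivalent a.e.\ to a UOP function $g$, and since Turing equivalence on a cone manifestly preserves both ``constant a.e.'' and ``increasing a.e.'', it suffices to prove the dichotomy for $g$. I may therefore assume from now on that $f$ itself is UOP, so that $X\leq_T Y$ implies $f(X)\leq_T f(Y)$, uniformly in the reduction witnesses.

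The second step is to apply Martin's cone theorem, which under {\sf AD} decides every Turing-invariant set of reals, so that the cones $\{X:X\geq_T C\}$ generate a countably complete ultrafilter on the degrees. Consider $S=\{X:X\leq_T f(X)\}$. Because $f$ is degree invariant, $X\equiv_T Y$ forces $f(X)\equiv_T f(Y)$, so membership in $S$ depends only on the degree of $X$; hence $S$ is Turing invariant and Martin's measure decides it. If $S$ contains a cone, then $f(X)\geq_T X$ on that cone, i.e.\ $f$ is increasing a.e., and we are done. The remaining case is that the complement of $S$ contains a cone, with some base $C$: for every $X\geq_T C$ we have $f(X)\not\geq_T X$.

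The heart of the argument, and the main obstacle, is to show that in this remaining case $f$ is constant on a cone. The plan is to represent the cone (under {\sf AD}) by a pointed perfect tree $T\geq_T C$ whose branches realize exactly the degrees above $T$, and to play two \emph{mutually generic} branches $A$ and $B$ off against each other. Using UOP we get $f(A),f(B),f(T)\leq_T f(A\oplus B)$ and $f(T)\leq_T f(A),f(B)$, so it suffices to collapse $f(A\oplus B)$ down to the fixed degree $f(T)$. The key point I would establish is an indiscernibility phenomenon: since $f(A\oplus B)\not\geq_T A\oplus B$, the output $f(A\oplus B)$ cannot recover the ``fresh'' generic information that distinguishes $A$ from $B$, and by the symmetry of the mutually generic construction this forces $f(A\oplus B)\equiv_T f(T)$, a degree independent of the chosen branches; countable completeness of the Martin measure then assembles a single cone on which $f(X)\equiv_T f(T)$, giving constancy. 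Carrying this out is precisely the technical core of Slaman--Steel \cite{SlaSte88}: one must formalize ``fresh information'' via the forcing associated with $T$ and verify that the failure of domination genuinely renders the two branches invisible to $f$. I expect this forcing-indiscernibility step — converting the inequality $f(X)\not\geq_T X$ into an actual equation between degrees — to be the delicate part, since it is where the non-domination hypothesis has to be leveraged in full.
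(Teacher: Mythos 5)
The paper offers no proof of this Fact at all --- it is quoted from Slaman--Steel \cite{SlaSte88} --- so your attempt has to stand on its own, and it does not. Your step 2 (applying Martin's cone theorem to the Turing-invariant set $\{X : X\leq_T f(X)\}$) is correct and standard, but step 1 is circular. Becker's theorem in \cite{Becker88} is a characterization of the UI functions that are \emph{increasing on a cone}: each such function agrees on a cone with a uniform jump operator $J_\Gamma$, which is UOP. The unrestricted statement you invoke --- \emph{every} UI function is Turing equivalent a.e.\ to a UOP function --- is obtained in the literature by combining Becker's theorem with precisely the Slaman--Steel dichotomy you are trying to prove: one must already know that a non-increasing UI function is constant a.e.\ (constants being trivially UOP) before Becker's theorem covers the rest. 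So the reduction to the UOP case assumes the conclusion. Nor is the reduction dispensable in your argument: for a merely UI function, uniformity transfers information only along Turing \emph{equivalences}, so the inequalities $f(T)\leq_T f(A)\leq_T f(A\oplus B)$ on which your third step rests are not even meaningful hypotheses-free statements about $f$.

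The second and larger gap is that the heart of the theorem is asserted rather than proved. Passing from $f(A\oplus B)\not\geq_T A\oplus B$ to $f(A\oplus B)\equiv_T f(T)$ \emph{is} the Slaman--Steel theorem; ``cannot recover the fresh generic information'' is not an argument, since a real can fail to compute $A\oplus B$ while its degree still depends on the pair $(A,B)$, and you explicitly defer the step that would rule this out to \cite{SlaSte88}. Moreover, even if the indiscernibility claim were granted, your architecture could not deliver the stated conclusion: it would give constancy of $f$ only on degrees of joins of sufficiently generic branches of $T$, and such degrees do not contain a cone. For instance, the degree of $T'$ is never realized: a sufficiently generic branch join has degree $G\oplus T$ with $G$ $1$-generic over $T$, and then $(G\oplus T)'\equiv_T G\oplus T'$, so $G\oplus T\equiv_T T'$ would force $T''\equiv_T T'$. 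Pointedness of $T$ covers every degree $\geq_T T$ only through the branches $T[Y]$, which are exactly the non-generic ones, so the countable completeness of the Martin measure has nothing to assemble into a cone. Any correct proof must handle all branches (equivalently, all degrees above the base) simultaneously, which is what Slaman--Steel's actual construction --- stabilizing the uniformity indices via Martin's pointed-tree partition lemma (Fact~\ref{fact:Martin}) and then carrying out a genuine recursion-theoretic collapsing argument --- is designed to do, and which nothing in your sketch replaces.
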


\begin{obs}\label{obs:bold-T-equal-light-T}
Let $f,g\colon  2^\om\to 2^\om$ be UI functions.
If $g$ is not constant a.e., then
\[f\leq_T^\cone g\iff f\leq_\mathbf{T}^\cone g.\]
\end{obs}

\begin{proof}
Assume that $f\leq_\mathbf{T}^\cone g$ via $C$.
By Fact \ref{fact:Slaman-Steel}, $g$ is increasing a.e.
Therefore, there is $D\geq_TC$ such that $g(X)\geq_TX$ for all $X\geq_TD$.
For such $X$, $f(X)\leq_Tg(X)\oplus C\leq_T g(X)$.
Hence, $f\leq_T^\cone g$.
\end{proof}

The $\leq^\cone_\mathbf{T}$-degrees of UOP functions forms a well-order of height $\Theta$, and the successor rank is given by the Turing jump (cf.~Steel \cite{Steel82} and Becker \cite{Becker88}).

By ${\rm UOP}$ we denote the collection of UOP functions, and by $\mathcal{F}$ we denote the collection of real-valued functions on $\om^\om$.
In this section, we will show the following.

\begin{theorem}\label{thm:pcsW-equal-Martin}
The identity map induces an isomorphism between quotients of $({\rm UOP},\leq_\mathbf{T}^\cone)$ and $(\mathcal{F},\leq\pcsW)$.
\end{theorem}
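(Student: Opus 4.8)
The plan is to establish the order-equivalence $f\leq_\mathbf{T}^\cone g\iff f\leq\pcsW g$ for all UOP functions $f,g$---viewed as real-valued functions via the Cantor embedding $2^\om\hookrightarrow\mathbb{R}$ and the fixed admissible representations of Section \ref{sec:Bourgain-rank}---and then to check that every $\leq\pcsW$-degree in $\mathcal{F}$ contains a UOP representative. Given these two facts, the identity map descends to a well-defined order isomorphism on the quotients: order preservation and reflection are precisely the two directions of the equivalence, injectivity on the quotient is then immediate, and the representative statement is exactly surjectivity onto $(\mathcal{F},\leq\pcsW)$.

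For the easy direction $f\leq\pcsW g\Rightarrow f\leq_\mathbf{T}^\cone g$, I would fix continuous $\Phi,\Psi$ with $f(X)=\Phi(\langle g(\Psi(i,X)):i\in\om\rangle)$ and let $C$ code $\Phi$, $\Psi$, and the order-preservation witness $u_g$ of $g$. For every $X\geq_TC$ and every $i$, the value $\Psi(i,X)$ is computed from $X$ via an index $e_i$ computable from $C$, uniformly in $i$; since $g$ is UOP, $g(\Psi(i,X))=\Phi_{u_g(e_i)}(g(X))$, and as $u_g$ is coded in $C$ this gives $\langle g(\Psi(i,X)):i\in\om\rangle\leq_Tg(X)\oplus C$. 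Applying the continuous $\Phi$ (also coded in $C$) yields $f(X)\leq_Tg(X)\oplus C$, which is $f\leq_\mathbf{T}^\cone g$; note this uses only that $g$ is UOP.

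The substantive direction is $f\leq_\mathbf{T}^\cone g\Rightarrow f\leq\pcsW g$, which I would split into uniformization and a cone-to-everywhere passage. Suppose $f(X)\leq_Tg(X)\oplus C$ for all $X\geq_TC$. For each index $d$ let $\mathbf{A}_d$ be the set of Turing degrees $\mathbf{x}\geq\deg(C)$ such that some (equivalently, since $f,g$ are UI, every) representative $X\in\mathbf{x}$ satisfies $\Phi_d(g(X)\oplus C)=f(X)$; these are Turing-invariant and cover the cone above $\deg(C)$. By Martin's cone theorem and countable completeness of the cone filter under ${\sf AD}+{\sf DC}$ (cf.\ Steel \cite{Steel82}, Becker \cite{Becker88}), some $\mathbf{A}_{d_0}$ contains a cone, say above $C'\geq_TC$; transferring representatives through the uniform invariance data of $f,g$ then produces a single index $d_1$ with $f(X)=\Phi_{d_1}(g(X)\oplus C)$ for all $X\geq_TC'$. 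For the second step, given arbitrary $X\in\om^\om$ I would set $\Psi(0,X)=X\oplus C'$, which is continuous and satisfies $X\oplus C'\geq_TC'$, so $f(X\oplus C')=\Phi_{d_1}(g(X\oplus C')\oplus C)$; since $X\leq_TX\oplus C'$ via a fixed index $e_0$ and $f$ is UOP, $f(X)=\Phi_{u_f(e_0)}(f(X\oplus C'))$. Composing these partial computable functionals, with $C,C'$ and the constant $u_f(e_0)$ built into the outer map, gives continuous $\Phi,\Psi$ with $f(X)=\Phi(g(\Psi(0,X)))$ for all $X$; every query arising lies on the cone above $C'$, so all computations converge and $\Phi$ may be taken total continuous, establishing $f\leq\pcsW g$.

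Finally, for surjectivity I would show that every $f\in\mathcal{F}$ satisfies $f\equiv\pcsW F$ for some UOP $F$. After encoding $\om^\om$ and $\mathbb{R}$ into $2^\om$ through the fixed representations one may assume $f\colon2^\om\to2^\om$; define the uniform invariantization $F(X)=\langle f(\Phi_e(X)):e\in\om\rangle$, with a dummy value when $\Phi_e(X)$ is undefined or outside $2^\om$. Then $F$ is UOP, because $X\leq_TY$ via $a$ makes each $\Phi_e(X)$ equal to $\Phi_{c(e,a)}(Y)$ for a computable composition $c$, so $F(X)$ is a computable rearrangement of $F(Y)$; and $F\equiv\pcsW f$, since $f(X)$ is the identity-component of $F(X)$ while conversely $F(X)$ is a continuous rearrangement of $\langle f(\Phi_i(X)):i\in\om\rangle$. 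Combined with the equivalence above, this shows the identity map is an order isomorphism of the two quotients. The main obstacle is the uniformization step: collapsing the pointwise on-a-cone Martin reduction to a single Turing functional is exactly where ${\sf AD}$, through the cone measure, and the uniform invariance of $f$ and $g$ are indispensable.
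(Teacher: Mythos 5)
Your high-level skeleton (prove $f\leq_\mathbf{T}^\cone g\iff f\leq\pcsW g$ for UOP functions, then show every $\leq\pcsW$-degree in $\mathcal{F}$ contains a UOP representative) is the same as the paper's, and your easy direction $f\leq\pcsW g\Rightarrow f\leq_\mathbf{T}^\cone g$ is essentially the paper's argument. However, both substantive steps contain genuine gaps.

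First, the uniformization. Your sets $\mathbf{A}_d$ are not degree-invariant: the parenthetical ``some (equivalently, since $f,g$ are UI, every) representative'' is false. If $\Phi_d(g(X)\oplus C)=f(X)$ and $Y\equiv_TX$, then uniform invariance only yields $f(Y)=\Phi_{d'}(g(Y)\oplus C)$ for an index $d'$ obtained by composing $d$ with $u_f,u_g$ applied to the indices witnessing $X\equiv_TY$ --- and those indices vary with $X$ and $Y$. So Martin's cone theorem does not apply to the ``every representative'' version (those sets need not cover the cone), and with the ``some representative'' version your subsequent ``transferring representatives'' step is circular: it again produces an index depending on $X$, not a single $d_1$ valid on a cone. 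This non-uniformity is precisely the difficulty that the uniformly pointed perfect tree technique exists to overcome. The paper (proof of Lemma \ref{lem:UOP-parallelizable}) instead applies Fact \ref{fact:Martin}, which holds for arbitrary, non-invariant countable partitions, to obtain a u.p.p.\ tree $T$ and a single index $e$ such that $f(T[X])=\Phi_e(g(T[X])\oplus C)$ for \emph{all} $X$; u.p.p.-ness gives a single index $d$ with $X\leq_TT[X]$ via $d$, and UOP-ness of $f$ then yields $f=\Phi_{u_f(d)}\circ\Phi_e^C\circ g\circ T[\cdot]$. Your second half (inner map $X\mapsto X\oplus C'$, outer map from UOP-ness of $f$) would work verbatim with $T[X]$ in place of $X\oplus C'$; as written, it rests on a uniformized index on a cone that you have not obtained.

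Second, surjectivity. Your invariantization $F(X)=\langle f(\Phi_e(X)):e\in\om\rangle$ (with a dummy value on divergence) is indeed UOP, and $f\leq\pcsW F$ holds via projection, but the claimed converse $F\leq_{sW}^{c}\widehat{f}$ fails: the inner maps $X\mapsto\Phi_e(X)$ are partial, totality of $\Phi_e(X)$ is a $\Pi^0_2$ condition, and no continuous reduction can decide whether to substitute the dummy. Concretely, take $f={\rm id}$ on $2^\om$: anything $\leq_{sW}^{c}\widehat{{\rm id}}$ is continuous, but your $F$ is discontinuous --- for the functional that searches its oracle for a $1$ and outputs $1^\om$ upon success, the corresponding column of $F$ equals $1^\om$ for every $X\neq 0^\om$ and equals the dummy at $X=0^\om$. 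So $F\not\equiv\pcsW f$ already for continuous non-constant $f$, and the construction cannot prove surjectivity. There is no such one-line invariantization: the tension is that UOP-ness wants closure under composition with arbitrary Turing reductions, while reducibility to $\widehat{f}$ wants total inner maps. The paper's surjectivity argument instead runs through the Wadge classification of $\sep{f}$ (Theorem \ref{thm:main-theorem1}): continuous non-constant functions are $\equiv\pcsW{\rm id}$; nonselfdual degrees are realized by Becker's pointclass jump operators $J_\Gamma$ for reasonable pointclasses; selfdual degrees of successor rank collapse onto the nonselfdual degree below; and selfdual limit degrees of countable cofinality are realized by Becker's $J_\alpha^\ast$ on a u.p.p.\ tree. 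That case analysis is the real content of surjectivity and cannot be bypassed by your $F$.
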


%
%

As a corollary, by Observation \ref{obs:bold-T-equal-light-T}, the identity map induces an isomorphism between the Martin ordering on the UOP operators which is not constant a.e.\ and the parallel continuous strong Weihrauch degrees of real-valued non-constant functions.
Theorem \ref{thm:pcsW-equal-Martin} also concludes the following.

\begin{theorem}\label{thm:pcsW-equal-Martin2}
The p.c.s.~Weihrauch degrees of real-valued functions on $\om^\om$ form a well-order of type $\Theta$.
Moreover, if $g\colon 2^\om\to 2^\om$ has nonzero p.c.s.~Weihrauch rank $\alpha$, then $\widehat{g}'$ has p.c.s.~Weihrauch rank $\alpha+1$, where $h'(x)$ is defined as the Turing jump of $h(x)$.
\end{theorem}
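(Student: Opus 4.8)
The plan is to deduce both assertions from the isomorphism established in Theorem \ref{thm:pcsW-equal-Martin} together with the classical structure theory of the Martin ordering. For the first assertion there is essentially nothing to do beyond transport of structure: Theorem \ref{thm:pcsW-equal-Martin} gives that the identity map induces an isomorphism of quotient orderings $(\mathcal{F},\leq\pcsW)\cong({\rm UOP},\leq_\mathbf{T}^\cone)$, and, as recalled immediately after that theorem, the ordering $({\rm UOP},\leq_\mathbf{T}^\cone)$ is a well-order of height $\Theta$ by Steel \cite{Steel82} and Becker \cite{Becker88}. Hence $(\mathcal{F},\leq\pcsW)$ is a well-order of type $\Theta$, which is exactly the first claim.

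For the successor statement I would transfer the jump across the isomorphism, using that in the Martin ordering the successor rank is computed by the pointwise Turing jump. First I would note that $g\equiv\pcsW\widehat g$ by the standard idempotence of parallelization: $\widehat g\leq\pcsW g$ is the identity reduction $\widehat g\leq^c_{sW}\widehat g$, while $g\leq\pcsW\widehat g$ follows from $g\leq^c_{sW}\widehat g$, witnessed by the constant-sequence input $\Psi(X)=\langle X,X,\dots\rangle$ and a projection, composed with $\widehat g\leq^c_{sW}\widehat{\widehat g}$. Thus $\widehat g$ also has p.c.s.~Weihrauch rank $\alpha$, and under Theorem \ref{thm:pcsW-equal-Martin} its degree corresponds to the $\leq_\mathbf{T}^\cone$-degree of a UOP function $u$ of Martin rank $\alpha$, where one passes to a genuine UOP representative using Becker's Fact that every UI function is a.e.\ Turing equivalent to a UOP one. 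The hypothesis $\alpha\neq 0$ guarantees, via Fact \ref{fact:Slaman-Steel}, that $u$ is increasing a.e.\ rather than constant a.e., so that Observation \ref{obs:bold-T-equal-light-T} lets us pass freely between $\leq_T^\cone$ and $\leq_\mathbf{T}^\cone$ and invoke the classical computation of the successor. Since the $\leq_\mathbf{T}^\cone$-successor of a rank-$\alpha$ UOP function is its pointwise jump $u'$, of rank $\alpha+1$, it remains to identify $\widehat g'$, given by $\widehat g'(x)=(\widehat g(x))'$, with this Martin jump under the identity isomorphism, and to verify that $\widehat g'$ is parallelizable so that it is a bona fide representative of a p.c.s.~Weihrauch degree; this parallelizability is precisely the reason the jump is applied to the parallelization $\widehat g$ rather than to $g$.

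The main obstacle I anticipate is this last matching step: confirming that the operation $\widehat g\mapsto\widehat g'$ on the p.c.s.~Weihrauch side is carried by the isomorphism to the pointwise jump $u\mapsto u'$ on the Martin side. The delicate point is the interaction between parallelization and the Turing jump, since Steel's jump must be applied to an (a.e.) degree-invariant function, so the correct operand is produced by parallelizing first. Concretely, I would check that $\widehat g$ is, up to $\leq_\mathbf{T}^\cone$-equivalence, a UOP function, that its jump $\widehat g'$ is again UOP (from an index witnessing $X\leq_T Y$ one uniformly computes an index witnessing $(X)'\leq_T(Y)'$, so uniform order preservation passes to the jump), and that $\widehat g'$ is parallelizable. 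Granting these, the computation of the successor rank in $({\rm UOP},\leq_\mathbf{T}^\cone)$ transfers verbatim and yields that $\widehat g'$ has p.c.s.~Weihrauch rank $\alpha+1$.
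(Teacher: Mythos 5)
Your handling of the first assertion is exactly the paper's argument (transport the Steel--Becker well-ordering of $({\rm UOP},\leq_\mathbf{T}^\cone)$ across Theorem \ref{thm:pcsW-equal-Martin}) and is fine. The problem is the successor assertion: what you flag as ``the main obstacle'' is in fact the entire mathematical content of the theorem, and the three checks you propose do not close it. UOP-ness is a property of a function, not of its $\equiv\pcsW$- or $\equiv_\mathbf{T}^\cone$-degree, whereas the jump is applied pointwise to the actual function $\widehat{g}$. For an arbitrary non-constant $g$ the function $\widehat{g}$ is not UOP, and in general not even Turing-degree invariant (re-coding the columns of $X=\bigoplus_n X_n$ can change the degree of $\bigoplus_n g(X_n)$ without changing that of $X$; this can fail even when $g$ itself is UOP). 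So your check that ``its jump $\widehat{g}'$ is again UOP'' is vacuous for the operand that actually matters. Nor can one pass from $\widehat{g}\equiv\pcsW u$ to $\widehat{g}'\equiv\pcsW u'$ by soft reasoning: writing $\widehat{g}=k\circ\widehat{u}\circ h$ with $k$ continuous, hence $C$-computable for some oracle $C$, and jumping the resulting inequality $\widehat{g}(X)\leq_T\widehat{u}(h(X))\oplus C$ gives only $\widehat{g}(X)'\leq_T\bigl(\widehat{u}(h(X))\oplus C\bigr)'$, and the jump of a join is in general not computable from the join of the jumps together with any fixed oracle $D$. This is precisely where ``the computation of the successor rank transfers verbatim'' breaks down: the pointwise jump is not prima facie well-defined (or monotone) on $\leq\pcsW$-degrees, and establishing that it is requires a new argument.

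That argument is the heart of the paper's proof, which you are missing: a monotonicity claim stating that for non-constant $f,g\colon 2^\om\to 2^\om$, if $f\leq\pcsW g$ then $f'\leq_{sW}^c\widehat{g}'$. The key idea is to absorb the oracle \emph{before} taking any jump. Since $g$ is non-constant, as in Lemma \ref{lem:remove-strong} there is a fixed sequence $\langle Z_n^C:n\in\om\rangle$ such that $\widehat{g}(\langle Z_n^C:n\in\om\rangle)$ computes $C$; interleaving these with the queries $h_n(X)$ produces a continuous $r$ with $\widehat{g}(h(X))\oplus C\leq_T\widehat{g}(r(X))$ via an index independent of $X$. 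Only then is the jump applied, and its uniformity (the jump operator is UOP) yields $f(X)'\leq_T\widehat{g}(r(X))'$ via a fixed index, i.e., $f'\leq_{sW}^c\widehat{g}'$ witnessed by $(r,\Phi_c)$. With this claim in hand, your outline does complete: one takes the UOP representative $u$ of rank $\alpha$, applies Steel's theorem to get that $u'$ has rank $\alpha+1$, and uses the claim (in both directions, together with $(\widehat{u})'\leq_{sW}^c u'$, which follows from UOP-ness of $u$) to identify the degree of $\widehat{g}'$ with that of $u'$. Without the claim, your proposal proves the well-ordering statement but leaves the successor statement unproved.
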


In particular, for any parallelizable function $g$ (that is, $g\equiv_{sW}^c\widehat{g}$), if $g$ has p.c.s.~Weihrauch rank $\alpha$, then $g'$ has p.c.s.~Weihrauch rank $\alpha+1$.

\subsection{Injectivity}

We first show that the identity map gives an embedding of $({\rm UOP},\leq_T^\cone)$ into $(\mathcal{F},\leq\pcsW)$.
We will use the following notion.
A {\em uniformly pointed perfect tree} (u.p.p.\ tree) is a perfect tree $T\subseteq 2^{<\om}$ such that $T\leq_TT[X]$ via some index $e$ independent of $X\in 2^\om$, where we often think of a perfect tree $T$ as a continuous embedding $T[\cdot]\colon 2^\om\to 2^\om$, that is, $T[X]$ is the $X$-th infinite path through $T$.

\begin{fact}[Martin; see \cite{MarSlaSte}]\label{fact:Martin}
For any countable partition $(P_i)_{i\in\om}$ of $2^\om$, there is $i\in\om$ such that $P_i$ includes the set of all infinite paths through a u.p.p.\ tree.
\end{fact}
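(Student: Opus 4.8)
The plan is to prove this by a refinement-and-fusion argument based on the determinacy of games played along a perfect tree. The two ingredients are a \emph{refinement lemma}, which splits any u.p.p.\ tree into a piece homogeneous for an arbitrary set, and a \emph{fusion}, which glues together countably many such refinements.

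First I would isolate the following refinement lemma: for any u.p.p.\ tree $T$ and any set $A\subseteq 2^\om$, there is a u.p.p.\ subtree $T'\subseteq T$ with either $[T']\subseteq A$ or $[T']\cap A=\emptyset$. To prove it, consider the game in which Players I and II alternately play bits, producing $Z\in 2^\om$, with I declared the winner exactly when $T[Z]\in A$. Under ${\sf AD}$ this game is determined. If I wins with strategy $\sigma$, the set of plays consistent with $\sigma$ forms a perfect set of $Z$'s whose images $T[Z]$ all lie in $A$; interleaving $\sigma$ into the branches yields a perfect subtree $T'$ every branch of which uniformly computes $\sigma$ and, by pointedness of $T$, computes $T$, hence computes $T'$ itself. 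Thus $T'$ is a u.p.p.\ tree with $[T']\subseteq A$. If instead II wins, the symmetric construction gives a u.p.p.\ subtree with $[T']\cap A=\emptyset$.

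Next I would set up the iteration. Start from $T_0=2^{<\om}$, which is a u.p.p.\ tree with $[T_0]=2^\om$ since the full tree is computable. Given $T_n$, apply the refinement lemma with $A=P_n$ to obtain a u.p.p.\ subtree $T_{n+1}\subseteq T_n$. If at some stage $[T_{n+1}]\subseteq P_n$, we are done with $i=n$. Otherwise $[T_{n+1}]\cap P_n=\emptyset$ for every $n$, and since $[T_{n+1}]\subseteq[T_n]$ already avoids $P_0,\dots,P_{n-1}$, it avoids $P_0,\dots,P_n$. Arranging, as usual, that $T_{n+1}$ agrees with $T_n$ below its $n$-th splitting level, the fusion $T_\om=\bigcap_n T_n$ is again a perfect tree, so $[T_\om]\neq\emptyset$; but $[T_\om]\subseteq\bigcap_n[T_n]$ is disjoint from every $P_i$, contradicting $\bigcup_i P_i=2^\om$. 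Hence the iteration must terminate at the good case, which is exactly the conclusion.

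The main obstacle is maintaining \emph{uniform} pointedness throughout. In the refinement step one must code the winning strategy $\sigma$ (resp.\ $\tau$) into the branches so that a single Turing index recovers the subtree from any branch; and in the fusion one must arrange the successive refinements to cohere so that the limit tree $T_\om$ is recovered from each of its branches by one fixed index, for instance by having each branch of $T_\om$ uniformly compute the entire sequence of strategies used in the construction. This bookkeeping with Turing indices, rather than the topological content, is where the real work lies.
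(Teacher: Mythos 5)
The paper does not actually prove this statement: it is quoted as a known result of Martin's (cited to \cite{MarSlaSte}), so your argument has to be judged on its own terms rather than against a proof in the text. Your overall plan --- a determinacy-based refinement lemma plus fusion --- is the standard route to this fact, and your refinement lemma, together with its proof, is correct: the game on codes $Z\mapsto T[Z]$ is determined under {\sf AD}, and restricting the non-strategic player's moves so that they code the winning strategy makes every branch of the resulting subtree compute the strategy, hence (by uniform pointedness of $T$) compute $T$, hence the play, hence the new tree, all via one fixed index.

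The gap is in how you combine the refinement lemma with fusion. The lemma produces a single homogeneous u.p.p.\ subtree with no positional control; in fact all branches of the game-produced subtree follow the strategy's fixed first move, so the subtree already fails to contain both successors of the first splitting node of $T_n$. Hence you cannot ``arrange, as usual, that $T_{n+1}$ agrees with $T_n$ below its $n$-th splitting level'' by a single application of the lemma. The repair is to apply the refinement lemma separately to the subtree of $T_n$ above each of the finitely many nodes $s$ of the $n$-th splitting level (each such subtree is u.p.p., with index uniform in $s$). If above some $s$ the refinement lands inside $P_n$, that single local refinement is itself the desired u.p.p.\ tree and the construction stops; otherwise every local refinement avoids $P_n$, and you graft them onto $T_n$ at the $n$-th splitting level to form $T_{n+1}$. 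Note that this amalgam is in general \emph{not} uniformly pointed --- a branch above $s$ has no reason to compute the strategies used above other nodes --- and your proposed bookkeeping, namely having each branch of $T_\om$ compute ``the entire sequence of strategies used in the construction,'' is impossible, since no single real computes infinitely many arbitrary strategies. But neither is needed: uniform pointedness is used only when the refinement lemma is invoked, and it is only ever invoked on subtrees above single nodes of the current splitting level, which are final segments of the previous stage's local refinements and therefore u.p.p.; the limit tree $T_\om$ need not be pointed at all, only nonempty, which fusion guarantees, so that in the everywhere-bad case its branches avoid every $P_i$, contradicting $\bigcup_i P_i=2^\om$. With the iteration reorganized in this local fashion, your proof goes through.
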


\begin{lemma}\label{lem:UOP-parallelizable}
Assume that $f$ and $g$ are UOP functions.
Then,
\[f\leq_\mathbf{T}^\cone g\iff f\leq_{sW}^cg \iff f\leq_{sW}^c\widehat{g}.\]
\end{lemma}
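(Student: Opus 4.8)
The plan is to prove the chain of equivalences
\[
f\leq_\mathbf{T}^\cone g\iff f\leq_{sW}^cg \iff f\leq_{sW}^c\widehat{g}
\]
by establishing a cycle of implications. The easy implication is $f\leq_{sW}^cg\Rightarrow f\leq_{sW}^c\widehat{g}$, since a single application of $g$ is trivially recovered from the parallelization by reading off one coordinate and padding the input. The substantive work lies in the remaining two arrows.

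First I would prove $f\leq_{sW}^c\widehat{g}\Rightarrow f\leq_\mathbf{T}^\cone g$. Suppose $f=\Phi\circ\widehat{g}\circ\Psi$ with $\Phi,\Psi$ continuous, so that $f(X)=\Phi(\langle g(\Psi(i,X)):i\in\om\rangle)$. Fix an oracle $C$ coding $\Phi$, $\Psi$, and the UOP witness $u$ for $g$. For $X\geq_TC$ each $\Psi(i,X)$ is computed from $X\oplus C\equiv_T X$, so by UOP each $g(\Psi(i,X))$ is uniformly Turing reducible to $g(X')$ for an appropriate point; more usefully, each coordinate is computed from $g$ applied to an oracle of the form $\Psi(i,X)\leq_T X$, and since $g$ is UOP (hence increasing or constant a.e.\ by Fact \ref{fact:Slaman-Steel}) I can bound $g(\Psi(i,X))\leq_T g(X)\oplus C$ uniformly in $i$ on the cone above $C$. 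Then the join $\langle g(\Psi(i,X)):i\in\om\rangle\leq_T g(X)\oplus C$, and applying the continuous $\Phi$ (computable from $C$) gives $f(X)\leq_T g(X)\oplus C$, which is exactly $f\leq_\mathbf{T}^\cone g$.

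The hard part will be $f\leq_\mathbf{T}^\cone g\Rightarrow f\leq_{sW}^c g$, which is where the uniformly pointed perfect tree machinery enters. Assume $f\leq_\mathbf{T}^\cone g$ via some $C$, so that for all $X\geq_TC$ we have $f(X)\leq_T g(X)\oplus C$ via a reduction index that, by invoking UOP-type uniformity, can be taken independent of $X$ on a further cone. The reduction index together with the UOP witnesses determines a partition of $2^\om$ into countably many pieces according to which pair of indices governs the behavior at $X$; by Fact \ref{fact:Martin} one block $P_e$ contains all paths through a u.p.p.\ tree $T$. Relativizing everything to (a base coding) $T$ and using that $T\leq_T T[X]$ uniformly, I would replace the cone-quantifier $(\forall X\geq_T C)$ by the genuinely continuous substitution $X\mapsto T[X]$: composing with $T[\cdot]$ turns the a.e.\ reduction into an everywhere reduction, and the fixed indices turn the Turing reductions into honest continuous maps $\Phi,\Psi$. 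The output of this construction is a factorization $f=\Phi\circ g\circ\Psi$ witnessing $f\leq_{sW}^c g$.

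I expect the main obstacle to be the passage from ``reducible on a cone with possibly $X$-dependent index'' to ``reducible via a single fixed index on a u.p.p.\ subtree.'' Concretely, the delicate points are (i) arguing that the Turing-reduction index realizing $f(X)\leq_T g(X)\oplus C$ can be made uniform, which requires combining the UOP witnesses for $f$ and $g$ to drive Fact \ref{fact:Martin} on the appropriate index partition, and (ii) checking that composition with the continuous embedding $T[\cdot]$ and the uniform recovery $T\leq_T T[X]$ genuinely converts the implicit oracle $C$ into the tree base so that no residual oracle survives in the strong Weihrauch factorization. Once the indices are pinned down and the u.p.p.\ tree is fixed, assembling $\Phi$ and $\Psi$ from the now-constant reduction data is routine.
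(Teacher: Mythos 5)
Your overall architecture---the cycle $f\leq_{sW}^cg\Rightarrow f\leq_{sW}^c\widehat{g}\Rightarrow f\leq_\mathbf{T}^\cone g\Rightarrow f\leq_{sW}^cg$---is the same as the paper's, and two of the three arrows are in order: the first is indeed trivial, and your argument for $f\leq_{sW}^c\widehat{g}\Rightarrow f\leq_\mathbf{T}^\cone g$ is essentially the paper's proof (push the inner reductions $\Psi(i,X)\leq_TX$ through the UOP witness $u$ of $g$ to get $g(\Psi(i,X))\leq_Tg(X)$ uniformly in $i$ on the cone above $C$, join, and apply the $C$-computable outer map). Your appeal to Fact \ref{fact:Slaman-Steel} in that step is unnecessary; UOP-ness of $g$ alone does the work.

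The gap is in the direction $f\leq_\mathbf{T}^\cone g\Rightarrow f\leq_{sW}^cg$. Applying Fact \ref{fact:Martin} to the partition by reduction indices is correct, and yields a u.p.p.\ tree $T$ and a single index $e$ such that $f(T[X])\leq_Tg(T[X])\oplus C$ via $\Phi_e$ for every $X$. But composing with $T[\cdot]$, as you describe, only produces the everywhere factorization $f\circ T=\Phi_e^C\circ g\circ T$, where $\Phi_e^C\colon Z\mapsto\Phi_e(Z\oplus C)$; that is, it witnesses $f\circ T\leq_{sW}^cg$, not $f\leq_{sW}^cg$, because the left-hand side computes $f(T[X])$ rather than $f(X)$. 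The missing idea---and the only place in this direction where the hypothesis that $f$ is UOP is used---is the passage from $f(T[X])$ back to $f(X)$: since $T$ is uniformly pointed, $X\leq_TT[X]$ via a fixed index $d$, so UOP-ness of $f$ (with witness $u_f$) gives $f(X)\leq_Tf(T[X])$ via the fixed index $u_f(d)$, and $\Phi_{u_f(d)}$ is precisely the outer continuous map you lack; the paper's factorization is $f=\Phi_{u_f(d)}\circ\Phi_e^C\circ g\circ T$. Note also that both of the points you flagged as delicate are misdirected: the uniformization of the reduction index is accomplished entirely by Fact \ref{fact:Martin} (the UOP witnesses play no role in forming the partition), and the oracle $C$ is a non-issue since $\leq_{sW}^c$ only requires continuous maps, which absorb any fixed oracle---the real work hides in the step you called ``routine.''
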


\begin{proof}
Assume that $f\leq_\mathbf{T}^\cone g$ via $C$.
By Fact \ref{fact:Martin}, there are a u.p.p.\ tree $T$ and an index $e$ such that for any $X$, $f(T[X])\leq_Tg(T[X])\oplus C$ via $\Phi_e$.
Note that $\Phi_e^C\colon Z\mapsto\Phi_e(Z\oplus C)$ is continuous.
Assume that $f$ is UOP via $u$.
For an index $d$ witnessing $X\leq_TT[X]$, we have $f(X)\leq_Tf(T[X])$ via $\Phi_{u(d)}$.
Then, we have
\[f(X)=\Phi_{u(d)}(f(T[X]))=\Phi_{u(d)}(\Phi_e(g(T[X])\oplus C))=\Phi_{u(d)}(\Phi_e^C(g(T[X]))).\]
This concludes that $f=\Phi_{u(d)}\circ\Phi_e^C\circ g\circ T$, and thus, $f\leq_{sW}^cg$ as desired.

Conversely, assume that $f\leq_{sW}^c\widehat{g}$.
Then, there are continuous functions $\Phi,\Psi$ such that $f(X)=\Phi(\langle g(\Psi(i,X))\rangle_i)$ for all $X$.
Let $C$ be an oracle such that $\Phi$ and $\Psi$ are $C$-computable.
If $X\geq_TC$, then $\Psi(i,X)$ is $X$-computable uniformly in $i$, that is, there is a computable function $p$ such that $\Psi(i,X)\leq_TX$ via $\Phi_{p(i)}$.
Let $u$ witness that $g$ is UOP.
Then, if $X\geq_TC$, then we have $g(\Psi(i,X))\leq_Tg(X)$ via $\Phi_{u\circ p(i)}$.
Therefore, $\bigoplus_ig(\Psi(i,X))\leq_Tg(X)\oplus u$.
Then, for any $X\geq_TC$,
\[f(X)=\Phi\left(\langle g(\Psi(i,X))\rangle_i\right)\leq_Tg(X)\oplus u\oplus C.\]
Consequently, we get that $f\leq_\mathbf{T}^\cone g$.
\end{proof}

\subsection{Surjectivity}

To prove Theorem \ref{thm:pcsW-equal-Martin}, it remains to show that every function $f\colon \om^\om\to\mathbb{R}$ is $\equiv\pcsW$-equivalent to a UOP function.
Clearly, every constant function is UOP, and any two constant functions are $\equiv_\mathbf{T}^\cone$-equivalent, and $\equiv\pcsW$-equivalent.
We hereafter assume that $f\colon \om^\om\to\mathbb{R}$ is not constant.

\subsubsection{Continuous functions}

By Theorem \ref{thm:main-theorem1}, the Wadge degrees of $\three$-valued functions of the form $\sep{f}$ can be identified with the Wadge degrees of subsets of $\om^\om$.
For $\mathbf{m}$-degrees, recall that each selfdual degree splits into two degrees, but this spliting happens only for $\mathbf{m}$-degrees.
Actually, one can see that parallel continuous strong Weihrauch reducibility for non-constant functions is coarser than Wadge reducibility as follows.

\begin{lemma}\label{lem:wadge-implies-pcsW}
Assume that $g$ is not constant.
If $\sep{f}\leq_w\sep{g}$, then $f\leq_{sW}^c\widehat{g}$.
\end{lemma}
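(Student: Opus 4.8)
The plan is to convert the single continuous Wadge reduction witnessing $\sep{f}\leq_w\sep{g}$ into countably many queries to $g$, exploiting the nonconstancy of $g$ to transmit to the outer continuous map the thresholds that the reduction produces (which in general depend on the input). First I would fix a continuous $\theta\colon\om^\om\to\om^\om$ with $\sep{f}(Z)\leq_\three\sep{g}(\theta(Z))$ for all $Z$, and, under the identification $[\mathbb{Q}]^2\times\om^\om\cong\om^\om$, decompose $\theta(\langle p,q\rangle\fr X)=\langle r,s\rangle\fr W$ for each rational pair $p<q$. This yields continuous maps $X\mapsto r_{p,q}(X)$ and $X\mapsto s_{p,q}(X)$ (rational-valued, hence locally constant) together with $X\mapsto W_{p,q}(X)\in\om^\om$. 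Unfolding $\sep{f}(\langle p,q\rangle\fr X)\leq_\three\sep{g}(\langle r,s\rangle\fr W)$ into its two nontrivial one-sided implications gives $f(X)\leq p\Rightarrow g(W)\leq r$ and $f(X)\geq q\Rightarrow g(W)\geq s$, whose contrapositives are $g(W)>r\Rightarrow f(X)>p$ and $g(W)<s\Rightarrow f(X)<q$.

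From these I would extract two exact formulas for the target value, using that $r<s$ always holds for a genuine code in $[\mathbb{Q}]^2$:
\[
  f(X)=\sup\{\,p:\exists q>p,\ g(W_{p,q}(X))>r_{p,q}(X)\,\}
       =\inf\{\,q:\exists p<q,\ g(W_{p,q}(X))<s_{p,q}(X)\,\}.
\]
The inequalities $\leq f(X)$ come directly from the contrapositives, and the reverse inequalities come from choosing, for a rational $p<f(X)$, a rational $q$ with $p<q<f(X)$, so that $f(X)\geq q$ forces $g(W_{p,q}(X))\geq s_{p,q}(X)>r_{p,q}(X)$ (symmetrically for the inf).

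Since $g$ is nonconstant, I would fix $A,B$ with $g(A)<g(B)$ and a rational $m$ strictly between them; a calibration query to $A$ or $B$, decoded by comparing its answer with $m$, transmits one bit, so countably many such queries transmit an arbitrary rational. Interleaving by a single $\om$-indexing, the map $\Psi$ then issues, for every rational pair $p<q$, the probe query $W_{p,q}(X)$ together with calibration queries encoding the rationals $r_{p,q}(X)$ and $s_{p,q}(X)$; all of these depend continuously on $X$. The map $\Phi$ reconstructs those rationals from the calibration answers, reads the probe answers $g(W_{p,q}(X))$, and outputs the value dictated by the sup formula. Then $f=\Phi\circ\widehat{g}\circ\Psi$, which is the required witness of $f\leq_{sW}^c\widehat{g}$.

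The main obstacle is the continuity of $\Phi$: the comparison $g(W)>r$ is discontinuous precisely at $g(W)=r$, so neither formula is continuous on its own. I would resolve this by a semicontinuity sandwich on the set $D\subseteq\mathbb{R}^\om$ of answer sequences that actually arise. On $D$ the reconstructed rationals $r_{p,q},s_{p,q}$ are locally constant, so the sup formula (a supremum of strict ``$>$'' conditions) is lower semicontinuous and the inf formula (an infimum of strict ``$<$'' conditions) is upper semicontinuous as functions on $D$; since both equal $f$ throughout $D$, for any convergent $\alpha^{(n)}\to\alpha$ in $D$ the liminf and limsup of $\Phi(\alpha^{(n)})$ are squeezed to $\Phi(\alpha)$, forcing $\Phi\upto D$ to be continuous. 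I expect this sandwich to be the technical heart of the argument; the nonconstancy of $g$ is essential, as it is exactly what allows $\Phi$ to learn the otherwise invisible, $X$-dependent thresholds.
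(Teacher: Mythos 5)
Your proof is correct and takes essentially the same route as the paper: the paper likewise unfolds the Wadge reduction $\sep{f}\leq_w\sep{g}$ into continuous threshold maps $r,s$ and a query map $\psi$, reconstructs $f(X)$ from the oracle answers, and exploits nonconstancy of $g$ to smuggle the $X$-dependent side information through $\widehat{g}$. The only difference is organizational: the paper first establishes the non-strong reduction $f\leq_{W}^c\widehat{g}$ (letting the outer map keep a copy of $X$, from which it can compute the thresholds itself) and then invokes Lemma \ref{lem:remove-strong} — whose proof is exactly your calibration-query trick with two values of $g$ separated by disjoint rational intervals — to discard the copy of $X$, whereas you inline that trick, transmitting only the locally constant thresholds $r_{p,q}(X),s_{p,q}(X)$; your explicit sup/inf reconstruction formulas and the semicontinuity sandwich are precisely the details hidden behind the paper's phrase ``one can show that there is a continuous function $\Phi$.''
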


To show Lemma \ref{lem:wadge-implies-pcsW}, we need the following sublemma.
We write $f\leq_W^cg$ if $f\leq_{sW}^c({\rm id},g)$, where given functions $f,h$, define $(f,h)\colon x\mapsto(f(x),h(x))$.

\begin{lemma}\label{lem:remove-strong}
Let $f,g\colon \om^\om\to\mathbb{R}$ be functions.
Assume that $g$ is not constant.
Then, $f\leq_{sW}^c\widehat{g}$ if and only if $f\leq_{W}^c\widehat{g}$.
\end{lemma}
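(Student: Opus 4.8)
The forward implication I would dispatch immediately: a witness $f=\Phi\circ\widehat{g}\circ\Psi$ for $f\leq_{sW}^c\widehat{g}$ also witnesses $f\leq_W^c\widehat{g}$, since replacing $\Phi$ by $(Y,Z)\mapsto\Phi(Z)$ turns it into a strong reduction through $({\rm id},\widehat{g})$. No hypothesis on $g$ is needed here.

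The substance is the reverse implication. Suppose $f\leq_W^c\widehat{g}$ is realized by a continuous $\Psi\colon\om^\om\to(\om^\om)^\om$ and a partial continuous $\Phi$ with
\[
f(X)=\Phi\bigl(\Psi(X),\ \widehat{g}(\Psi(X))\bigr).
\]
The plan is to absorb the auxiliary input $\Psi(X)$ into additional queries to $g$, exploiting that $g$ is non-constant to read it back off from the answers. Fix $a,b\in\om^\om$ with $g(a)\neq g(b)$, and let $D\colon\mathbb{R}\to\mathbb{R}$ be the affine bijection sending $g(a)\mapsto 0$ and $g(b)\mapsto 1$. Fix also a continuous injection $c\colon(\om^\om)^\om\to 2^\om$ with partial continuous inverse, for instance by flattening $(\om^\om)^\om$ to $\om^\om$ via a pairing homeomorphism and then applying the map $Z\mapsto 0^{Z(0)}10^{Z(1)}1\cdots$ used in the proof of Lemma \ref{lem:nsd-complete}; write $c(\Psi(X))=\langle c_k(X)\rangle_{k\in\om}$, so each $X\mapsto c_k(X)$ is continuous.

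Next I would define the new query map $\Psi'\colon\om^\om\to(\om^\om)^\om$ by interleaving two families along even and odd coordinates: set $\Psi'(X)_{2n}=\Psi(X)_n$ (the original queries), and set $\Psi'(X)_{2k+1}=a$ if $c_k(X)=0$ and $\Psi'(X)_{2k+1}=b$ if $c_k(X)=1$ (the encoding queries). Since $a,b$ are fixed and each $c_k$ is continuous, $\Psi'$ is continuous. Applying $\widehat{g}$, the even coordinates of $\widehat{g}(\Psi'(X))$ recover $\widehat{g}(\Psi(X))$, while each odd coordinate is $g(a)$ or $g(b)$, so $D$ applied to coordinate $2k+1$ returns exactly $c_k(X)$. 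Thus a partial continuous map reads the odd coordinates, applies $D$ to recover $c(\Psi(X))\in 2^\om$, applies $c^{-1}$ to recover $\Psi(X)$, and—together with the even coordinates—reconstructs the pair $(\Psi(X),\widehat{g}(\Psi(X)))$, to which the original $\Phi$ is applied. This composite $\Phi'$ is partial continuous and satisfies $f(X)=\Phi'(\widehat{g}(\Psi'(X)))$, giving $f\leq_{sW}^c\widehat{g}$.

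The only delicate point, and the step I expect to need the most care, is that a single application of $g$ returns a real number from which we must recover a discrete bit. This is exactly where non-constantness of $g$ is used: $g(a)$ and $g(b)$ are two fixed distinct reals, so the affine decoder $D$ returns precisely $0$ or $1$ on the actual answers, and the partiality of $\Phi'$ permitted in the definition of $\leq_{sW}^c$ absorbs the behaviour of $D$ and $c^{-1}$ away from the relevant points. Everything else is a routine continuity verification of the interleaving and decoding.
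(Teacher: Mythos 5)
Your proof is correct and takes essentially the same route as the paper: the paper proves $({\rm id},\widehat{g})\leq_{sW}^c\widehat{g}$ by picking $Y_0,Y_1$ with $g(Y_0)\neq g(Y_1)$, separating the two values by disjoint rational open intervals, and encoding the input bit-by-bit as queries at $Y_0,Y_1$, which is exactly your interleaving of encoding queries at $a,b$ with the original queries (your affine decoder $D$ plays the role of the paper's interval test $\tau$). The only cosmetic difference is that you encode $\Psi(X)$ directly and fold everything into one reduction, while the paper encodes the identity input and then invokes $({\rm id},\widehat{g})\leq_{sW}^c(\widehat{g},\widehat{g})\equiv_{sW}\widehat{g}$.
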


\begin{proof}
It suffices to show that $({\rm id},\widehat{g})\leq_{sW}^c\widehat{g}$.
Since $g$ is not constant, there are $Y_0,Y_1\in\om^\om$ such that $g(Y_0)\not=g(Y_1)$.
Let $U_0,U_1$ be disjoint rational open intervals such that $g(Y_i)\subseteq U_i$ for each $i<2$.
Given $X\in 2^\om$, define $\rho(i,X)=Y_{X(i)}$.
We also define $\tau(\bigoplus_i Z_i)(n)=i$ if $Z_n\in U_i$.
Then,
\[X=\tau\left(\bigoplus_ig(Y_{X(i)})\right)=\tau\left(\bigoplus_ig\circ\rho(i,X)\right).\]
This shows that ${\rm id}\leq_{sW}^c\widehat{g}$.
Consequently, $({\rm id},\widehat{g})\leq_{sW}^c(\widehat{g},\widehat{g})\equiv_{sW}\widehat{g}$.
\end{proof}

Note that the outer reduction $\tau$ in the proof of Lemma \ref{lem:remove-strong} is clearly computable.
Later we will use this observation to show Theorem \ref{thm:pcsW-equal-Martin2}.

\begin{proof}[Proof of Lemma \ref{lem:wadge-implies-pcsW}]
Assume that $\sep{f}\leq_w\sep{g}$.
Then, there are continuous functions $r,s\colon \mathbb{Q}^2\times\om^\om\to\mathbb{Q}$ and $\psi\colon \mathbb{Q}^2\times\om^\om\to\om^\om$ such that for any $p<q$ and $X\in\om^\om$,
\begin{align*}
f(X)\leq p\;&\Longrightarrow\;g(\psi(p,q,X))\leq r(p,q,X),\\
f(X)\geq q\;&\Longrightarrow\;g(\psi(p,q,X))\geq s(p,q,X).
\end{align*}
Then, one can show that there is a continuous function $\Phi$ such that $f(X)=\Phi(X,\bigoplus_{p,q}g(\psi(p,q,X)))$ for all $X$.
Consequently, $f\leq_{W}^c\widehat{g}$, and thus  $f\leq_{sW}^c\widehat{g}$ by Lemma \ref{lem:remove-strong}.
\end{proof}

By Lemma \ref{lem:wadge-implies-pcsW}, the $\equiv\pcsW$-degrees of continuous functions consist only of two degrees, that is, if $f$ and $g$ are continuous, but not constant, then $f\equiv\pcsW g$.
In particular, $f\equiv\pcsW {\rm id}$, where note that the identity map ${\rm id}$ is clearly a UOP function.
Hence, it remains to consider the case that $f$ is discontinuous.

\subsubsection{Nonselfdual functions}

Assume that $\sep{f}$ is nonselfdual.
As in Becker \cite{Becker88}, we first assign a UOP function to each nonselfdual Wadge degree.
Following Becker \cite[Definiton 2.2]{Becker88}, we say that a pointclass $\Gamma$ is {\em reasonable} if $\Gamma$ is $\om$-parametrized, contains all computable sets, and has the substitution property.
Given a $\Gamma$-indexing $U$, we define $J_\Gamma^U\colon 2^\om\to 2^\om$ as follows:
\[J^U_\Gamma(X)=\{\langle m,n\rangle:U(m,n,X)\}.\]

\begin{fact}[Becker {\cite[Lemmas 2.5 and 2.6]{Becker88}}]
For any reasonable pointclass $\Gamma$ and its indexing $U$, $J_\Gamma^U$ is a UOP function which is increasing a.e.
Moreover, the $\equiv_T^\cone$-degree of $J_\Gamma^U$ is independent of the choice of $U$.
\end{fact}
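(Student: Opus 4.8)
The plan is to verify the three assertions packaged into this fact — that $J^U_\Gamma$ is UOP, that it is increasing a.e., and that its $\equiv_T^\cone$-degree does not depend on $U$ — by exploiting the two structural features hidden in the word ``reasonable'': the substitution property (for order preservation) and $\om$-parametrization together with closure under computable sets (for the lower bound and for comparing indexings). Throughout I would think of $J^U_\Gamma(X)=\{\langle m,n\rangle:U(m,n,X)\}$ as the ``$\Gamma$-jump'' of $X$, i.e.\ the set of indices $j$ for which $X$ lies in the $j$-th $\Gamma$ set named by $U$.

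First I would establish that $J^U_\Gamma$ is UOP. Suppose $X\leq_TY$ via $e$, so $X=\Phi_e(Y)$. For each pair $(m,n)$ the section $\{Z:U(m,n,Z)\}$ is a $\Gamma$ set, and the substitution property applied to the computable functional $\Phi_e$ yields that $\{Y:U(m,n,\Phi_e(Y))\}$ is again a $\Gamma$ set, with an index $\sigma(m,n,e)$ computable from $(m,n,e)$ \emph{uniformly}, independently of the reals involved. Hence the bit $\langle m,n\rangle$ of $J^U_\Gamma(X)$ agrees with the bit $\sigma(m,n,e)$ of $J^U_\Gamma(Y)$, so $J^U_\Gamma(X)\leq_TJ^U_\Gamma(Y)$ via a single index $u(e)$ depending only on $e$. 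This is exactly the UOP condition. The main point to get right is precisely this uniformity: substitution must deliver the translation $\sigma$ as a total function of the indices alone, not of $X$ or $Y$.

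For the lower bound I would note that each clopen set $D_k=\{X:X(k)=1\}$ is computable, hence lies in $\Gamma$; by $\om$-parametrization there is a real $C_0$ such that an index $h(k)$ for $D_k$ can be found $C_0$-computably, uniformly in $k$. Reading off the bits $h(k)$ gives $X\leq_TJ^U_\Gamma(X)\oplus C_0$ for every $X$. Now $J^U_\Gamma$ is UOP, hence UI, so Fact \ref{fact:Slaman-Steel} applies and $J^U_\Gamma$ is either constant a.e.\ or increasing a.e. It cannot be constant a.e., for otherwise every $X$ above a suitable cone base $C$ would satisfy $X\leq_TJ^U_\Gamma(X)\oplus C_0\equiv_T C\oplus C_0$, a fixed real — impossible. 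Hence $J^U_\Gamma$ is increasing a.e. Invoking the Slaman--Steel dichotomy in this way is cleaner than trying to decode $X$ from $J^U_\Gamma(X)$ with no oracle at all, which is where the subtlety about the effectiveness of the parametrization would otherwise bite.

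Finally, for independence of the indexing, let $U,V$ be two $\Gamma$-indexings. By universality each $V$-section is a $U$-section and conversely, and the passage between the two parametrizations is coded by a single real $C$; relative to $C$ the index translation is computable, so bit $k$ of $J^V_\Gamma(X)$ equals bit $t(k)$ of $J^U_\Gamma(X)$ for a $C$-computable $t$. Thus $J^V_\Gamma(X)\leq_TJ^U_\Gamma(X)\oplus C$ for all $X$; since $J^U_\Gamma$ is increasing a.e.\ (equivalently, by Observation \ref{obs:bold-T-equal-light-T}) the extra $\oplus C$ is absorbed on the cone, giving $J^V_\Gamma\leq_T^\cone J^U_\Gamma$, and the symmetric argument yields $J^U_\Gamma\equiv_T^\cone J^V_\Gamma$. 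I expect the genuine obstacle across all three parts to be the bookkeeping of uniformity and effectiveness of the $\Gamma$-indexing: each step needs that indices for derived $\Gamma$ sets are produced by a total function computable from a fixed real (or outright computable), and isolating exactly which fixed parameters are required is where the care lies.
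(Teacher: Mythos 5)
Your proof is correct; note that the paper itself offers no argument for this statement (it is quoted directly from Becker), so the relevant comparison is with Becker's original proof of his Lemmas 2.5 and 2.6. Your UOP argument is the standard one: apply the uniform substitution property to $\Phi_e$ to get an index translation $\sigma(m,n,e)$ depending only on indices, so that bit $\langle m,n\rangle$ of $J^U_\Gamma(X)$ is read off at position $\sigma(m,n,e)$ of $J^U_\Gamma(Y)$; the one detail to make explicit is that the substitution property must be taken in its form for \emph{partial} computable functionals (asserting $Y\in B\iff\Phi_e(Y)\in A$ only for $Y$ with $\Phi_e(Y)$ total), since $\Phi_e$ is total only on the reals that actually compute $X$. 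Where you genuinely diverge is the ``increasing a.e.''\ step. The classical route, which is what Becker's lemma does, obtains $X\leq_T J^U_\Gamma(X)$ outright with no oracle: the uniformity in the substitution property lets one \emph{compute} (not merely choose) an index for each bit-set $\{X:X(k)=1\}$, e.g.\ by applying uniform substitution to the recursive shift maps and a single fixed index for $\{X:X(0)=1\}$. You instead make a non-effective choice of indices, code it into a real $C_0$, obtain $X\leq_T J^U_\Gamma(X)\oplus C_0$, and then invoke the Slaman--Steel dichotomy (Fact \ref{fact:Slaman-Steel}) to rule out constancy a.e. This is legitimate in the paper's setting, where that dichotomy is available as a stated fact and appears before the present one, and it buys you independence from any effectiveness assumptions on the parametrization; the cost is that you spend a determinacy-strength tool on what is classically an elementary uniformity computation, and you obtain only the a.e.\ conclusion rather than ``increasing everywhere'' (which is all the statement asks for, so no harm). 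Your third part --- translating $V$-indices to $U$-indices by a function coded into a real $C$, then absorbing $\oplus C$ on a cone using increasing a.e.\ (or equivalently Observation \ref{obs:bold-T-equal-light-T}) --- is exactly the standard relativized argument and is correct.
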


As in Section \ref{section:proper-three-Wadge}, given a pointclass $\Gamma$, we define $\Gamma^\pthree=\{\A:\om^\om\to\three\mid(\exists S\in\Gamma)\;\A\leq_wS\}$.

\begin{lemma}
For any reasonable pointclass $\Gamma$, if $\sep{f}$ is $\Gamma^\pthree$-complete, then $f\equiv\pcsW J_\Gamma$.
\end{lemma}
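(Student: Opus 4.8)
The plan is to reduce the whole statement to a single completeness fact about the level sets of the Becker jump. Concretely, I will show that $\sep{J_\Gamma}$ is again $\Gamma^\pthree$-complete; granting this, the hypothesis that $\sep{f}$ is $\Gamma^\pthree$-complete gives $\sep{f}\equiv_w\sep{J_\Gamma}$, and the conclusion $f\equiv\pcsW J_\Gamma$ then follows from Lemma \ref{lem:wadge-implies-pcsW} applied in both directions. Indeed, since $J_\Gamma$ is increasing a.e.\ it is non-constant, so $\sep{f}\leq_w\sep{J_\Gamma}$ yields $f\leq\pcsW J_\Gamma$; and since $f$ is non-constant by the standing assumption of this section, $\sep{J_\Gamma}\leq_w\sep{f}$ yields $J_\Gamma\leq\pcsW f$. (Here $J_\Gamma\colon 2^\om\to 2^\om\subseteq\mathbb{R}$ is regarded as a real-valued function via the fixed Cantor embedding.) Thus the entire lemma rests on the completeness of $\sep{J_\Gamma}$, which is where the work lies.

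For the upper bound $\sep{J_\Gamma}\in\Gamma^\pthree$, fix a $\Gamma$-indexing $U$ and write $U_{m,n}=\{X:U(m,n,X)\}$, so that the bit of $J_\Gamma(X)$ at position $\langle m,n\rangle$ is the characteristic function of $U_{m,n}\in\Gamma$. Reading the real order on the Cantor set as the lexicographic order on $2^\om$, each level set becomes a countable Boolean combination of bit-sets; for instance, writing $t\in 2^\om$ for the Cantor code of the threshold $q$,
\[
\{X:J_\Gamma(X)\geq q\}=\bigcup_{k\,:\,t(k)=0}\Big(\{X:J_\Gamma(X)(k)=1\}\cap\bigcap_{j<k}\{X:J_\Gamma(X)(j)=t(j)\}\Big),
\]
and symmetrically for $\{X:J_\Gamma(X)\leq p\}$. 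Using the $\om$-parametrization and the closure properties of a reasonable pointclass, these combinations keep the disjoint pair $(\{\sep{J_\Gamma}=0\},\{\sep{J_\Gamma}=1\})$ on the two sides $\check\Gamma$ and $\Gamma$, which is exactly the shape needed to Wadge-pair-reduce it to $(\neg U,U)$ (Section \ref{sec:1-4}); hence $\sep{J_\Gamma}\leq_w U$ and $\sep{J_\Gamma}\in\Gamma^\pthree$. I would carry out this bookkeeping separately for the $\Sigma$- and $\Pi$-flavoured $\Gamma$, but in each case the computation is routine closure under the operations a reasonable pointclass supports.

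For the lower bound, which I expect to be the main obstacle, I must show that every $\Gamma$ subset of $\om^\om$ Wadge-reduces to $\sep{J_\Gamma}$; since $\sep{J_\Gamma}\in\Gamma^\pthree$ this gives completeness. Fix a $\Gamma$-complete $S$. By $\om$-parametrization there is a position $k=\langle m,n\rangle$ with $S=U_{m,n}$, i.e.\ $X\in S\iff J_\Gamma(X)(k)=1$. The difficulty is that a single threshold pair $p<q$ reads the $k$-th lexicographic bit of $J_\Gamma(X)$ cleanly only when the lower bits $0,\dots,k-1$ are pinned to fixed values, whereas those bits are themselves $\Gamma$-conditions on $X$ that I do not control directly. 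The crux is therefore to invoke the substitution property of $\Gamma$ to pass continuously from $X$ to an input $X'=\vartheta(X)$ for which $J_\Gamma(X')$ begins with a fixed block (say $0^{k}$) while $J_\Gamma(X')(k)=\chi_S(X)$; then $p<q$ chosen inside the Cantor gap separating the two cylinders extending $0^{k}$ reads this bit with no intermediate ($\bot$) value, so that $\chi_S(X)=\sep{J_\Gamma}(\langle p,q\rangle\fr X')$ for every $X$. This witnesses $S\leq_w\sep{J_\Gamma}$, and by completeness of $S$ every member of $\Gamma^\pthree$ reduces to $\sep{J_\Gamma}$.

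Assembling the two bounds gives $\sep{J_\Gamma}\equiv_w\sep{f}$, and the reduction argument of the first paragraph then delivers $f\equiv\pcsW J_\Gamma$. The only genuinely delicate point is the bit-pinning in the hardness direction; everything else is closure bookkeeping for reasonable pointclasses together with the already-established Lemma \ref{lem:wadge-implies-pcsW}.
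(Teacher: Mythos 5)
Your proposal takes a genuinely different route from the paper, and its linchpin --- the claim that $\sep{J_\Gamma}$ is $\Gamma^\pthree$-complete --- is precisely where it breaks. That claim is not provable from reasonableness, and it is in fact false for reasonable pointclasses within the scope of this lemma. Becker's notion of reasonableness only demands $\om$-parametrization, the computable sets, and the substitution property; it gives no closure under finite (let alone countable) unions, and the lemma must be applied to reasonable $\Gamma$ whose boldface part is an arbitrary nonselfdual Wadge class, e.g.\ the class $D_2(\Si{1})$ of locally closed sets, which is not closed under binary unions. Now take rationals $p<q$ inside the Cantor gap $(1/9,2/9)$. Since $J_\Gamma$ takes values in the Cantor set, the sets $\{J_\Gamma\leq p\}$ and $\{J_\Gamma\geq q\}$ partition $2^\om$, so any $\three$-Wadge reduction of $\sep{J_\Gamma}$ to a set $S\in\Gamma$ forces $\{J_\Gamma\geq q\}=\theta_{p,q}^{-1}[S]\in\mathbf{\Gamma}$: there is no freedom in choosing the separator. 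But by your own lexicographic computation (with the absorption identity $A\cup(A^c\cap B)=A\cup B$), $\{J_\Gamma\geq q\}=U_{a_0}\cup U_{a_1}$, the union of the two sections of the indexing coded at positions $0$ and $1$. Nothing in the definition of a reasonable pointclass or of a $\Gamma$-indexing prevents these two sections from being, say, $\{X:\forall n\,X(2n+1)=0\}$ and $\{X:\exists n\,X(2n)=1\}$ (both lightface $D_2(\Si{1})$ sets), whose union is not locally closed, hence not in $\mathbf{\Gamma}=D_2(\Si{1})$; for such an indexing $\sep{J_\Gamma}\notin\Gamma^\pthree$, even though the lemma's conclusion remains true. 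So the ``closure bookkeeping'' you defer to cannot be carried out. Your hardness step has a parallel problem: the substitution property yields preimages of $\Gamma$ sets, but it does not let you ``pin'' the values of the other sections $U_{a_j}$ at $\vartheta(X)$, which are fixed $\Gamma$-conditions outside your control.

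The deeper point, which the paper's own proof makes visible, is that the lemma does not factor through the Wadge degree of $\sep{J_\Gamma}$ at all, and cannot. The paper uses the indexing $U$ in its double role: as a $\Gamma$-complete set and as the bit-predicate of $J_\Gamma$. From $\sep{f}\leq_w U$ it reads, for each pair $p<q$, the single bit $J^U_\Gamma(\Psi_{pq}(X))(\tau_{pq}(X))$ supplied by the two coordinates of the reduction, and these bits continuously determine $f(X)$; conversely $U\leq_w\sep{f}$ (which holds because $\sep{f}$ is $\Gamma^\pthree$-complete and $U\in\Gamma$) lets each bit $U(m,n,X)$ of $J_\Gamma(X)$ be decoded from finitely much information about $f$-values. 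In other words, a parallelized continuous strong Weihrauch reduction accesses $J_\Gamma(Y)$ as an element of $2^\om$, bit by bit, whereas your route forces all access to $J_\Gamma$ through order comparisons with rational thresholds under the Cantor embedding, which is strictly less information. Since Lemma \ref{lem:wadge-implies-pcsW} is only a one-way bridge (Wadge reducibility of level-set functions implies $\leq\pcsW$, not conversely), the equivalence $f\equiv\pcsW J_\Gamma$ does not require $\sep{f}\equiv_w\sep{J_\Gamma}$ --- and the counterexample above shows that this stronger statement genuinely fails, so the proposal should be abandoned in favor of a direct construction of the two reductions.
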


\begin{proof}
Let $U$ be a $\Gamma$-indexing, which is, in particular, $\Gamma$-complete.
Since $\sep{f}\in\Gamma^\pthree$, there is a continuous function $\theta$ such that $\sep{f}(\langle p,q\rangle\fr X)\leq_\three U\circ\theta(p,q,X)$.
Then $\theta(p,q,X)$ is of the form $(\tau_{pq}(X),\Psi_{pq}(X))$, where $\tau_{pq}(X)\in\om^2$ and $\Psi_{pq}(X)\in\om^\om$.
Thus, $\sep{f}(\langle p,q\rangle\fr X)\leq_\three J^U_\Gamma(\Psi_{pq}(X))(\tau_{pq}(X))$.
Then, one can construct a continuous function $\Phi$ such that $f(X)=\Phi(\bigoplus_{p,q}J^U_\Gamma\circ\Psi_{pq}(X))$.
Consequently, $f\leq_{sW}^c\widehat{J_\Gamma}$.

Conversely, since $U\in\Gamma$ and $\sep{f}$ is $\Gamma^\pthree$-complete, there is a continuous function $\eta$ such that $U=\sep{f}\circ\eta$.
By a similar argument as above, one can show that $J_\Gamma\leq_{sW}^c\widehat{f}$.
\end{proof}

Becker \cite[Lemma 3.4]{Becker88} showed that for every nonzero ordinal $\alpha<\Theta$, there are reasonable pointclasses $\Sigma$ and $\Pi$ such that $\mathbf{\Sigma}=\Sigma_\alpha$ and $\mathbf{\Pi}=\Pi_\alpha$.
Consequently, if $\sep{f}$ is nonselfdual, then there is a UOP jump operator $g$ such that $f\equiv_{sW}^{\widehat{c}}g$.

\subsubsection{Selfdual functions}

It remains to consider the case that $\sep{f}$ is selfdual, and $f$ is discontinuous.
By discontinuity of $f$, we have $\sep{f}\not\in\Delta_1^\pthree$.
Generally, the following lemma states that we do not need to deal with a selfdual Wadge degree of successor rank.

\begin{lemma}
Assume $\alpha>0$.
If $\sep{f}\in\Delta^\pthree_{\alpha+1}$ and if $\sep{g}$ is $\Sigma^\pthree_\alpha$-complete, then $f\leq_{sW}^c\widehat{g}$.
\end{lemma}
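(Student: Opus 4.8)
The plan is to exploit the fact that, although a $\Delta_{\alpha+1}$ set sits strictly above the $\Sigma_\alpha$-complete function $\sep{g}$ in the Wadge ordering, parallelizing $g$ recovers exactly this one extra level. First I would record the structural input. Since $\alpha>0$ and $\alpha+1$ is a successor, a $\Delta_{\alpha+1}$-complete set exists (Fact \ref{fact:complete-cofinality}) and is self-dual, hence $\si$-join-reducible by Fact \ref{fact:sjr-equal-sd}. By Fact \ref{fact:sji-deg-pres} it is Wadge equivalent to a join $\bigoplus_nC_n$ in which each $C_n$ is $\si$-join-irreducible and strictly Wadge below; being $\si$-ji it is non-self-dual, hence $\Sigma_{\beta_n}$- or $\Pi_{\beta_n}$-complete for some $\beta_n\leq\alpha$, and therefore $C_n\in\Sigma_\alpha\cup\Pi_\alpha$. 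As $\sep{f}\in\Delta^\pthree_{\alpha+1}$, fixing such a decomposition yields a continuous $\theta$ witnessing $\sep{f}\leq_w\bigoplus_nC_n$.

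Next I would reduce each component to a single query to $g$. Let $S$ be a $\Sigma_\alpha$-complete set; since $\sep{g}$ is $\Sigma^\pthree_\alpha$-complete we have $S\equiv_w\sep{g}$, and reading the real value $g(y)$ against a rational threshold computes $\sep{g}(\langle r,s\rangle\fr y)$. For a component $C_n\in\Sigma_\alpha$ we thus obtain a continuous map $Z\mapsto y_n(Z)$ and rationals $r_n<s_n$ with $C_n(Z)=1\iff g(y_n(Z))\geq s_n$ and $C_n(Z)=0\iff g(y_n(Z))\leq r_n$, the intermediate value never occurring because $S$ is total $\mathbf{2}$-valued. For $C_n\in\Pi_\alpha$ the same works after reading the dual bit, equivalently after replacing $g$ by $-g$; since $-g\leq_{sW}^cg$ this costs nothing. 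Hence each $C_n(Z)$ is computed by one continuous query to $g$, uniformly in $n$.

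Finally I would assemble the reduction exactly as in the proof of Lemma \ref{lem:wadge-implies-pcsW}. Writing $\theta(\langle p,q\rangle\fr X)=m(p,q,X)\fr Z(p,q,X)$, the Wadge reduction gives $\sep{f}(\langle p,q\rangle\fr X)\leq_\three C_m(Z)=:b_{p,q}(X)$, a single bit obtained from one query to $g$. Because $0$ and $1$ are $\leq_\three$-maximal, this forces $f(X)\leq p\Rightarrow b_{p,q}(X)=0$ and $f(X)\geq q\Rightarrow b_{p,q}(X)=1$, so that $f(X)=\sup\{p\in\mathbb{Q}:\exists q>p,\ b_{p,q}(X)=1\}$ is recovered continuously from the family $\langle b_{p,q}(X)\rangle_{p<q}$. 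Batching the countably many queries (one per rational pair $p<q$) into a single application of $\widehat{g}$ produces continuous $\Phi,\Psi$ with $f=\Phi\circ\widehat{g}\circ\Psi$, i.e.\ $f\leq_{sW}^c\widehat{g}$; alternatively one first gets $f\leq_W^c\widehat{g}$ and removes the side input by Lemma \ref{lem:remove-strong}. The main obstacle is the structural step: one must see that the successor self-dual degree $\Delta_{\alpha+1}$ splits as a $\si$-join of $\Sigma_\alpha/\Pi_\alpha$ pieces and, crucially, that the $\Pi_\alpha$ pieces remain obtainable from $\sep{g}$ — which fails for plain Wadge reducibility and is precisely what parallelization, through reading both sides of the real value $g(y)$, repairs.
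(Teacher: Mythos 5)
Your proof is correct, but it is organized differently from the paper's. The paper stays at the level of real-valued functions: it defines $-g$ (so that $\sep{-g}$ is $\Pi^\pthree_\alpha$-complete), glues $g$ and $-g$ into a single function $h$ with $h(0X)=g(X)$ and $h(1X)=-g(X)$, observes that $\sep{h}$ is $\Delta^\pthree_{\alpha+1}$-complete, and then gets $f\leq_{sW}^c\widehat{h}$ by a black-box application of Lemma \ref{lem:wadge-implies-pcsW}; all that remains is the two-query argument $h\leq_{sW}^c\widehat{g}$ (one query decodes the first bit of the input via two points with distinct $g$-values, the other supplies $\pm g(X)$). You instead work at the level of sets: you invoke the heavier structural machinery (Facts \ref{fact:complete-cofinality}, \ref{fact:sjr-equal-sd} and \ref{fact:sji-deg-pres}) to split a $\Delta_{\alpha+1}$-complete set into a countable join $\bigoplus_n C_n$ of $\si$-join-irreducible pieces lying in $\Sigma_\alpha\cup\Pi_\alpha$, answer each piece by one query to $g$ or $-g$, and then re-derive the assembly step of Lemma \ref{lem:wadge-implies-pcsW} by hand. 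Both proofs turn on the same mechanism --- parallelization gives simultaneous access to $g$ and $-g$, i.e.\ to both sides of the non-self-dual pair, which is exactly what $\Delta_{\alpha+1}$ requires --- but the paper's factorization through $h$ makes the successor step a two-piece join and avoids redoing the assembly, while your decomposition is more general than needed at a successor level (two pieces suffice) at the cost of quoting Steel--van Wesep-type structure theory.

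Two minor points to tighten. First, a continuous reduction into $[\mathbb{Q}]^2\times\om^\om$ only has \emph{locally constant} rational thresholds, so your fixed $r_n<s_n$ should be $r_n(Z),s_n(Z)$; this is harmless but should be stated. Second, and relatedly, the outer map of a \emph{strong} reduction cannot see those thresholds (it only sees $\widehat{g}$'s output), so the route through $f\leq_W^c\widehat{g}$ followed by Lemma \ref{lem:remove-strong} is not an optional alternative as you phrase it --- it is the step that actually makes the bits $b_{p,q}(X)$ and the supremum formula computable by a continuous outer function; this is also how the paper's own Lemma \ref{lem:wadge-implies-pcsW} proceeds.
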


\begin{proof}
Given $g\colon \om^\om\to\mathbb{R}$, define $-g$ by $(-g)(X)=-g(X)$.
Note that $\sep{-g}$ is $\Pi^\pthree_\alpha$-complete whenever $\sep{g}$ is $\Sigma^\pthree_\alpha$-complete.
Define $h$ by $h(0X)=g(X)$ and $h(1X)=-g(X)$.
Then, $\sep{h}$ is $\Delta^\pthree_{\alpha+1}$-complete, and therefore, if $\sep{f}\in\Delta^\pthree_{\alpha+1}$, then $\sep{f}\leq_wS_h$.
Thus, by Lemma \ref{lem:wadge-implies-pcsW}, we have $f\leq_{sW}^c\widehat{h}$.
Therefore, it suffices to show that $h\leq_{sW}^c\widehat{g}$.
Since $\alpha>0$, $g$ cannot be constant, that is, there are $Z_0,Z_1$ such that $g(Z_0)\not=g(Z_1)$.
Let $U_0,U_1$ be disjoint open sets such that $g(Z_i)\subseteq U_i$ for each $i<2$.
Define $\Psi(0iX)=Z_i$, and $\Psi(1iX)=X$.
Then define $\Phi(z\oplus y)=y$ if $z\in U_0$, and $\Phi(z\oplus y)=-y$ if we find that $z\in U_1$.
Then, we get 
\[h(iX)=\Phi(g(\Psi(0iX))\oplus g(\Psi(1iX))).\]
Consequently, $h\leq_{sW}^c\widehat{g}$ as desired.
\end{proof}

If $\alpha$ is a limit ordinal of countable cofinality, there is a sequence $\beta_n<\alpha$ such that $\alpha=\sup_n\beta_n$.
Let $J_{\beta_n}$ be a UOP function corresponding to the reasonable pointclass $\Sigma_{\beta_n}$.
Then, as in Becker \cite{Becker88}, define $J_\alpha$ as follows:
\[J_\alpha(X)=\bigoplus_{n\in\om} J_{\beta_n}(X).\]

Becker \cite{Becker88} showed that $J_\alpha$ is a UOP function on a u.p.p.\ tree, that is, there is a u.p.p\ tree $T$ such that $J_\alpha^\ast:=J_\alpha\circ T$ is UOP.

\begin{lemma}
For any limit ordinal $\alpha$ of countable cofinality, if $\sep{f}$ is $\Delta_\alpha^\pthree$-complete, then $f\equiv_{sW}^{\widehat{c}}J_\alpha^\ast$.
\end{lemma}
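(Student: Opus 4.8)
The plan is to exploit the fact, recorded in Lemma \ref{lem:wadge-implies-pcsW}, that $\pcsW$-reducibility collapses the $m$-$\sigma$-jr/$m$-$\sigma$-ji splitting of a selfdual Wadge degree, so that it suffices to treat one convenient $\Delta_\alpha^\pthree$-complete representative. Fix $\beta_n<\alpha$ with $\alpha=\sup_n\beta_n$, reasonable pointclasses $\Sigma_{\beta_n}$ with indexings $U_{\beta_n}$, so that $J_\alpha=\bigoplus_nJ_{\beta_n}$ and $J_\alpha^\ast=J_\alpha\circ T$ for the u.p.p.\ tree $T$ supplied by Becker. Since $\sep{f}$ is $\Delta_\alpha^\pthree$-complete it lies in the unique selfdual Wadge degree of the $\Delta_\alpha^\pthree$-complete functions, hence is Wadge equivalent to the $m$-$\sigma$-jr representative produced in Lemma \ref{lem:non-proper-Wadge-for-reals}(2). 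As every function in sight is non-constant, two applications of Lemma \ref{lem:wadge-implies-pcsW} give that $f$ is $\pcsW$-equivalent to that representative, so I may assume that $\sep{f}$ is itself $m$-$\sigma$-jr, i.e.\ $\sep{f}\upto\langle p,q\rangle<_w\sep{f}$ for every $p<q$.

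For $f\leq\pcsW J_\alpha^\ast$ I would argue as follows. Because $\sep{f}$ is $m$-$\sigma$-jr, each $\sep{f}\upto\langle p,q\rangle$ has Wadge rank $<\alpha$, and since $\alpha$ is the supremum of the $\beta_n$ it therefore lies in $\Sigma_{\beta_{n(p,q)}}^\pthree$ for a suitable index $n(p,q)$, whence it $\three$-Wadge reduces to $U_{\beta_{n(p,q)}}$. Exactly as in the reasonable-pointclass lemma this yields continuous $\tau_{pq},\Psi_{pq}$ with $\sep{f}(\langle p,q\rangle\fr X)\leq_\three J_{\beta_{n(p,q)}}(\Psi_{pq}(X))(\tau_{pq}(X))$. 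The one subtlety is that the oracle $\widehat{J_\alpha^\ast}$ returns only values of the form $J_\alpha(T[W])$, whereas I need $J_{\beta_{n(p,q)}}(\Psi_{pq}(X))$. I would resolve this by querying at $W=\Psi_{pq}(X)$ and extracting the $n(p,q)$-th component $J_{\beta_{n(p,q)}}(T[\Psi_{pq}(X)])$: since $T$ is uniformly pointed, $\Psi_{pq}(X)\leq_T T[\Psi_{pq}(X)]$ holds via an index independent of $X$, and since $J_{\beta_{n(p,q)}}$ is UOP that index is transformed into one along which $J_{\beta_{n(p,q)}}(\Psi_{pq}(X))\leq_T J_{\beta_{n(p,q)}}(T[\Psi_{pq}(X)])$; the resulting fixed Turing functional recovers $J_{\beta_{n(p,q)}}(\Psi_{pq}(X))$. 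Assembling these computations continuously over all pairs $p<q$ yields a continuous $\Phi$ with $f(X)=\Phi\bigl(\bigoplus_{p,q}J_\alpha^\ast(\Psi_{pq}(X))\bigr)$, that is, $f\leq^c_{sW}\widehat{J_\alpha^\ast}$.

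For the reverse reduction $J_\alpha^\ast\leq\pcsW f$ I would reduce each factor separately. As $\Sigma_{\beta_n}^\pthree\subseteq\Delta_\alpha^\pthree=\Gamma_{\sep f}$, the indexing $U_{\beta_n}$ $\three$-Wadge reduces to $\sep{f}$, so every bit $J_{\beta_n}(Y)(\langle m,k\rangle)=U_{\beta_n}(m,k,Y)$ is read off, through a continuous map, from a single value $\sep{f}(\langle p,q\rangle\fr\,\cdot\,)$ and hence from one value of $f$; thus $J_{\beta_n}\leq^c_{sW}\widehat{f}$. Precomposing with the continuous $T[\cdot]$ and taking the countable join, which is absorbed by parallelization, gives $J_\alpha^\ast=\bigoplus_n(J_{\beta_n}\circ T)\leq^c_{sW}\widehat{f}$, completing $f\equiv\pcsW J_\alpha^\ast$.

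I expect the main obstacle to be the bookkeeping at the tree $T$: the chosen representative must be the UOP function $J_\alpha\circ T$, yet the natural reductions speak about the $J_{\beta_n}$ directly, so one must pass back and forth between $J_{\beta_n}(\cdot)$ and $J_{\beta_n}(T[\cdot])$. The uniform pointedness of $T$ together with the UOP property of each $J_{\beta_n}$ is exactly what turns this passage into a fixed, input-independent Turing functional (the same device underlying Lemma \ref{lem:UOP-parallelizable}), and verifying that every reduction stays continuous and uniform in the pair $(p,q)$ is the part requiring the most care.
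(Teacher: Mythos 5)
Your proof is correct and is essentially the paper's intended argument: the paper's own proof of this lemma is just the word ``Straightforward,'' and what you write is the natural adaptation of its reasonable-pointclass lemma ($f\equiv\pcsW J_\Gamma$ for nonselfdual $\Gamma$) to the countable join $J_\alpha=\bigoplus_n J_{\beta_n}$, with the u.p.p./UOP device correctly supplying the needed input-independent passage between $J_{\beta_n}(\Psi_{pq}(X))$ and $J_{\beta_n}(T[\Psi_{pq}(X)])$. The only point you gloss---the outer continuous map $\Phi$ also needs the bit positions $\tau_{pq}(X)$, hence access to $X$---is the same gloss the paper itself makes in that lemma, and it is repaired by Lemma \ref{lem:remove-strong} (recovering $X$ from extra queries, legitimate since $J_\alpha^\ast$ and $f$ are non-constant), exactly the mechanism you already invoke through Lemma \ref{lem:wadge-implies-pcsW}.
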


\begin{proof}
Straightforward.
\end{proof}

This concludes the proof of Theorem \ref{thm:pcsW-equal-Martin}.
We finally show Theorem \ref{thm:pcsW-equal-Martin2} saying that the jump of the parallelization always gives the successor rank.

\begin{proof}[Proof of Theorem \ref{thm:pcsW-equal-Martin2}]
As mentioned before, the $\leq^\cone_\mathbf{T}$-degrees of UOP functions form a well-order of height $\Theta$, and therefore, by Theorem \ref{thm:pcsW-equal-Martin}, so are the p.c.s.~Weihrauch degrees.

We claim that for non-constant functions $f,g\colon 2^\om\to 2^\om$, if $f\leq\pcsW g$ then $f'\leq_{sW}^c \widehat{g}'$.
First note that the Turing jump $X\mapsto X'$ is UOP, so let $u$ be a witness of UOP-ness of the Turing jump, that is, if $X\leq_TY$ via $e$ then $X'\leq_TY'$ via $u(e)$.
If $f\leq\pcsW g$ then there are continuous functions $h$ and $k$ such that $f=k\circ\widehat{g}\circ h$.
Put $B_X=\widehat{g}(h(X))$ for any $X\in 2^\om$.
As $k$ is continuous, there is an oracle $C$ such that $k$ is $C$-computable.
Hence, $f(X)\leq_T B_X\oplus C$ via some index $e$ independent of $X$.
As in the proof of Lemma \ref{lem:remove-strong}, since $g$ is not constant, one can find $\langle Z_n^C:n\in\om\rangle$ such that $\widehat{g}(\langle Z_n^C:n\in\om\rangle)$ computes $C$.
Assume that $h(X)=\langle h_n(X):n\in\om\rangle$.
Then define $r_{2n}(X)=h_n(X)$, $r_{2n+1}(X)=Z_n^C$, and $r(X)=\langle r_n(X):n\in\om\rangle$.
Since $B_X\oplus C\leq_T\widehat{g}(r(X))$ via some index $d$ (independent of $X$), $(B_X\oplus C)'\leq_T\widehat{g}(r(X))'$ via $u(d)$.
Moreover, since $f(X)\leq_TB_X\oplus C$ via $e$, we have $f(X)'\leq_T(B_X\oplus C)'$ via $u(e)$.
Hence, there is an index $c$ independent of $X$ such that $f(X)'\leq_T\widehat{g}(r(X))'$ via $c$.
In other words, the pair $(r,\Phi_c)$ witnesses $f'\leq_{sW}^c\widehat{g}'$.

Let $f$ be a non-constant function of p.c.s.~Weihrauch rank $\alpha$.
By Theorem \ref{thm:pcsW-equal-Martin}, there is a UOP function $g$ such that $f\equiv\pcsW g$, and the $\leq^\cone_\mathbf{T}$-rank of $g$ is also $\alpha$.
It is easy to see that $\widehat{g}$ and $\widehat{g}'$ are also UOP, and clearly $\widehat{g}\equiv\pcsW g$.
Hence, by Theorem \ref{thm:pcsW-equal-Martin}, the $\leq^\cone_\mathbf{T}$-rank of $\widehat{g}$ is still $\alpha$.
Then, by Steel's theorem \cite{Steel82}, the $\leq^\cone_\mathbf{T}$-rank of $\widehat{g}'$ is $\alpha+1$, and again by Theorem \ref{thm:pcsW-equal-Martin}, so is the p.c.s.~Weihrauch rank.
By the above claim, we have $\widehat{f}'\equiv\pcsW\widehat{g}'$.
Consequently, the p.c.s.~Weihrauch rank of $\widehat{f}'$ is $\alpha+1$
\end{proof}

The claim in the above proof also shows that if $f$ is non-constant and parallelizable, so is $f'$:
By parallelizability of $f$, we have $\widehat{f}\leq_{sW}^c f$, and by the above claim, we also have $\widehat{f}'\leq_{sW}^c f'$.
For $A_n=f(X_n)$, as the Turing jump is UOP, $\bigoplus_{n\in\om}A_n'$ is computable in $(\bigoplus_{n\in\om}A_n)'$ in a uniform manner; hence $\widehat{(f')}\leq_{sW}^c \widehat{f}'$.
Therefore, $f'$ is parallelizable.

As a consequence, if $f$ is non-constant and parallelizable, then one can obtain an $\om_1$-sequence of p.c.s.~Weihrauch successor ranks of $f$ only by iterating the jump $g\mapsto g'$.

\subsection*{Acknowledgements}
The author was partially supported by JSPS KAKENHI, Grant-in-Aid for Research Activity Start-up, Grant Number 17H06738.
The author also thanks JSPS Core-to-Core Program (A. Advanced Research Networks) for supporting the research.
The author would like to thank Adam Day, Rod Downey, Antonio Montalb\'an, and Linda Brown Westrick for valuable discussions.

\bibliographystyle{plain}
\bibliography{topological_reducibilities}

\end{document}